\setlist[itemize]{noitemsep, topsep=1pt, leftmargin=20pt}
\newcommand\bcdot{\ensuremath{
  \mathchoice
   {\mskip\thinmuskip\lower0.2ex\hbox{\scalebox{1.6}{$\cdot$}}\mskip\thinmuskip}}
   {\mskip\thinmuskip\lower0.2ex\hbox{\scalebox{1.6}{$\cdot$}}\mskip\thinmuskip}
   {\lower0.3ex\hbox{\scalebox{1.2}{$\cdot$}}}
   {\lower0.3ex\hbox{\scalebox{1.2}{$\cdot$}}}
}
\theoremstyle{plain}
\newtheorem{theo}{Theorem}[section]
\newtheorem{prop}[theo]{Proposition}
\newtheorem{cor}[theo]{Corollary}
\theoremstyle{definition}
\newtheorem{rem}[theo]{Remark}
\newtheorem{example}[theo]{Example}
\newtheorem{definition}[theo]{Definition}
\theoremstyle{plain}
\newtheorem{lemma}[theo]{Lemma}
\newtheorem{theorem}[theo]{Theorem}
\newtheorem{corollary}[theo]{Corollary}
\theoremstyle{definition}
\newtheorem{remark}[theo]{Remark}
\theoremstyle{plain}
\newtheorem{thmint}{Theorem}
\renewcommand{\=}{\coloneqq}
\renewcommand{\a}{\alpha}
\renewcommand{\b}{\beta}
\newcommand{\e}{\varepsilon}
\newcommand{\f}{\varphi}
\newcommand{\g}{\gamma}
\renewcommand{\l}{\lambda}
\newcommand{\w}{\omega}
\newcommand{\q}{\vartheta}
\renewcommand{\t}{\tau}
\newcommand{\D}{\Delta}
\newcommand{\G}{\Gamma}
\renewcommand{\S}{\Sigma}
\newcommand{\W}{\Omega}
\DeclareSymbolFontAlphabet{\mathbb}{AMSb}
\DeclareSymbolFontAlphabet{\mathbbl}{bbold}
\newcommand{\bC}{\mathbb{C}}
\newcommand{\bR}{\mathbb{R}}
\newcommand{\bZ}{\mathbb{Z}}
\newcommand{\bJ}{\mathbb{J}}
\newcommand{\bN}{\mathbb{N}}
\newcommand{\bP}{\mathbb{P}}
\newcommand{\bg}{\mathbbl{g}}
\newcommand{\fE}{\mathsf{E}}
\newcommand{\fG}{\mathsf{G}}
\newcommand{\fH}{\mathsf{H}}
\newcommand{\fK}{\mathsf{K}}
\newcommand{\fL}{\mathsf{L}}
\newcommand{\fN}{\mathsf{N}}
\newcommand{\fS}{\mathsf{S}}
\newcommand{\fU}{\mathsf{U}}
\newcommand{\fZ}{\mathsf{Z}}
\newcommand{\fSO}{\mathsf{SO}}
\newcommand{\fSU}{\mathsf{SU}}
\newcommand{\fSp}{\mathsf{Sp}}
\newcommand{\fSpin}{\mathsf{Spin}}
\newcommand{\ga}{\mathfrak{a}}
\renewcommand{\gg}{\mathfrak{g}}
\newcommand{\gh}{\mathfrak{h}}
\newcommand{\gk}{\mathfrak{k}}
\newcommand{\gm}{\mathfrak{m}}
\newcommand{\gp}{\mathfrak{p}}
\newcommand{\gz}{\mathfrak{z}}
\newcommand{\su}{\mathfrak{su}}
\newcommand\Sym{\mathrm{Sym}}
\newcommand{\cC}{\mathcal{C}}
\newcommand{\cO}{\mathcal{O}}
\newcommand{\cU}{\mathcal{U}}
\newcommand{\eL}{\EuScript{L}}
\newcommand{\eS}{\EuScript{S}}
\newcommand{\p}{\partial}
\DeclareMathOperator\Tr{Tr}
\DeclareMathOperator\Lie{Lie}
\DeclareMathOperator\End{End} 
\DeclareMathOperator\scal{scal} 
\DeclareMathOperator\Ad{Ad}
\DeclareMathOperator\ad{ad}
\DeclareMathOperator\Id{Id}
\DeclareMathOperator\diff{d\!}
\newcommand\Ric{\operatorname{Ric}}
\newcommand{\ol}{\overline}
\newcommand{\zero}{\operatorname{o}}
\newcommand{\n}{\nabla}
\title[On cohomogeneity one non-K\"ahler metrics]{On cohomogeneity one Hermitian non-K\"ahler metrics}
\author{Daniele Angella}
\address[Daniele Angella]{Dipartimento di Matematica e Informatica ``Ulisse Dini''\\
Universit\`a degli Studi di Firenze,
viale Morgagni 67/a,
	50134 Firenze, Italy}
\email{daniele.angella@unifi.it}
\email{daniele.angella@gmail.com}
\author{Francesco Pediconi}
\address[Francesco Pediconi]{Dipartimento di Matematica e Informatica ``Ulisse Dini''\\
	Universit\`a degli Studi di Firenze,
	viale Morgagni 67/a,
	50134 Firenze, Italy}
\email{francesco.pediconi@unifi.it}
\subjclass[2020]{53C30, 53C21, 53C25, 53C55}
\keywords{Hermitian manifold, cohomogeneity one, Chern connection, B\'erard-Bergery, non-K\"ahler, K\"ahler-Einstein, Hermite-Einstein}
\thanks{
The authors are supported by project PRIN2017 ``Real and Complex Manifolds: Topology, Geometry and holomorphic dynamics'' (code 2017JZ2SW5), and by GNSAGA of INdAM
}
\begin{document}
\begin{abstract}
We investigate the geometry of Hermitian manifolds endowed with a compact Lie group action by holomorphic isometries with principal orbits of codimension one. In particular, we focus on a special class of these manifolds constructed by following B\'erard-Bergery which includes, among the others, the holomorphic line bundles on $\bC\bP^{m-1}$, the linear Hopf manifolds and the Hirzebruch surfaces. We characterize their invariant special Hermitian metrics, such as balanced, K\"ahler-like, pluriclosed, locally conformally K\"ahler, Vaisman, Gauduchon. Furthermore, we construct new examples of cohomogeneity one Hermitian metrics solving the second-Chern-Einstein equation and the constant Chern-scalar curvature equation.
\end{abstract}

\maketitle

\section{Introduction}

One of the most useful ways to construct concrete examples of Einstein metrics is by considering Riemannian manifolds with a large symmetry group, for example, homogeneous spaces and manifolds of cohomogeneity one, see {\it e.g.} \cite{besse, wang-1, wang-2} and references therein. As another useful tool, the Calabi-Yau theorem assures the existence of Einstein metrics on compact complex K\"ahler manifolds with non-positive first Chern class \cite{aubin, yau}, and also the existence of K\"ahler-Einstein metrics on Fano manifolds has been recently understood.

The first non-homogeneous example of compact Riemannian Einstein manifold with positive scalar curvature has been provided by Page \cite{page, page-pope} on $\bC\bP^2 \# \ol{\bC\bP}{}^2$, and then generalized by B\'erard-Bergery in \cite{berard-bergery} as follows. Let $P$ be a compact K\"ahler-Einstein manifold with positive scalar curvature \cite{alekseevsky-perelomov, koiso-sakane-1, koiso-sakane-2}, for example, $P=\mathbb C \bP^1$ in case of the Page example. Let the first Chern class be $c_1(P)=p\alpha$, with $p>0$ integer and $\alpha\in H^2(P;\mathbb Z)$ indivisible. For $n>0$ integer, consider the line bundle $\mathbb C \to E_n\to P$ with $c_1(E_n)=n\alpha$, and the associated projective bundle $\mathbb C \bP^1 \to M_n\to P$. Then one can use the theory of Riemannian submersion \cite{oneill} to compute the Riemannian curvature of both $E_n$ and $M_n$. In particular, when $P=\fG/\fK$ is a Hermitian symmetric space, then $E_n$ and $M_n$ are cohomogeneity one, and by consequence their curvature is determined by the structure constants of the Lie algebra $\gg\=\Lie(\fG)$ and by the induced one-parameter family of scalar products \cite{berard-bergery}. The Einstein equation is then reduced to a system of second-order ODEs, in both the spaces $E_n$ and $M_n$, that can be integrated to get Einstein metrics which are also Hermitian, see \cite[Th\'eor\`eme 1.10 and Th\'eor\`eme 1.13]{berard-bergery}. \smallskip

In the complex setting, K\"ahler C-spaces ({\itshape i.e.} compact, simply-connected, homogeneous K\"ahler manifolds) are well understood \cite{wang, alekseevsky-perelomov, bordermann-forger-romer}: in particular, they always admit a unique K\"ahler-Einstein metric, up to scaling. On the other hand, cohomogeneity one K\"ahler metrics have been studied in \cite{sakane, koiso-sakane-2, dancer-wang, podesta-spiro, guan-chen, podesta-spiro-2, alekseevsky-zuddas-1, alekseevsky-zuddas-2}.

In the Hermitian non-K\"ahlerian context, since the Levi-Civita connection does not preserve the complex structure, one is led to introduce a more suitable notion of canonical metrics, for example, the {\em second-Chern-Einstein} condition.
Here, by second-Chern-Einstein metric on a complex manifold, we mean a Hermitian metric on the tangent bundle that is Hermite-Einstein with respect to itself \cite{gauduchon}, see also \cite{streets-tian, angella-calamai-spotti-2}.
Examples of compact second-Chern-Einstein manifolds include the homogeneous Hopf surface \cite{gauduchon, gauduchon-ivanov, liu-yang}, holomorphically-parallelizable manifolds \cite{boothby}, and the homogeneous non-K\"ahler C-spaces studied by Podest\`a \cite{podesta}; see also \cite{alekseevsky-podesta} for the almost-K\"ahler case.
In particular, the only compact complex non-K\"ahler surface admitting second-Chern-Einstein metrics is the Hopf surface \cite[Theorem 2]{gauduchon-ivanov}, see also \cite{gauduchon}.
Note that there still miss (if any) non-K\"ahler examples of second-Chern-Einstein metrics with negative Chern-scalar curvature on compact complex manifolds. Further problems, {\itshape e.g.} the {\em constant Chern-scalar curvature problem}, also known as Chern-Yamabe problem \cite{angella-calamai-spotti}, are still not completely understood. Non-homogeneous examples of Hermitian metrics of positive constant Chern-scalar curvature on Hirzebruch surfaces, using the Page and B\'erard-Bergery ansatz, have been constructed in \cite{koca-lejmi}. \smallskip

In this note, motivated by the above questions, we start to investigate the curvature and the properties of Hermitian non-K\"ahler manifolds with large isometry groups. In particular, we focus on the {\em B\'erard-Bergery} \cite{berard-bergery} {\em standard} \cite{podesta-spiro} {\em cohomogeneity one Hermitian manifolds} $(M_{(i,n)}(\fG,\fK),\bJ,\bg)$, with $i \in \{1,2,3,4\}$ and $n \in \bN$, that are defined as (the total spaces of) bundles over a simply connected, irreducible, compact Hermitian symmetric space $P=\fG/\fK$, see Definition \ref{def:bb}. We compute the Chern-Ricci and Chern-scalar curvatures, see Proposition \ref{prop:chern-ricci}.
This is aimed at getting the second-Chern-Einstein and the constant Chern-scalar curvature equations, that we investigate in the last section.
Here, we continue by describing the existence of {\em special} metrics. \smallskip

Our first result concerns the locally conformally K\"ahler condition. We recall that a Hermitian metric $\bg$ is called {\em locally conformally K\"ahler} if it admits a local conformal change to a K\"ahler metric \cite{dragomir-ornea}, which is equivalent to say that $\diff\omega=\frac{1}{m-1}\q\wedge\omega$ with $\diff\q=0$, where $\omega=\bg(\bJ \cdot, \cdot)$ is the fundamental $2$-form associated to the metric, $m$ denotes the complex dimension and $\q$ is the {\em Lee form} of $\bg$ \cite{gauduchon-2}, see Section \ref{sec:curvature-bb}.
In the particular case when $\q$ is also exact, say $\q=(m-1)\diff\phi$, then $\exp(-\phi)\omega$ is K\"ahler, and $\bg$ is called {\em globally conformally K\"ahler} (this includes the case $\q=0$, corresponding to $\bg$ itself being K\"ahler).
When $\q$ is not exact, we say that the metric is {\em strictly locally conformally K\"ahler}.
Moreover, a strictly locally conformally K\"ahler metric is called {\em Vaisman} if the Lee form $\q$ is parallel with respect to the Levi-Civita connection.

It is worth noticing that homogeneous, strictly locally conformally K\"ahler metrics on compact manifolds are Vaisman \cite{gauduchon-moroianu-ornea, hasegawa-kamishima}. On the other hand, in the cohomogeneity one case, we prove the following

\begin{thmint}[see Proposition \ref{prop:LCK}, Corollary \ref{globCK} and Proposition \ref{prop:vaisman}]\label{thm:a}
Let $(M_{(i,n)}(\fG,\fK), \bJ, \bg)$ be one of the B\'erard-Bergery standard cohomogeneity one Hermitian manifolds. Then, $\bg$ is locally conformally K\"ahler. Furthermore, $\bg$ is strictly locally conformally K\"ahler if and only if $(M_{(i,n)}(\fG,\fK), \bJ)$ is compact without singular orbits (case $i=3$) and, on such manifolds, $\bg$ is Vaisman if and only if it is homogeneous.
\end{thmint}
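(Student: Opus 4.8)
The plan is to work throughout with the explicit cohomogeneity one normal form of $(\bJ,\bg)$ from Definition~\ref{def:bb}, see also Section~\ref{sec:curvature-bb}. In that description the principal orbit is a circle bundle $\pi\colon\fG/\fH\to P=\fG/\fK$; writing $t$ for the orbit parameter, $\eta$ for the ($\fG$-invariant) connection $1$-form of this bundle and $\f,\psi$ for the two positive profile functions, one has $\bg=\diff t^{2}+\f(t)^{2}\,\eta\otimes\eta+\psi(t)^{2}\,\pi^{*}g_P$, with $\bJ\p_t$ proportional to the fundamental field of the circle and $\bJ$ restricting to the Kähler structure of $P$ on the horizontal distribution. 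Hence the fundamental form is $\omega=\f\,\diff t\wedge\eta+\psi^{2}\,\pi^{*}\omega_P$. Since $P$ is irreducible Hermitian symmetric, $H^{2}(P;\bR)$ is one-dimensional, so the curvature of $\eta$ is $\diff\eta=c\,\pi^{*}\omega_P$ for a constant $c$ proportional to the twisting integer $n$ (nonzero in the compact case without singular orbits). Using $\diff\omega_P=0$ one gets $\diff\omega=(2\psi\psi'-c\f)\,\diff t\wedge\pi^{*}\omega_P$, and setting $\q\=(m-1)\dfrac{2\psi\psi'-c\f}{\psi^{2}}\,\diff t=:a(t)\,\diff t$ a direct check gives $\diff\omega=\frac{1}{m-1}\q\wedge\omega$ and $\diff\q=0$. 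So $\bg$ is locally conformally Kähler with Lee form $\q$, which is Proposition~\ref{prop:LCK}.

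Next I would decide when $\q=a(t)\,\diff t$ is exact. For $i\in\{1,2,4\}$ the orbit space is an interval, so $\q=\diff F$ with $F(t)=\int a$; near a singular orbit one has $\f\to0$ with $\f$ odd and $\psi$ even in the signed distance, so $a$ is odd and $F$ is even, hence $F$ extends to a smooth function on all of $M$. Thus $\bg$ is globally conformally Kähler (Corollary~\ref{globCK}) and, in particular, not strictly locally conformally Kähler. For $i=3$ the orbit space is a circle $\bR/\ell\bZ$ and $t$ is periodic; as $\q$ has no component along the orbits, $[\q]=0$ in $H^{1}(M;\bR)$ iff $\oint_0^{\ell}a\,\diff t=0$. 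But $\oint_0^{\ell}\frac{\psi'}{\psi}\,\diff t=0$ by periodicity, so $\oint_0^{\ell}a\,\diff t=-(m-1)\,c\oint_0^{\ell}\frac{\f}{\psi^{2}}\,\diff t\neq0$, because $c\neq0$ and the integrand is positive. Hence $\q$ is not exact, $\bg$ is strictly locally conformally Kähler, and strictness holds exactly when $i=3$.

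Finally, on the manifolds with $i=3$ I would compute $\n\q$ in the cohomogeneity one frame. One finds $(\n_{\p_t}\q)(\p_t)=a'$, that all mixed components vanish, and that $(\n_X\q)(Y)=-a\,\mathrm{II}(X,Y)$ for $X,Y$ tangent to the principal orbit, where $\mathrm{II}$ is the scalar second fundamental form (the shape operator of the orbit is $-\tfrac12\,g_t^{-1}\p_t g_t$). Therefore $\q$ is parallel if and only if $a$ is constant and the principal orbits are totally geodesic, i.e.\ if and only if $\f$ and $\psi$ are both constant. On the other hand, within the family $\bg$ is homogeneous precisely when $\f,\psi$ are constant: then $M$ is the mapping torus of an isometry of the principal orbit, and $\fG$ together with the translations in $t$ acts transitively; a non-constant profile is obstructed (for instance $|\q|=|a|$ would then be non-constant, while it is constant for a homogeneous metric), and the implication ``homogeneous $\Rightarrow$ Vaisman'' can alternatively be quoted from \cite{gauduchon-moroianu-ornea, hasegawa-kamishima} once we know $\bg$ is compact and strictly locally conformally Kähler. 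Combining, $\bg$ is Vaisman iff it is homogeneous, which is Proposition~\ref{prop:vaisman}.

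The curvature bookkeeping of the first paragraph is routine, given Section~\ref{sec:curvature-bb}; the two delicate points are the following. For strictness, one must ensure that case $i=3$ genuinely carries the nontrivial twisting $c\neq0$ and that the profiles are strictly positive, so that the period $\oint a$ cannot vanish --- this uses the topology of $M_{(3,n)}$ and the structure of the Hermitian symmetric base. For the Vaisman statement, the delicate step is the equivalence ``$\f,\psi$ constant $\iff$ $\bg$ homogeneous within the family'': the implication from homogeneity relies on the rigidity of compact homogeneous strictly locally conformally Kähler metrics, while the converse requires producing enough isometries of the mapping torus.
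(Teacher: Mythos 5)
Your proposal is correct in substance but reaches two of the three components by a genuinely different route. For the locally conformally K\"ahler property you verify $\diff\omega=\tfrac{1}{m-1}\q\wedge\omega$ and $\diff\q=0$ directly in the circle--bundle normal form, using that $\diff\eta=c\,\pi^{*}\omega_P$ (legitimate: invariant $2$-forms on an irreducible compact Hermitian symmetric space form a line, $b_2(P)=1$); the paper instead extracts from its torsion computation (Proposition \ref{propLee}) the pointwise identity \eqref{eq:lck} and invokes the Weyl-connection characterization of \cite{dragomir-ornea}. Your version is more elementary; the paper's packages the same information in a form it reuses for the balanced/pluriclosed discussion. For strictness, you compute the period of $\q$ over the orbit circle and show it equals a nonzero multiple of $\oint \f/\psi^{2}>0$; the paper argues topologically instead ($i=2,4$ are simply connected, $b_1(M_{(1,n)})=0$, and $M_{(3,n)}$ carries no K\"ahler metric at all because $b_1(\S_n)=0$, so no metric on it can be globally conformally K\"ahler). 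Your period integral is a valid quantitative replacement and has the side benefit of showing $a\not\equiv 0$, which you need later to rule out the K\"ahler degenerate case in the parallelism analysis. The Vaisman computation ($D\q$ in the cohomogeneity one frame, matching Equation \eqref{LCtheta}) is essentially identical to the paper's.

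The one step that does not work as written is the parenthetical argument that a non-constant profile is ``obstructed'' because $|\q|=|a|$ would be non-constant: the function $a=2(m-1)\psi'/\psi-(m-1)c\,\f/\psi^{2}$ can perfectly well be constant for non-constant $(\f,\psi)$, so constancy of $|\q|$ does not yield ``homogeneous $\Rightarrow$ profiles constant''. The implication ``homogeneous $\Rightarrow$ Vaisman'' should be taken from \cite{gauduchon-moroianu-ornea, hasegawa-kamishima}, as you offer in the alternative (and as the paper implicitly does); combined with your chain ``Vaisman $\Rightarrow$ $D\q=0$ and $a\not\equiv0$ $\Rightarrow$ $\f,\psi$ constant $\Rightarrow$ $\fG\times\fU(1)$ acts transitively by holomorphic isometries'', this closes the equivalence, so the overall proof stands once that parenthesis is dropped.
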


Our second result concerns other kinds of special conditions for Hermitian metrics. To this regard, we recall that the {\em balanced} condition in the sense of Michelsohn \cite{michelsohn} amounts to $\diff{}^*\omega=0$. The {\em pluriclosed} condition, {\itshape a.k.a.} strong K\"ahler with torsion (SKT for short), means that $\diff\diff^{\,c}\omega=0$.
Moreover, the {\em Gauduchon} condition \cite{gauduchon-excentricite} refers to $\diff{}^*\q=0$, equivalently, $\diff\diff^{\,c}\omega^{m-1}=0$.

\begin{thmint}[see Corollary \ref{cor:balanced}, Corollary \ref{cor:skt} and Proposition \ref{prop:gauduchon}]\label{thm:b}
Let $(M_{(i,n)}(\fG,\fK), \bJ, \bg)$ be one of the B\'erard-Bergery standard cohomogeneity one Hermitian manifolds.
The following three conditions are equivalent: $\bg$ is pluriclosed, $\bg$ is balanced, $\bg$ is K\"ahler.
Moreover, if $(M_{(i,n)}(\fG,\fK),\bJ)$ has singular orbits, then $\bg$ is Gauduchon if and only if it is K\"ahler.
\end{thmint}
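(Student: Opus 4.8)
The plan is to reduce everything to the explicit ODE description of the B\'erard-Bergery ansatz and the curvature formulae computed earlier in the paper, and then treat the three equivalences by exhibiting the relevant differential forms in terms of the profile functions. Concretely, write the metric on the principal part $M \setminus (\text{singular orbits})$ as $\bg = \diff t^2 + f(t)^2 \bg_{\text{fib}} + \sum_j h_j(t)^2 \bg_j$ where $\bg_{\text{fib}}$ is the (possibly trivial) circle-direction part coming from the fibre of $E_n$ or $M_n$ and the $\bg_j$ are the pullbacks of the irreducible pieces of the Hermitian symmetric base $P=\fG/\fK$; then $\omega = \diff t \wedge \eta + f^2 \omega_{\text{fib}} + \sum_j h_j^2 \omega_j$ with $\eta$ the connection-type one-form and $\diff\eta$ a fixed invariant $2$-form pulled back from $P$. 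From here $\diff\omega$, $\diff\diff^{\,c}\omega$ and $\diff^*\omega$ are all computable as first-order expressions in $f, h_j$ and their $t$-derivatives together with structure constants. I expect the cleanest route is to first establish \emph{balanced $\iff$ K\"ahler}: compute $\diff^*\omega$ (equivalently $\diff(\omega^{m-1})$) and observe that the coefficient of the ``radial'' component forces a single ODE whose only solution compatible with the boundary behaviour at singular orbits (or with the closed-orbit topology in case $i=3$) is the one making $\diff\omega=0$; the point is that the balanced condition and the K\"ahler condition impose, up to a nonvanishing factor, the \emph{same} ODE on the profile functions, because in codimension one there is essentially only one independent torsion component.

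Next I would do \emph{pluriclosed $\iff$ K\"ahler}. Here I would compute $\diff\diff^{\,c}\omega$ directly; by invariance it is an invariant $4$-form on $M$, and decomposing along $\diff t$ and the orbit directions reduces $\diff\diff^{\,c}\omega=0$ to a small system of second-order ODEs in $f, h_j$. The key observation to extract is that one of these equations is, after using the Hermitian-symmetric-space identities (each $\bg_j$ is K\"ahler-Einstein on $P$, so $\diff\omega_j=0$ and $\diff\eta$ is a fixed combination of the $\omega_j$), of the form $\big(\text{positive quantity}\big)\cdot\big(\text{derivative of a torsion term}\big)=0$, whence the torsion term is constant; combined with its vanishing at a singular orbit (or its behaviour over the closed base when $i=3$) this forces it to vanish identically, i.e. $\bg$ is K\"ahler. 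Conversely K\"ahler trivially implies both balanced and pluriclosed, closing the cycle of equivalences. Throughout I would lean on Proposition \ref{prop:chern-ricci} and the surrounding computations to avoid recomputing connection coefficients from scratch, citing the structure constants of $\gg$ as already recorded.

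For the last sentence, assume $(M_{(i,n)}(\fG,\fK),\bJ)$ has singular orbits (cases $i\neq 3$), so $M$ is either the total space of a line bundle over $P$ or the associated $\bC\bP^1$-bundle $M_n$. The Gauduchon condition is $\diff\diff^{\,c}\omega^{m-1}=0$, equivalently $\diff^*\q=0$ for the Lee form $\q$. By Theorem \ref{thm:a} (already proved) the metric is locally conformally K\"ahler, so $\diff\omega=\tfrac{1}{m-1}\q\wedge\omega$ with $\diff\q=0$; since in cases $i\neq3$ there are no closed orbits, $H^1$ of the relevant space is controlled and I would argue $\q$ is in fact exact, $\q=(m-1)\diff\phi$, i.e. $\bg$ is globally conformally K\"ahler with $e^{-\phi}\omega$ K\"ahler. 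Then $\q$ is a gradient, $\diff^*\q = -\Delta\phi$ (sign according to the convention in the paper), so Gauduchon forces $\phi$ to be harmonic; writing $\phi=\phi(t)$ along the cohomogeneity-one direction, harmonicity is an ODE whose solutions are unbounded or fail the smoothness/closing conditions at the singular orbit unless $\phi$ is constant. Hence $\q=0$ and $\bg$ is K\"ahler; the converse is immediate since K\"ahler metrics are Gauduchon. The main obstacle I anticipate is the bookkeeping at the singular orbits: one must check that the profile functions $f, h_j$ satisfy the correct smoothness (``Taylor'') conditions there — e.g. $f(0)=0$, $f'(0)=1$ for a line-bundle collapse — and that these are exactly what rules out the non-constant solutions of the ODEs appearing in each of the three arguments; getting these boundary conditions right, uniformly across $i\in\{1,2,4\}$ and for all the irreducible factors of $P$, is the delicate point, whereas the differential-form computations themselves are routine given the earlier curvature formulae.
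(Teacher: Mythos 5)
Your plan for the balanced step and (with more work) the Gauduchon step is sound, but the pluriclosed step as you describe it has a genuine gap. You anticipate that $\diff\diff^{\,c}\w=0$ will reduce to an equation of the form $(\text{positive quantity})\cdot\frac{\diff}{\diff r}(\text{torsion term})=0$, so that the torsion term is \emph{constant}, and you then propose to kill that constant using the boundary behaviour at a singular orbit ``or its behaviour over the closed base when $i=3$''. That structure is wrong, and the fallback does not exist in the cases without singular orbits. The actual computation (Equation \eqref{ddcw}) gives
$$
\diff\diff^{\,c}\!\w(X^*,Y^*,Z^*,W^*)_{\g(r)} = 4\tfrac{mn}{p}\,f(r)\big(h(r)h'(r)+\tfrac{mn}{p}f(r)\big)(\rho\wedge\rho)(X,Y,Z,W)\,,
$$
so pluriclosed is the \emph{pointwise} algebraic condition $h h'+\tfrac{mn}{p}f\equiv 0$ (since $f>0$ on the regular part), which is verbatim the K\"ahler condition \eqref{eq:kahler} --- no integration, no constant of integration, no boundary analysis. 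This matters because the theorem asserts the equivalence for \emph{all} $i$, including $i=1$ (noncompact, no singular orbits, which your sketch does not address) and $i=3$ (compact with no singular orbits, where ``torsion term constant'' is perfectly consistent with a nonzero constant: the homogeneous metric $f\equiv p/(mn)$, $h\equiv 1$ has $hh'+\tfrac{mn}{p}f$ a nonzero constant, and it \emph{is} Gauduchon but not pluriclosed). Had the $\diff\diff^{\,c}\w$ computation come out in the first-derivative form you predict, the statement would in fact fail for $i=1,3$; so the pointwise nature of \eqref{ddcw} is the essential content of this step and cannot be replaced by your boundary-condition argument. The same remark applies, more mildly, to your balanced step: since the Lee form is $\q(N)\propto \tfrac{f}{h^2}\big(hh'+\tfrac{mn}{p}f\big)$ by Proposition \ref{propLee}, balanced ($\q=0$) is again pointwise equivalent to K\"ahler and needs no boundary input; your phrase ``the same ODE up to a nonvanishing factor'' is the right idea, but the appeal to singular-orbit behaviour is superfluous and would again leave $i=1,3$ uncovered if it were actually needed.

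For the Gauduchon assertion your route differs from the paper's: you pass through Theorem \ref{thm:a} to get global conformal K\"ahlerity on the simply connected models $i=2,4$, write $\q=(m-1)\diff\phi$, and argue that $\diff^{\,*}\q=0$ forces $\phi$ harmonic, hence constant. This works, but note that $M_{(2,n)}$ is noncompact, so ``harmonic implies constant'' must come from the radial ODE $\big(f h^{2(m-1)}\phi'\big)'=0$ together with $f(0)=0$, exactly the boundary bookkeeping you flag. The paper instead computes $\diff^{\,*}\q$ directly from \eqref{LCtheta} and integrates once to get $\big(hh'+\tfrac{mn}{p}f\big)f h^{2(m-2)}=k$ (Proposition \ref{prop:gauduchon}), with $k=0$ forced at the collapsing orbit; the two arguments are equivalent in substance, and the paper's version has the advantage of also yielding the exact characterization \eqref{eq:gauduchon} in the cases without singular orbits, where non-K\"ahler Gauduchon metrics do exist.
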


Finally, we investigate the second-Chern-Einstein and the constant Chern-scalar curvature problems (see Section \ref{sec:Chernprob}) on this class of manifolds. Firstly, we prove a local existence and uniqueness result for second-Chern-Einstein metrics with prescribed Chern-scalar curvature (see Theorem \ref{thm:loc}), by using a method due to Malgrange \cite{malgrange} already exploited by Eschenburg and Wang \cite{eschenburg-wang} and by B\"ohm \cite{boehm98,boehm99}. Then, concerning the existence of complete solutions to the second-Chern-Einstein equations, we prove the following

\begin{thmint}[see Remark \ref{rem:homCE}, Theorem \ref{thm:IICE2}, Proposition \ref{noCE4}]\label{thm:c}
Let $(M_{(i,n)}(\fG,\fK), \bJ)$ be one of the B\'erard-Bergery standard cohomogeneity one complex manifolds.
\begin{itemize}
\item[a)]
If $(M_{(i,n)}(\fG,\fK), \bJ)$ has no singular orbits (cases $i=1$ and $i=3$), then it admits homogeneous second-Chern-Einstein metrics.
\item[b)]
If $(M_{(i,n)}(\fG,\fK), \bJ)$ has one singular orbit (case $i=2$), then it admits a complete, non-K\"ahler, second-Chern-Einstein metrics of cohomogeneity one.
\item[c)]
If $(M_{(i,n)}(\fG,\fK), \bJ)$ has two singular orbits (case $i=4$), then it does not admit any cohomogeneity one, submersion-type metrics which are second-Chern-Einstein.
\end{itemize}
\end{thmint}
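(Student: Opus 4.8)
The plan is to turn the second-Chern-Einstein equation into a system of ODEs along the orbit space and then to treat the three cases according to the boundary behaviour dictated by the singular orbits. On the principal part $I\times Q$ -- with $Q$ the principal orbit, a circle bundle $\pi\colon Q\to P=\fG/\fK$, and $I\subseteq\bR$ the orbit parameter -- the metric has the standard submersion form $\bg=\diff t^2+a(t)^2\,\theta\otimes\theta+b(t)^2\,\pi^*\bg_P$, where $\theta$ is a connection form of $Q\to P$, $\bg_P$ is the symmetric metric of $P$, and $\bJ\p_t$ is proportional to the Reeb field of $\theta$. By Proposition \ref{prop:chern-ricci} the second-Chern-Ricci tensor $\Ric^{(2)}$ of $\bg$ is diagonal in the same frame as $\bg$, hence determined by its three components along $\p_t$, along $\bJ\p_t$, and along $\pi^*TP$, each an explicit expression in $(a,a',a'',b,b',b'')$ and in the data $(\dim_\bC P,\,p,\,n)$, where $c_1(P)=p\,\alpha$. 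Thus $\Ric^{(2)}=\lambda\,\bg$ reduces to three scalar second-order ODEs for $(a,b)$ together with the factor $\lambda$; after eliminating $\lambda$, two of them are independent.

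For part a) one takes $I=\bR$ (case $i=1$) or $I=\bR/L\bZ$ (case $i=3$): no collapsing condition is imposed, so one may look for constants $a\equiv a_0>0$, $b\equiv b_0>0$, and such a choice is automatically a homogeneous metric on $M_{(1,n)}(\fG,\fK)$ and $M_{(3,n)}(\fG,\fK)$. With $a'=b'=0$ the ODEs of Proposition \ref{prop:chern-ricci} degenerate to algebraic relations in $(a_0,b_0,\lambda)$; eliminating $\lambda$ leaves a single equation fixing the ratio $a_0^2/b_0^2$, and a short computation shows this ratio is strictly positive because $p,n>0$. Any such pair then produces a homogeneous second-Chern-Einstein metric.

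For part b) one takes $I=[0,+\infty)$ with the single singular orbit at $t=0$, which forces $a(0)=0$, $a'(0)>0$, $b(0)>0$, $b'(0)=0$. I would prescribe a convenient constant $\lambda$ (equivalently, a constant Chern-scalar curvature) and apply the local existence and uniqueness Theorem \ref{thm:loc} to obtain a smooth solution on a maximal interval $[0,T)$. The main work is then global: one establishes a priori bounds -- such as monotonicity of $b$ and at most linear growth of $a$ -- that keep $(a,b,a',b')$ bounded on every finite subinterval, so that $T=+\infty$; completeness is then automatic, since $t$ already exhausts $[0,+\infty)$ and the metric contains the summand $\diff t^2$. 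Non-K\"ahlerianity can be checked directly on the solution, or through Theorem \ref{thm:b} by verifying that it is neither pluriclosed nor balanced, or by noting that a K\"ahler member of this family would be K\"ahler-Einstein and hence satisfy a rigid relation between $a$ and $b$ that the solution violates.

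For part c) one takes $I=[0,L]$, the circle now collapsing at \emph{both} ends: $a(0)=a(L)=0$, $a'(0)>0>a'(L)$, $b'(0)=b'(L)=0$, and $a,b>0$ on $(0,L)$. Arguing by contradiction, the idea is to combine two of the block equations of Proposition \ref{prop:chern-ricci} so as to eliminate $\lambda$ and obtain a first-order identity $\tfrac{\diff}{\diff t}\Phi(a,b,a',b')=\Xi$ with $\Xi$ of a fixed sign on $(0,L)$ -- or, alternatively, an integral identity obtained by integrating a suitably weighted component equation over $[0,L]$ -- and then to evaluate it against the collapsing data at $t=0$ and $t=L$: the two boundary contributions turn out to be incompatible, which rules out existence. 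The main obstacle lies precisely here: singling out the combination of the curvature equations that produces a \emph{sharp} sign (the generic combination loses the required monotonicity) and checking that it is genuinely the presence of two singular orbits that obstructs, consistently with parts a) and b). In part b) the analogous difficulty is the global continuation step needed to promote the Malgrange-type local solution of Theorem \ref{thm:loc} to a complete one, in the spirit of the work of Eschenburg--Wang and of B\"ohm.
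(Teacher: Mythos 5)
Part (a) of your proposal is fine and coincides with the paper's Remark \ref{rem:homCE}: setting $f,h$ constant in \eqref{IICE} and eliminating $\l$ forces $f/h=p/(mn)$, which is exactly the homogeneous solution $f\equiv p/(mn)$, $h\equiv 1$. The problems are in parts (b) and (c), where in each case the step you yourself identify as ``the main work'' or ``the main obstacle'' is precisely the step you do not supply.

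For (b), you prescribe a \emph{constant} $\l$, invoke Theorem \ref{thm:loc} near the singular orbit, and then claim global existence from a priori bounds (``monotonicity of $b$, at most linear growth of $a$'') that are asserted but never derived; completeness is only ``automatic'' once the solution is known to live on all of $[0,+\infty)$, which is the whole issue. Moreover, by fixing $\l$ constant you have silently strengthened the problem to the \emph{strongly} second-Chern-Einstein one, whereas the metric the paper actually produces in Theorem \ref{thm:IICE2} has non-constant Chern-scalar curvature, so your route would be proving a different (and unestablished) statement. The paper avoids all of this: it eliminates $\l$ by equating the two equations in \eqref{IICE}, takes the ansatz $f_{\phi}=\tfrac{p}{2mn}\phi'$, $h_{\phi}=\sqrt{\phi}$ (which makes non-K\"ahlerianity automatic via \eqref{eq:kahler}, rather than ``checked on the solution''), and reduces to the \emph{linear} ODE $tu''-mu'+4m=0$ through $\phi'=\sqrt{u(\phi)}$; the explicit solution $u$ is positive and increasing on $[1,+\infty)$, so $\phi$ exists globally and no continuation argument is needed. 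For (c), you concede that the sign-definite combination of the curvature equations is missing, and without it there is no proof. The paper's argument is entirely different and soft: by Proposition \ref{prop:LCK} and Corollary \ref{globCK} every cohomogeneity one submersion-type metric on the simply connected $M_{(4,n)}(\fG,\fK)$ is \emph{globally} conformally K\"ahler; the second-Chern-Einstein condition is conformally invariant and, for K\"ahler metrics, coincides with Einstein (Remark \ref{rmk:conf-invariant}); hence such a metric would yield a K\"ahler--Einstein metric on $M_{(4,n)}(\fG,\fK)$, which is known not to exist. You should either carry out the missing integral identity or adopt this conformal argument; as written, parts (b) and (c) are plans rather than proofs.
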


Let us stress that the homogeneous second-Chern-Einstein metrics on $(M_{(1,n)}(\fG,\fK),\bJ)$ and $(M_{(3,n)}(\fG,\fK),\bJ)$, corresponding to case (a) in Theorem \ref{thm:c}, are clearly of constant Chern-scalar curvature. They include the classical example of the standard metric on the diagonal Hopf manifold, which corresponds in our notation to $M_{(3,1)}(\fG,\fK)$ with $\fG=\fSU(m)$ and $\fK=\fS(\fU(1)\times\fU(m-1))$. On the other hand, the metrics that we constructed on manifolds with singular orbits, corresponding to cases (b) and (c) in Theorem \ref{thm:c}, are {\it weakly} second-Chern-Einstein, namely, they do not have constant Chern-scalar curvature. Therefore, this brings us to investigate the constant Chern-scalar curvature problem for such manifolds. In this direction, we obtain

\begin{thmint}[see Remark \ref{rem:homCE}, Theorem \ref{thm:chcsc-M2}, Theorem \ref{thm:Chcsc-M4}]\label{thm:d}
All the B\'erard-Bergery standard cohomogeneity one complex manifolds $(M_{(i,n)}(\fG,\fK), \bJ)$ admit complete, non-K\"ahler metrics of cohomogeneity one with constant Chern-scalar curvature.
\end{thmint}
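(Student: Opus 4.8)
The plan is to dispose of the four families $i\in\{1,2,3,4\}$ in turn, reducing in the two cases with singular orbits the condition $\scal^{\mathrm{Ch}}(\bg)\equiv c$ to an ODE by means of Proposition \ref{prop:chern-ricci}, and then solving it together with the smoothness conditions at the singular orbits prescribed by Definition \ref{def:bb}. For $i\in\{1,3\}$ there are no singular orbits and I would simply invoke Remark \ref{rem:homCE}: $M_{(1,n)}(\fG,\fK)$ and $M_{(3,n)}(\fG,\fK)$ carry homogeneous invariant Hermitian metrics, which are automatically of constant Chern-scalar curvature and complete (compact, when $i=3$), and which are non-K\"ahler --- for $i=3$ because by Theorem \ref{thm:a} they are Vaisman, hence strictly locally conformally K\"ahler, and for $i=1$ by the computation $\diff\omega=-b_n\,\varphi\,\diff t\wedge\pi^*\omega_P\neq 0$ recorded below. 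Thus the whole issue lies in the two remaining cases.

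Assume then $i\in\{2,4\}$, so that $M_{(i,n)}(\fG,\fK)$ is the total space of a bundle over $P=\fG/\fK$ with fibre $\bC$ (one singular orbit, orbit interval $[0,\infty)$) or $\bC\bP^1$ (two singular orbits, orbit interval $[0,L]$, whence compactness). Writing a metric of the family as $\bg=\bg[\varphi,\psi]$, with $\varphi$ the profile along the collapsing circle direction and $\psi$ the profile along $P$, Proposition \ref{prop:chern-ricci} shows that, up to a fixed positive normalisation and with $\Phi\=\log\varphi+(m-1)\log\psi$,
\[
\scal^{\mathrm{Ch}}\big(\bg[\varphi,\psi]\big)\;=\;-\frac{1}{\varphi}\big(\varphi\,\Phi'\big)'\;+\;\frac{m-1}{\psi^{2}}\big(\lambda_P-b_n\,\varphi\,\Phi'\big),
\]
where $\lambda_P>0$ is the Einstein constant of the symmetric metric $g_P$ and $b_n$ is a structure constant depending linearly on $n$. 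The crucial feature is that $\scal^{\mathrm{Ch}}(\bg)\equiv c$ is a \emph{single} ODE for the \emph{pair} $(\varphi,\psi)$: one profile may be prescribed and the other solved for, so that the only real obstructions are the behaviour at the singular orbit(s), positivity of the remaining profile, and completeness. For $i=2$ I would fix $\psi\equiv 1$; then $\scal^{\mathrm{Ch}}(\bg)\equiv c$ collapses to $\big(\varphi'+\tfrac{K}{2}\varphi^{2}\big)'=C\varphi$ with $K=(m-1)b_n$ and $C$ affine in $c$. Choosing $c$ so that $C=0$, and orienting so that $K>0$ (possible since $n>0$), the solution with $\varphi(0)=0$ and $\varphi'(0)=\e_0$, the slope prescribed by the smoothness condition at the singular orbit, is the explicit profile $\varphi(t)=\sqrt{2\e_0/K}\,\tanh\!\big(\sqrt{\e_0K/2}\;t\big)$ on $[0,\infty)$: it is odd at $t=0$ with $\varphi'(0)=\e_0$, so $\bg[\varphi,1]$ extends smoothly across the singular orbit; $\varphi>0$ on $(0,\infty)$ with $\varphi\to\sqrt{2\e_0/K}$, so $\bg[\varphi,1]$ is complete and asymptotic to a product; and $\diff\omega=-b_n\varphi\,\diff t\wedge\pi^*\omega_P\neq 0$, so it is non-K\"ahler. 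This is the content of Theorem \ref{thm:chcsc-M2}.

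For $i=4$ both endpoints of $[0,L]$ are singular orbits (so completeness is automatic), and $\bg[\varphi,\psi]$ must satisfy the smoothness conditions at \emph{both}: $\varphi(0)=\varphi(L)=0$, $\varphi'(0)=\e_0>0$, $\varphi'(L)=-\e_L<0$, $\psi'(0)=\psi'(L)=0$, with $\e_0,\e_L$ dictated by $n$ and by the Euler numbers of the two sections. Here I would fix a smooth $\varphi>0$ on $(0,L)$ with the required first-order vanishing at the endpoints, chosen so that it is \emph{not} a pure sine, and solve $\scal^{\mathrm{Ch}}(\bg[\varphi,\psi])\equiv c$ for $\psi$ with $c$ a free parameter. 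Writing $\psi=\psi_0\,e^{v}$, this is a second-order ODE for $v$, singular at $0$ and $L$, whose leading part as $\psi_0\to\infty$ is $(m-1)(v'\varphi)'=-\varphi''-c\,\varphi$; integrating once over $[0,L]$ forces, to leading order, $c=(\e_0+\e_L)/\!\int_0^{L}\varphi\,\diff t>0$, after which $v$ is determined up to the additive constant $\log\psi_0$ and is smooth with $v'(0)=v'(L)=0$. A continuity / implicit-function argument in the spirit of Eschenburg--Wang \cite{eschenburg-wang} and B\"ohm \cite{boehm98,boehm99} --- exploiting the Fredholm properties of the linearised operator $w\mapsto\tfrac1\varphi(w'\varphi)'$ on functions with $w'(0)=w'(L)=0$, whose kernel consists of the constants and whose one-dimensional cokernel is absorbed by adjusting $c$ --- then upgrades this to an honest smooth solution for every large $\psi_0$, with $\psi$ close to $\psi_0e^{v}$ (hence positive) and $\scal^{\mathrm{Ch}}(\bg)$ a positive constant; non-K\"ahlerness follows since for $\psi_0$ large $\diff\omega=(2\psi\psi'-b_n\varphi)\diff t\wedge\pi^*\omega_P$ is dominated by $2\psi_0^{2}e^{2v}v'\,\diff t\wedge\pi^*\omega_P$, and $v'\not\equiv 0$ precisely because $\varphi$ was chosen not to be a sine. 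This yields Theorem \ref{thm:Chcsc-M4}.

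The step I expect to be the real obstacle is the $i=4$ two-point problem: controlling the singular ODE at both collapsing orbits simultaneously, while keeping $\psi$ positive and the metric genuinely non-K\"ahler, which in practice requires the Malgrange-type local analysis of \cite{malgrange} near each singular orbit (as already used for Theorem \ref{thm:loc}) glued to a global shooting/degree argument. By comparison the case $i=2$ is soft once the explicit profile above is in hand, and the cases $i=1,3$ are immediate from Remark \ref{rem:homCE}.
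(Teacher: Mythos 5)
Your handling of the cases without singular orbits ($i=1,3$) coincides with the paper's (Remark \ref{rem:homCE}) and is fine, but both cases with singular orbits have genuine problems. For $i=2$, your explicit profile rests on a sign you do not control. With $h\equiv 1$, Equation \eqref{scal} reads $\scal^{\rm Ch}(\bg)=-2\tfrac{f''}{f}+\tfrac{4m(m-1)n}{p}f'+4m(m-1)$, and the coefficient of $f'$ is \emph{positive}, forced by $n>0$; in your notation $K=(m-1)b_n=-\tfrac{2m(m-1)n}{p}<0$, so there is no ``orienting so that $K>0$''. Multiplying $\scal^{\rm Ch}\equiv c$ by $f$ and integrating once, the choice $c=4m(m-1)$ gives $f'=1+\tfrac{m(m-1)n}{p}f^{2}$, whose solution is a rescaled \emph{tangent} blowing up in finite time, not the hyperbolic tangent you wrote down, so no complete metric is produced. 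The strategy of freezing $h$ is salvageable --- for instance $f(r)=r$, $h\equiv 1$ meets the smoothness conditions, is complete, is non-K\"ahler by \eqref{eq:kahler}, and has $\scal^{\rm Ch}\equiv 4m(m-1)(1+n/p)$ --- but as written your argument for Theorem \ref{thm:chcsc-M2} fails. (The paper instead uses $f=\tfrac{p}{2mn}\phi\phi'$, $h=\phi$ with $\phi'=\sqrt{u(\phi)}$, which linearizes the problem into the explicitly integrable ODE \eqref{ODEu(t)}.)

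For $i=4$ you have not given a proof: the singular two-point boundary value problem is delegated to an unexecuted continuity/implicit-function scheme, and the steps that carry the actual difficulty --- the mapping properties of the linearised operator on spaces encoding the Verdiani--Ziller smoothness conditions at \emph{both} collapsing orbits, positivity of $\psi$, and the nondegeneracy needed to absorb the cokernel by varying $c$ --- are asserted rather than established (you yourself flag this as ``the real obstacle''). The paper avoids all of this: the same substitution \eqref{ansCh}--\eqref{anseqscal'} reduces $\scal^{\rm Ch}\equiv c$ to the linear ODE \eqref{ODEu(t)}, whose general solution $u_{a,b,c}$ is explicit; the boundary conditions at $r=0$ and $r=\pi$ become four algebraic conditions on $(a,b,c)$ and the free endpoint value $k=\phi(\pi)$, three of which determine $a(k),b(k),c(k)$, and an intermediate value argument in $k$ supplies the fourth (Theorem \ref{thm:Chcsc-M4}). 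To complete your route you would have to carry out the Fredholm/IFT analysis in full; adopting the paper's reduction is much more economical.
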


Notice that, in complex dimension $m=2$, the compact case with two singular orbits (corresponding to $i=4$) reduces to the Hirzebruch surfaces. This case has been treated by Koca and Lejmi \cite{koca-lejmi}, who proved the existence of positive constant Chern-scalar curvature of cohomogeneity one. \medskip

The paper is organized as follows. In Section \ref{prel}, we recall some basics on the Chern connection and cohomogeneity one actions. In Section \ref{sec:bb}, we recall the construction of $(M_{(i,n)}(\fG,\fK),\bJ,\bg)$ following B\'erard-Bergery, and we compute the Hermitian curvatures and torsion of such manifolds. In Section \ref{sec:metrics}, we investigate the existence of special non-K\"ahler Hermitian metrics, proving Theorem \ref{thm:a} and Theorem \ref{thm:b}.
In Section \ref{sec:chern-einstein}, we prove Theorem \ref{thm:c} and Theorem \ref{thm:d}.
Finally, in Appendix \ref{A}, we collect the detailed computations needed in Section \ref{sec:bb}, precisely to prove Propositions \ref{prop:chern-ricci} and \ref{propLee}, and in Section \ref{sec:metrics}, namely to prove Equation \eqref{ddcw}.

\bigskip

\noindent {\it Acknowledgments.} The authors are warmly grateful to Christoph B\"ohm, Simone Calamai, Liviu Ornea, Alexandra Otiman, Fabio Podest\`a, Andrea Spiro, Cristiano Spotti, Luigi Verdiani for several interesting discussions and suggestions on the topic.

\section{Preliminaries} \label{prel}
\setcounter{equation} 0

In this section, we briefly recall some facts on the Chern connection and on Hermitian manifolds with cohomogeneity one actions by holomorphic isometries.

\subsection{The Chern connection} \hfill \par

Let $(M,\bJ,\bg)$ be a connected, complete, Hermitian manifold of real dimension $\dim_{\bR}M=2m$. Let us denote by $\w \= \bg(\bJ \,\cdot\,, \cdot\,)$ its fundamental $2$-form, by $D$ its Levi-Civita connection and by $\n$ its Chern connection, which is defined by
\begin{equation} \label{defChern}
\bg(\n_AB,C) \= \bg(D_AB,C)-\tfrac12\diff\w(\bJ A,B,C) 
\end{equation}
for any $A,B,C \in \G(TM)$. Since, by the Koszul Formula,
\begin{multline} \label{KosLC}
2\bg(D_AB,C) = \eL_A(\bg(B,C))+\eL_B(\bg(A,C))-\eL_C(\bg(A,B)) \\ +\bg([A,B],C)-\bg([A,C],B)-\bg([B,C],A)
\end{multline}
and
\begin{multline} \label{dw}
\diff\w(A,B,C) = \eL_A(\bg(\bJ B,C))-\eL_B(\bg(\bJ A,C))+\eL_C(\bg(\bJ A,B)) \\ +\bg([A,B],\bJ C)+\bg([B,C],\bJ A)+\bg([C,A],\bJ B) \,\, ,
\end{multline}
it follows that
\begin{multline} \label{KosChern}
2\bg(\n_AB,C)= \eL_A(\bg(B,C)) - \eL_{\bJ A}(\bg(\bJ B,C)) +\bg([A,B],C) \\ -\bg([\bJ A,B],\bJ C) -\bg([A,C],B) +\bg([\bJ A,C],\bJ B) \,\, . 
\end{multline}
Moreover, it is well-known that the Chern connection is characterized by the following properties
\begin{equation} \label{propChern}
\n \bg = \n \bJ =0 \,\, , \qquad \bJ \t(A,B)=\t(\bJ A,B)=\t(A,\bJ B) \,\, , 
\end{equation}
where $\t(A,B) \= \n_AB -\n_BA - [A,B]$ is the {\em torsion} tensor of $\n$.

For later use, we observe the following straightforward
\begin{lemma} For any $A \in \G(TM)$ it holds that \begin{align}
\eL_A\bJ = -[\n A, \bJ] \,\, , \label{LJ1} \\
\eL_{\bJ A}\bJ = \bJ \circ \eL_A\bJ \,\, . \label{LJ2}
\end{align}
\end{lemma}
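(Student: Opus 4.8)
The two identities are pointwise and tensorial in nature once we fix notation, so the plan is to compute both sides on arbitrary vector fields $B \in \G(TM)$, using only the defining property $\n\bJ = 0$ from \eqref{propChern} together with the torsion relation $\t(A,B) = \n_AB - \n_BA - [A,B]$.

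For \eqref{LJ1}, I would start from the definition of the Lie derivative of the endomorphism $\bJ$: for any $B$,
\begin{equation*}
(\eL_A\bJ)(B) = \eL_A(\bJ B) - \bJ(\eL_A B) = [A,\bJ B] - \bJ[A,B].
\end{equation*}
Now rewrite each bracket using the torsion: $[A,\bJ B] = \n_A(\bJ B) - \n_{\bJ B}A - \t(A,\bJ B)$ and $[A,B] = \n_A B - \n_B A - \t(A,B)$. Since $\n\bJ = 0$ we have $\n_A(\bJ B) = \bJ \n_A B$, so $\n_A(\bJ B) - \bJ \n_A B = 0$, and these Chern-derivative terms cancel against $\bJ$ applied to the corresponding term in $[A,B]$. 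What remains is
\begin{equation*}
(\eL_A\bJ)(B) = -\n_{\bJ B}A + \bJ\n_B A - \t(A,\bJ B) + \bJ\,\t(A,B).
\end{equation*}
By the last property in \eqref{propChern}, $\bJ\t(A,B) = \t(A,\bJ B)$, so the torsion terms cancel too, leaving $(\eL_A\bJ)(B) = \bJ\n_B A - \n_B(\bJ A) \cdot$ — wait, more precisely $-\n_{\bJ B}A + \bJ \n_B A$. One checks this equals $-(\n_{\bJ B}A - \bJ\n_B A)$, and since $\n A$ denotes the $\End(TM)$-valued $1$-form $B \mapsto \n_B A$, the commutator $[\n A, \bJ]$ acts on $B$ as $(\n A)(\bJ B) - \bJ((\n A)(B)) = \n_{\bJ B}A - \bJ\n_B A$. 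Hence $(\eL_A\bJ)(B) = -[\n A,\bJ](B)$, which is \eqref{LJ1}.

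For \eqref{LJ2}, I would substitute $\bJ A$ in place of $A$ into \eqref{LJ1} to get $\eL_{\bJ A}\bJ = -[\n(\bJ A),\bJ]$, and then observe that $\n(\bJ A) = \bJ \circ \n A$ (again because $\n\bJ = 0$), so $[\n(\bJ A),\bJ] = [\bJ\circ\n A,\bJ] = \bJ\circ\n A\circ\bJ - \bJ\circ\bJ\circ\n A$. Meanwhile $\bJ\circ[\n A,\bJ] = \bJ\circ\n A\circ\bJ - \bJ\circ\bJ\circ\n A$, which is the same expression; therefore $\eL_{\bJ A}\bJ = -\bJ\circ[\n A,\bJ] = \bJ\circ\eL_A\bJ$, using \eqref{LJ1} once more. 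The only mild subtlety — and the single place that requires care rather than mere bookkeeping — is keeping the convention for $[\n A,\bJ]$ as an endomorphism-valued object consistent throughout, i.e. being careful about which slot of $\n A$ is the $1$-form slot; once that convention is pinned down, both identities are immediate consequences of $\n\bJ = 0$ and the symmetry $\bJ\t(A,\cdot) = \t(A,\bJ\,\cdot)$.
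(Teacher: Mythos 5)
Your proof is correct and follows essentially the same route as the paper's: both rewrite the Lie brackets via the torsion identity $[A,C]=\n_AC-\n_CA-\t(A,C)$, cancel terms using $\n\bJ=0$ and $\bJ\t(A,B)=\t(A,\bJ B)$, and deduce \eqref{LJ2} from \eqref{LJ1} via $\n(\bJ A)=\bJ\circ\n A$. The only cosmetic difference is that the paper records the first computation for a general endomorphism $E\in\G(\End(TM))$ before specializing to $E=\bJ$, with the same convention $[\n A,E]B=(\n A)(EB)-E((\n A)B)$ that you pin down.
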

\begin{proof} If $E \in \G(\End(TM))$, then \begin{align*}
(\eL_AE)B &= \eL_A(EB)-E(\eL_AB) \\
&= \n_A(EB) -\n_{EB}A -\t(A,EB) -E(\n_AB) +E(\n_BA) +E(\t(A,B)) \\
&= -[\n A, E]B +(\n_AE)B +E(\t(A,B)) -\t(A,EB) \,\, .
\end{align*} Then, Equation \eqref{LJ1} follows by setting $E=\bJ$ and using Equation \eqref{propChern}. On the other hand, from Equation \eqref{propChern} and Equation \eqref{LJ1} we get
$$
\eL_{\bJ A}\bJ = -[\n \bJ A, \bJ] = -[\bJ\n A, \bJ] = -\bJ [\n A, \bJ] = \bJ \circ \eL_A\bJ
$$
and so the thesis follows. \end{proof}

A real vector field $A \in \G(TM)$ is {\it holomorphic} if $\eL_A\bJ=0$. Hence, we get

\begin{corollary} Let $A \in \G(TM)$. Then it holds:
\begin{itemize}
\item[$\bcdot$] $A$ is holomorphic if and only if $$\n_{\bJ B}A=\bJ\n_BA \quad \text{ for any } B \in \G(TM) \,\, ;$$
\item[$\bcdot$] $A$ is holomorphic if and only if $\bJ A$ is holomorphic.
\end{itemize}
\end{corollary}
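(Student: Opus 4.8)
The plan is to read off both equivalences directly from the Lemma, so the proof is essentially a bookkeeping argument. Throughout, recall that $\n A$ denotes the $TM$-valued endomorphism $B\mapsto \n_BA$, so that the commutator $[\n A,\bJ]$ appearing in Equation \eqref{LJ1} acts by
\[
[\n A,\bJ]B = (\n A)(\bJ B) - \bJ\big((\n A)B\big) = \n_{\bJ B}A - \bJ\n_BA \qquad\text{for all } B\in\G(TM).
\]

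For the first bullet, I would argue as follows. The identity above shows that the condition $\n_{\bJ B}A=\bJ\n_BA$ for every $B\in\G(TM)$ is exactly the vanishing of the endomorphism $[\n A,\bJ]$. By Equation \eqref{LJ1} we have $\eL_A\bJ=-[\n A,\bJ]$, so this vanishing is equivalent to $\eL_A\bJ=0$, i.e.\ to $A$ being holomorphic. This gives the claimed equivalence.

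For the second bullet, I would invoke Equation \eqref{LJ2}, namely $\eL_{\bJ A}\bJ=\bJ\circ\eL_A\bJ$. Since $\bJ$ is a pointwise invertible endomorphism of $TM$, the tensor $\bJ\circ\eL_A\bJ$ vanishes identically if and only if $\eL_A\bJ$ does; hence $\bJ A$ is holomorphic precisely when $A$ is.

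I do not expect any substantial obstacle here: the entire content sits in the Lemma, and the only slightly delicate point is the notational one, namely correctly expanding $[\n A,\bJ]$ in terms of $\n_{\bJ B}A$ and $\bJ\n_BA$, which is what turns Equation \eqref{LJ1} into the first statement. (As a minor alternative for the second bullet, one could instead combine the first bullet with $\n_{\bJ B}(\bJ A)=\bJ\n_B(\bJ A)$, using $\n\bJ=0$; but the route via Equation \eqref{LJ2} is shorter.)
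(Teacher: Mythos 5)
Your argument is correct and is exactly the route the paper intends: the corollary is stated as an immediate consequence of the Lemma, with the first bullet obtained by expanding $[\n A,\bJ]B=\n_{\bJ B}A-\bJ\n_BA$ in Equation \eqref{LJ1} and the second by invertibility of $\bJ$ in Equation \eqref{LJ2}. Nothing is missing.
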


Finally, we set $\diff^{\,c} \= \bJ^{-1} \! \circ \diff\, \circ \bJ$, so that
$$
\diff = \p+\bar{\p} \,\, , \quad \diff^{\,c} = -\sqrt{-1}(\p-\bar{\p}) \,\, , \quad \diff\diff^{\,c}=2\sqrt{-1}\p\bar{\p}
$$
and, for any smooth function $f: M \to \bR$, we denote by $\D_{\bg}^{\rm Ch}f \= \bg(\diff\diff^{\,c}\!f,\w)$ the {\it Chern-Laplacian} of $f$.

\subsection{Second-Chern-Einstein metrics} \label{sec:Chernprob} \hfill \par

Let $(M,\bJ,\bg)$ be a connected, complete, Hermitian manifold of real dimension $\dim_{\bR}M=2m$. We recall that, by the lack of symmetries of the Chern-curvature
$$
R^{\rm Ch}(\bg)(A,B) \= \n_{[A,B]} - [\n_A,\n_B] \,\, ,
$$
we have (at least) two ways to trace the Ricci tensor. We call {\it first Chern-Ricci curvature} the tensor defined by
$$
\Ric^{\rm Ch[1]}(\bg)(A,B)_x \= \sum_{e_\alpha} \bg_x\big(R^{\rm Ch}(\bg)_x(A_x,\bJ B_x)\,e_{\alpha},\,\bJ e_\alpha\big) \,\, ,
$$
where $(e_\alpha, \bJ e_\alpha)$ is a $(\bJ,\bg)$-unitary frame for the tangent space at $x$. Similarly, we call {\it second Chern-Ricci curvature} the tensor defined by
$$
\Ric^{\rm Ch[2]}(\bg)(A,B)_x \= \sum_{e_\alpha} \bg_x(R^{\rm Ch}(\bg)_x(e_\alpha,\bJ e_\alpha)\,A_x,\,\bJ B_x) \,\, .
$$
Finally, the {\it Chern-scalar curvature} is the function given by
$$
\scal^{\rm Ch}(\bg)(x) \= 2 \sum_{e_\alpha}\Ric^{\rm Ch[i]}(\bg)_x(e_\alpha,e_\alpha) \,\, , \quad i=1,2 \,\, .
$$
We remark that, according to our notation, when $\bg$ is K\"ahler it holds that $\Ric^{\rm Ch[1]}(\bg)=\Ric^{\rm Ch[2]}(\bg)=\Ric(\bg)$ and $\scal^{\rm Ch}(\bg)=\scal(\bg)$, where $\Ric(\bg)$ and $\scal(\bg)$ denote the Riemannian Ricci tensor and the Riemannian scalar curvature of $\bg$, respectively. \smallskip

This yields to the following

\begin{definition} \label{def:CE}
Let $i \in \{1,2\}$. The metric $\bg$ is said to be {\em weakly} (respectively, {\em strongly}) {\em $i^{\rm th}$-Chern-Einstein} if there exists $\l \in \cC^{\infty}(M, \bR)$ (respectively, $\l \in \bR$) such that
$$
\Ric^{{\rm Ch}[i]}(\bg)=\tfrac{\l}{2m}\bg \,\, .
$$
\end{definition}

We stress that the first-Chern-Einstein problem is basically understood for {\em compact} complex manifolds $X=(M,\bJ)$, see \cite{angella-calamai-spotti-2, tosatti}. Indeed, strongly first-Chern-Einstein metrics with non-zero Chern-scalar curvature are K\"ahler-Einstein. Moreover, by conformal methods, if a Hermitian metric $\bg$ is weakly first-Chern-Einstein with non-identically-zero Chern-scalar curvature, then it is conformal to a K\"ahler metric in the class $\pm c_1(X)$ (see \cite[Theorem A]{angella-calamai-spotti-2}). Finally, compact complex manifolds with first Bott-Chern class $c_1^{BC}(X)=0$ are the so-called {\em non-K\"ahler Calabi-Yau manifolds} \cite{tosatti} and always admit Chern-Ricci flat metrics \cite[Theorem 1.2]{tosatti}, see also \cite[Corollary 2]{tosatti-weinkove}.

On the other hand, the second-Chern-Einstein problem seems to be geometrically more appealing, see {\itshape e.g.} \cite{gauduchon-ivanov, streets-tian, podesta, angella-calamai-spotti-2}. Note that a Hermitian metric $\bg$ on $X=(M,\bJ)$ is second-Chern-Einstein according to Definition \ref{def:CE} if and only if the induced Hermitian metric $h(V,W) \= \bg(V,\ol{W})$ on the holomorphic tangent bundle $T^{1,0}X$ is Hermite-Einstein by taking trace with itself \cite{gauduchon}.

\begin{rem} \label{rmk:conf-invariant}
We observe that the second-Chern-Einstein condition is satisfied by a Hermitian metric $\bg$ if and only if it is satisfied by all the metrics in its conformal class (see \cite{gauduchon}), since for any smooth function $f: M \to \bR$ it holds that
$$
\Ric^{{\rm Ch}[2]}(e^f\bg) = \Ric^{{\rm Ch}[2]}(\bg)-(\D_{\bg}^{\rm Ch}f)\bg \,\, .
$$
We remark that this is strongly different from the Riemannian analogue, {\it i.e.} the Einstein condition. On the other hand, we stress that a K\"ahler metric is second-Chern-Einstein if and only if it is Einstein.
\end{rem}

Note that, up to our knowledge, a non-K\"ahler example of second-Chern-Einstein metric on a compact complex manifold of complex dimension $m > 2$ with negative scalar curvature is still missing.

\subsection{Cohomogeneity one group actions on Hermitian manifolds} \hfill \par

Let us consider a compact, connected real Lie group $\fG$ which acts effectively by holomorphic isometries on $(M,\bJ,\bg)$ with cohomogeneity one \cite{mostert, berard-bergery, besse, podesta-spiro}. Then, the orbit space $\W \= \fG \backslash M$ is homeomorphic to one of the following:
$$
\text{(i)}\,\, \W \simeq \bR \,\, , \qquad
\text{(ii)}\,\, \W \simeq [0,+\infty) \,\, , \qquad
\text{(iii)}\,\, \W \simeq S^1 \,\, , \qquad
\text{(iv)}\,\, \W \simeq [0,\pi] \,\, .
$$
Up to homothety, we can choose a unit speed geodesic $\g:\ol{I} \to M$ which intersects orthogonally any $\fG$-orbit \cite[Section 2.7]{berard-bergery}, where the interval $I \subset \bR$ is defined as
$$
\text{(i)}\,\, I \= \bR \,\, , \qquad \text{(ii)}\,\, I \= (0,+\infty) \,\, , \qquad \text{(iii)}\,\, I \= ({-}\pi,\pi) \,\, , \qquad \text{(iv)}\,\, I \= (0,\pi) \,\, .
$$
Then, for $r \in I$, the orbit $\eS_r \= \fG \cdot \g(r)$ is principal and can be identified with a fixed homogeneous space by means of the $1$-parameter family of $\fG$-equivariant diffeomorphisms
$$
\phi_r: \fG/\fH \to \eS_r \,\, , \quad \phi_r(a\fH) \= a \cdot \g(r) \,\, ,
$$
where $\fH \subset \fG$ is a closed subgroup. For $r \in \p I$, the following cases occur: \begin{itemize}
\item[(i)] $\p I = \emptyset$ and all the orbits are principal;
\item[(ii)] $\p I = \{0\}$, the orbit $\eS_0 \= \fG \cdot \g(0)$ is non-principal and $\fG$-equivariantly diffeomorphic to a homogeneous space $\fG/\fL$, where $\fH \subset \fL \subset \fG$ is an intermediate subgroup and $\fL / \fH$ is a sphere, see {\itshape e.g.} \cite[Section 2.12]{berard-bergery};
\item[(iii)] $\p I = \{\pm\pi\}$, the orbits $\eS_{\pm\pi} \= \fG \cdot \g(\pm\pi)$ are principal but in general $\g(-\pi) \neq \g(\pi)$, see {\itshape e.g.} \cite[Section 2.10]{berard-bergery};
\item[(iv)] $\p I = \{0,\pi\}$, the orbits $\eS_0 \= \fG \cdot \g(0)$, $\eS_{\pi} \= \fG \cdot \g(\pi)$ are non-principal and $\fG$-equivariantly diffeomorphic to two homogeneous space $\fG/\fL_{\pm}$, with $\fH \subset \fL_- \cap \fL_+$ and $\fL_{\pm} / \fH$ are spheres, see {\itshape e.g.} \cite[Section 2.13]{berard-bergery}.
\end{itemize}

The subset $M^{\rm reg} \= \bigcup_{r \in I} \eS_r$ of regular points is an open, dense submanifold of $M$ which projects onto $I$ by means of the canonical projection $M \to \W$ and the restricted Riemannian metric splits as
$$
\bg|_{M^{\rm reg}}=\diff r^2 + \bg|_{\eS_r} \,\, ,
$$
where $r$ is the coordinate on $I$. We fix an $\Ad(\fG)$-invariant inner product $Q$ on the Lie algebra $\gg \= \Lie(\fG)$ and a $Q$-orthogonal decomposition $\gg=\gh+\gm$, with $\gh \= \Lie(\fH)$. We consider identified $\gm \simeq T_{e\fH}\fG/\fH$ by means of the evaluation map $X \mapsto \frac{d}{ds}\exp(sX)\fH \big|_{s=0}$, where $\exp: \gg \to \fG$ denotes the Lie exponential map of $\fG$. We also identify any $\fG$-invariant tensor field on $\fG/\fH$ with the corresponding $\Ad(\fH)$-invariant tensor on $\gm$ in the usual way. \smallskip

Firstly, we define the $1$-parameter family $(g_r) \subset \Sym^2(\gm^*)^{\Ad(\fH)}$ by
$$
g_r \= (\phi_r)^*(\bg|_{\eS_r}) \,\, .
$$
We also set
$$
T_r \= \frac{(\diff \phi_r)_{e\fH}^{-1}\big(\bJ_{\g(r)}\dot{\g}(r)\big)}{\big|(\diff \phi_r)_{e\fH}^{-1}\big(\bJ_{\g(r)}\dot{\g}(r)\big)\big|_Q} \in \gm \,\, , \quad \theta_r \= Q(T_r,\,\cdot\,)|_{\gm} \in \gm^* \,\, .
$$
Then, for any $r \in I$, the complex structure $\bJ$ induces a linear complex structure $J_r$ on $\gp_r \= \ker(\theta_r) \subset \gm$ by setting
$$
J_r : \gp_r \to \gp_r \,\, , \quad J_r \= (\diff \phi_r)_{e\fH}^{-1} \circ \bJ_{\g(r)} \circ (\diff \phi_r)_{e\fH}|_{\gp_r} \,\, .
$$
The integrability of $\bJ$ implies that, for any $X,Y \in \gp_r$, 
\begin{equation} \begin{gathered}
\!\, [J_rX,Y]_{\gm}+[X,J_rY]_{\gm} \in \gp_r \,\, , \\ J_r\big([J_rX,Y]_{\gm}+[X,J_rY]_{\gm}\big)=[J_rX,J_rY]_{\gm}-[X,Y]_{\gm} \,\, .
\end{gathered} \end{equation}
Moreover, since the subgroup $\fH \subset \fG$ leaves any point of the geodesic $\g$ fixed and $\fG$ acts by holomorphic isometries, it follows that
\begin{equation}
[\gh,T_r]=0 \quad \text{ for any } r \in I \label{hT}
\end{equation}
and so $(\theta_r,J_r)$ is a $1$-parameter family of $\fG$-invariant CR structures on $\fG/\fH$, see {\itshape e.g.} \cite{alekseevsky-spiro}. 

We recall now the following definition \cite{podesta-spiro}: $\fG/\fH$ is said to be {\it ordinary} if $\fG$ is semisimple, the normalizer $\fK\=\fN_{\fG}(\fH^{\zero})$ of the connected component $\fH^{\zero}$ of $\fH$ is the centralizer of a torus and $\dim\fK=1+\dim\fH$. This implies that: \begin{itemize}
\item[$\bcdot$] $T \equiv T_r$ and $\gp \equiv \gp_r$ do not depend on $r \in I$;
\item[$\bcdot$] the Lie algebra $\gk \= \Lie(\fK)$ splits $Q$-orthogonally as $\gk = \gh \oplus \bR T$.
\end{itemize}
Moreover, the complex structure $\bJ$ is said to be {\it projectable} if each $J_r$ is $\Ad(\fK)$-invariant. In this case, $(J_r)_{r \in I}$ is mapped onto a $1$-parameter family of $\fG$-invariant complex structures on the flag manifold $\fG / \fK$. Since the set of invariant complex structures on a flag manifold is discrete \cite{nishiyama, alekseevsky-perelomov}, it follows that $J \equiv J_r$ is constant.

\begin{definition}[\cite{podesta-spiro, alekseevsky-zuddas-1}]\label{def:standard}
The Hermitian manifold $(M,\bJ,\bg)$ acted by $\fG$ is said to be {\it standard} if the following conditions are satisfied: \begin{itemize}
\item[$\bcdot$] the principal orbits are ordinary and the complex structure $\bJ$ is projectable;
\item[$\bcdot$] the non-principal orbits, if they exist, are flag manifolds with the induced complex structure.
\end{itemize} \end{definition}

In Section \ref{sec:bb}, a distinguished kind of cohomogeneity one standard Hermitian manifolds will be investigated.

\section{B\'erard-Bergery standard cohomogeneity one Hermitian manifolds}\label{sec:bb}
\setcounter{equation} 0

In this section, we consider a special class of standard cohomogeneity one Hermitian manifolds, following the construction of B\'erard-Bergery \cite{berard-bergery}. Then, we compute the Chern connection and the Chern-Ricci tensors of such manifolds.

\subsection{Chern connection of standard cohomogeneity one Hermitian manifolds}\label{sec:stcohom1} \hfill \par

Let $(M,\bJ,\bg)$ be a standard cohomogeneity one Hermitian manifolds acted effectively by holomorphic isometries by a compact, connected real Lie group $\fG$.

Hereafter, we adopt the same notation introduced in Section \ref{prel}. Notice that the complement $\gm$ of $\gh$ in $\gg$ admits a $Q$-orthogonal, $\Ad(\fH)$-invariant decomposition $\gm= \ga + \gp$, where $\ga \= \bR T$ is the trivial submodule. Since, by hypothesis, $\gp$ does not contain any trivial $\Ad(\fH)$-submodule, the metrics $g_r$ induced by $\bg$ on $\gm$ split uniquely as $$g_r = F(r)^2 Q|_{\ga {\otimes} \ga} + g_r|_{\gp {\otimes} \gp}$$ by means of the Schur Lemma, where $F: I \to \bR$ is a smooth, positive function, possibly satisfying some appropriate boundary condition.

From now on, given $V \in \gg$, we denote by $V^* \in \G(TM)$ the fundamental vector field on $M$ associated to $V$, that is $V^*_p \= \frac{d}{ds}\exp(sV)\cdot p \big|_{s=0}$, and we also set
\begin{equation} \label{defN}
N\=F\tfrac{\p}{\p r} \,\, .
\end{equation}
Notice that, by construction,
\begin{equation} \label{JNT}
(\bJ N)_{\g(r)} = T^*_{\g(r)}
\end{equation}
and, for any $V,W \in \gg$, it holds
\begin{equation} \label{brack*}
[V^*,W^*]=-[V,W]^* \,\, , \quad [N,V^*]=0 \,\, .
\end{equation}
We observe also that, for any $X \in \gp$ and for any $Y_1,Y_2 \in \gm$, we have
\begin{equation} \label{gJ*}
\bg(Y_1^*,Y_2^*)_{\g(r)} = g_r(Y_1,Y_2) \,\, , \quad (\bJ X^*)_{\g(r)} = (JX)^*_{\g(r)} \,\, .
\end{equation}
By hypothesis, the fundamental vector fields $V^*$ are holomorphic, {\itshape i.e.}
\begin{equation} \label{V*J}
[V^*,\bJ A] = \bJ [V^*,A] \quad \text{ for any $A \in \G(TM)$ } \,\, .
\end{equation}

By using Equations \eqref{KosLC} and \eqref{dw}, one can directly obtain the explicit formulas for the Levi-Civita connection and the $3$-form $\diff\w$. Here and in the following statements, we consider the context and assume the notation as above.

\begin{prop} \label{propLC}
Let $X, Y, Z \in \gp$. Then, the non-vanishing components of the Levi-Civita connection are given by
\begin{align*}
2\bg\big(D_{Y^*}X^*,Z^*\big)_{\g(r)} &= g_r([X,Y],Z)+g_r([Z,X],Y)+g_r([Z,Y],X) \,\,,\\
2\bg\big(D_{Y^*}X^*,T^*\big)_{\g(r)} &= g_r([X,Y],T)+g_r([T,X],Y)+g_r([T,Y],X) \,\,,\\
2\bg\big(D_{Y^*}X^*,N\big)_{\g(r)} &= -F(r)\tfrac{\p}{\p r}(g_r(X,Y)) \,\,,\\
2\bg\big(D_{T^*}X^*,Z^*\big)_{\g(r)} &= -g_r([X,Z],T)-g_r([T,X],Z)-g_r([T,Z],X) \,\,,\\
2\bg\big(D_NX^*,Z^*\big)_{\g(r)} &= F(r)\tfrac{\p}{\p r}(g_r(X,Z)) \,\,,\\
2\bg\big(D_{Y^*}T^*,Z^*\big)_{\g(r)} &= -g_r([Y,Z],T) +g_r([T,Y],Z)-g_r([T,Z],Y) \,\,,\\
2\bg\big(D_{T^*}T^*,N\big)_{\g(r)} &= -2F'(r)F(r)^2 \,\,,\\
2\bg\big(D_NT^*,T^*\big)_{\g(r)} &= 2F'(r)F(r)^2 \,\,,\\
2\bg\big(D_{Y^*}N,Z^*\big)_{\g(r)} &= F(r)\tfrac{\p}{\p r}(g_r(Y,Z)) \,\,,\\
2\bg\big(D_{T^*}N,T^*\big)_{\g(r)} &= 2F'(r)F(r)^2 \,\,,\\
2\bg\big(D_NN,N\big)_{\g(r)} &= 2F'(r)F(r)^2 \,\,.
\end{align*}
\end{prop}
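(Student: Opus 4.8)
The plan is to derive all eleven identities by a single, systematic application of the Koszul formula~\eqref{KosLC}, which among the displayed identities is the only one needed here since the Levi-Civita connection depends on the metric alone. We work at a point $\g(r)$, $r\in I$, where a frame for $T_{\g(r)}M$ is given by $N_{\g(r)}$, $T^*_{\g(r)}$ and the vectors $X^*_{\g(r)}$ with $X$ ranging over a basis of $\gp$. The ingredients are: (i) each fundamental field $V^*$ is a Killing field (the action is isometric), so $\eL_{V^*}(\bg(B,C))=\bg([V^*,B],C)+\bg(B,[V^*,C])$; (ii) the relations~\eqref{brack*}, namely $[V^*,W^*]=-[V,W]^*$ and $[N,V^*]=0$; (iii) the identifications~\eqref{gJ*} together with the block decomposition $g_r=F(r)^2Q|_{\ga\otimes\ga}+g_r|_{\gp\otimes\gp}$, which give $\bg(T^*,T^*)_{\g(r)}=F(r)^2$ and $\bg(N,N)=F^2$; and (iv) the orthogonality of $N$ to the (principal) orbits in $M^{\rm reg}$, whence $\bg(N,V^*)\equiv 0$ for every $V\in\gg$. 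We also use that $N$ is tangent to $\g$ at $\g(r)$, so that $N(f)|_{\g(r)}=F(r)\tfrac{\p}{\p r}(f\circ\g)$ for any smooth $f$.

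The computation then falls into three patterns. For the purely orbit-tangent term $2\bg(D_{Y^*}X^*,Z^*)$ with $X,Y,Z\in\gp$, expanding the three Lie-derivative terms of~\eqref{KosLC} via (i) produces six bracket terms which, together with the three bracket terms already present, reduce — after the cancellations dictated by $\bg([V^*,W^*],U^*)=-\bg([W^*,V^*],U^*)$ and~\eqref{brack*} — to exactly the three $g_r$-bracket summands in the statement; the same manipulation yields the formulas for $2\bg(D_{Y^*}X^*,T^*)$, $2\bg(D_{T^*}X^*,Z^*)$ and $2\bg(D_{Y^*}T^*,Z^*)$, the only difference being which entries of a bracket lie in $\ga$ and which in $\gp$. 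For a term with exactly one slot equal to $N$, such as $2\bg(D_NX^*,Z^*)$, $2\bg(D_{Y^*}X^*,N)$ or $2\bg(D_{Y^*}N,Z^*)$, all summands of the form $\bg(N,V^*)$, $\eL_A(\bg(N,V^*))$, $\bg(N,[V^*,W^*])$ and $[N,V^*]$ vanish by (ii) and (iv), leaving a single term $\pm\eL_N(\bg(X^*,Z^*))=\pm F(r)\tfrac{\p}{\p r}g_r(X,Z)$. For a term with two or more slots among $\{N,T^*\}$, such as $2\bg(D_{T^*}T^*,N)$, $2\bg(D_NT^*,T^*)$ or $2\bg(D_NN,N)$, the same vanishing leaves only $\eL_N(\bg(T^*,T^*))$ or $\eL_N(\bg(N,N))$, each equal to $N(F^2)|_{\g(r)}=2F(r)^2F'(r)$, with the signs as stated.

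The work is entirely elementary, so the main effort is careful bookkeeping, with three points worth attention. First, since $\fH$ fixes the geodesic pointwise, $X^*_{\g(r)}=0$ for $X\in\gh$; this is what allows writing $g_r([X,Y],Z)$ in the statement in place of $g_r([X,Y]_\gm,Z)$, and ensures that only the $\gm$-components of brackets matter at $\g(r)$. Second, identities like $\bg(T^*,T^*)_{\g(r)}=F(r)^2$ need hold only along $\g$, which suffices because every term of~\eqref{KosLC} is evaluated at $\g(r)$ and $N$ differentiates only along $\g$ there. Third, the claim that the listed components are the \emph{only} non-vanishing ones requires repeating the computation for the remaining frame triples — e.g.\ $\bg(D_{T^*}T^*,X^*)$, $\bg(D_NN,X^*)$, $\bg(D_NN,T^*)$ — where the surviving terms are of the form $g_r([T,X],T)=F(r)^2\,Q([T,X],T)$, which vanish by the $\Ad(\fG)$-invariance of $Q$; the torsion-free identity $D_AB-D_BA=[A,B]$ and metric compatibility then pin down all remaining entries. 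Organizing the eleven (plus the few vanishing) cases so that these cancellations are transparent is the only real difficulty.
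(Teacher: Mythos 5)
Your proposal is correct and coincides with the paper's own (essentially unwritten) argument: the paper simply asserts that Proposition \ref{propLC} follows directly from the Koszul formula \eqref{KosLC}, and your systematic use of the Killing property of the fundamental fields, the relations \eqref{brack*} and \eqref{gJ*}, the orthogonality $\bg(N,V^*)=0$, and the vanishing of $V^*$ along $\g$ for $V\in\gh$ is exactly the intended bookkeeping. Nothing further is needed.
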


\begin{prop} \label{propdw}
Let $X,Y,Z \in \gp$. Then, the $3$-form $\diff\w$ is given by \begin{align*}
\diff\w(X^*,Y^*,Z^*)_{\g(r)} &= g_r([X,Y],JZ) +g_r([Y,Z],JX) +g_r([Z,X],JY) \,\, , \\
\diff\w(X^*,Y^*,T^*)_{\g(r)} &= g_r([T,X],JY) +g_r([T,JY],X) \,\, ,\\
\diff\w(X^*,Y^*,N)_{\g(r)} &= F(r)\tfrac{\p}{\p r}\big(g_r(JX,Y)\big) +g_r([X,Y],T) \,\, ,\\
\diff\w(X^*,T^*,N)_{\g(r)} &= 0 \,\, .
\end{align*}
\end{prop}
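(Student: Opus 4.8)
The plan is to apply the Cartan-type identity \eqref{dw} to the four non-redundant types of triples built from the moving frame $\{X^*:X\in\gp\}\cup\{T^*\}\cup\{N\}$ --- the remaining components of the $3$-form $\diff\w$ vanish by antisymmetry, since $T^*$ and $N$ can each appear at most once --- and to evaluate everything at the base point $\g(r)$. Three ingredients will carry the computation: (i) the bracket relations \eqref{brack*}, i.e.\ $[V^*,W^*]=-[V,W]^*$ and $[N,V^*]=0$; (ii) the algebraic normalisations along $\g$: $(\bJ N)_{\g(r)}=T^*_{\g(r)}$ and hence $(\bJ T^*)_{\g(r)}=-N_{\g(r)}$ from \eqref{JNT}, $(\bJ X^*)_{\g(r)}=(JX)^*_{\g(r)}$ and $\bg(V_1^*,V_2^*)_{\g(r)}=g_r(V_1,V_2)$ from \eqref{gJ*}, together with $V^*_{\g(r)}=0$ for $V\in\gh$, the $\bg$-orthogonality of $N$ to every orbit, and the block-diagonality of $g_r$ relative to $\gm=\ga\oplus\gp$; (iii) the $\fG$-invariance of $\bg$ and $\bJ$, whence $\eL_{V^*}\w=0$ for every fundamental field $V^*$ (and likewise $\eL_{V^*}N=[V^*,N]=0$).

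For a triple $A,B,C$ of fundamental fields (the cases $(X^*,Y^*,Z^*)$ and $(X^*,Y^*,T^*)$), I would use (iii) and the Leibniz rule to convert the Lie-derivative part of \eqref{dw} into further bracket terms; a short bookkeeping then collapses the identity to
\[
\diff\w(A,B,C)=\w([A,B],C)+\w([B,C],A)+\w([C,A],B).
\]
Each summand is evaluated by writing $[A,B]=-[V,W]^*$ and splitting $[V,W]\in\gh\oplus\ga\oplus\gp$: the $\gh$-part dies at $\g(r)$; the $\ga$-part $Q(T,[V,W])T$ is sent by $\bJ$ to $-Q(T,[V,W])N$, so it contributes only to a component involving $N$; and the $\gp$-part is handled through $(\bJ X^*)_{\g(r)}=(JX)^*_{\g(r)}$, $\bg(V_1^*,V_2^*)_{\g(r)}=g_r(V_1,V_2)$ and the $J$-invariance of $g_r|_\gp$, which lets one trade $g_r(JU,V)$ for $-g_r(U,JV)$. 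This delivers the formula for $\diff\w(X^*,Y^*,Z^*)_{\g(r)}$ immediately. The same mechanism gives $\diff\w(X^*,Y^*,T^*)_{\g(r)}=g_r([T,X],JY)-g_r([T,Y],JX)$, and to recast $-g_r([T,Y],JX)$ as $g_r([T,JY],X)$ one uses projectability of $\bJ$, that is, the $\Ad(\fK)$-invariance of $J$, differentiated to $\ad(T)\circ J=J\circ\ad(T)$ on $\gp$; here one first records $[\gk,\gp]\subseteq\gp$, which follows from $\Ad(\fG)$-invariance of $Q$ and $[\gk,\ga]=0$, the latter being a consequence of \eqref{hT}.

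For the two triples containing $N$, the bracket terms involving $N$ drop out by \eqref{brack*}, and the terms $\eL_{V^*}\big(\w(\,\cdot\,,N)\big)$ reduce --- via $\eL_{V^*}\w=0$ and $\eL_{V^*}N=0$ --- to $\w([V^*,\,\cdot\,],N)$, so that $\bJ N$ is never needed off the geodesic. The sole surviving genuine Lie derivative, $\eL_N\big(\w(A,B)\big)$, is computed at $\g(r)$ as $F(r)\tfrac{d}{dr}$ applied to the restriction of $\w(A,B)$ to $\g$, since $N_{\g(r)}=F(r)\dot\g(r)$; that restriction is $g_r(JX,Y)$ when $(A,B)=(X^*,Y^*)$ and is identically zero when $(A,B)=(X^*,T^*)$. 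Reassembling, using $\bg(N,N)_{\g(r)}=F(r)^2$ for the $\ga$-contribution and $g_r([X,Y],T)=F(r)^2Q(T,[X,Y])$, yields the stated value of $\diff\w(X^*,Y^*,N)_{\g(r)}$; and for $\diff\w(X^*,T^*,N)_{\g(r)}$ every surviving term is zero --- crucially because $[X,T]\in\gp$ (so $\bJ[X,T]^*_{\g(r)}$ has no $N$-component) and $g_r(JX,T)=0$ by block-diagonality --- which is the last identity.

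The hard part will be purely organisational: holding the sign conventions of \eqref{dw} steady while simultaneously tracking the threefold decomposition of each bracket and the two exotic relations $(\bJ N)_{\g(r)}=T^*_{\g(r)}$, $(\bJ T^*)_{\g(r)}=-N_{\g(r)}$. The only genuinely structural (not bookkeeping) input is the appeal to projectability together with $[\gk,\gp]\subseteq\gp$ needed to bring $\diff\w(X^*,Y^*,T^*)$ into the symmetric form in the statement; everything else becomes mechanical once (i)--(iii) are set up, and the analogous computation for the Levi-Civita connection in Proposition \ref{propLC} runs on the very same template with \eqref{KosLC} in place of \eqref{dw}.
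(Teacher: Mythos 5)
Your proposal is correct and follows exactly the route the paper intends (the paper offers no written proof beyond ``apply \eqref{dw} directly''): evaluate \eqref{dw} on frame triples, use $\fG$-invariance of $\w$ to collapse the Lie-derivative terms into brackets via \eqref{brack*}, and finish with the identifications \eqref{JNT}, \eqref{gJ*}, the block-diagonality of $g_r$, and $\Ad(\fK)$-invariance of $J$ to symmetrize the $(X^*,Y^*,T^*)$ component. I checked the one delicate sign cancellation you left implicit in the $(X^*,Y^*,N)$ case (the two Lie-derivative terms each contribute $+g_r([X,Y],T)$ while the bracket term contributes $-g_r([X,Y],T)$), and it comes out as stated.
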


By combining Proposition \ref{propLC} and Proposition \ref{propdw}, and by Equation \eqref{defChern}, we obtain explicit formulas for the Chern connection along the geodesic $\g(r)$. More precisely

\begin{prop} \label{propCh}
Let $X,Y,Z \in \gp$. Then the non-vanishing components of the Chern connection are
\begin{align*}
2\bg\big(\n_{Y^*}X^*,Z^*\big)_{\g(r)} &= g_r([X,Y],Z) +g_r([X,JY],JZ) +g_r([Z,Y],X) -g_r([Z,JY],JX) \,\,, \\
2\bg\big(\n_{Y^*}X^*,T^*\big)_{\g(r)} &= g_r([X,Y],T) \,\,, \\
2\bg\big(\n_{Y^*}X^*,N\big)_{\g(r)} &= g_r([X,JY],T) \,\,, \\
2\bg\big(\n_{T^*}X^*,Z^*\big)_{\g(r)} &= F(r)\tfrac{\p}{\p r}\big(g_r(JX,Z)\big) -g_r([T,X],Z) -g_r([T,Z],X) \,\,, \\
2\bg\big(\n_NX^*,Z^*\big)_{\g(r)} &= F(r)\tfrac{\p}{\p r}(g_r(X,Z)) +g_r([T,JX],Z) +g_r([T,Z],JX) \,\,,\\
2\bg\big(\n_{Y^*}T^*,Z^*\big)_{\g(r)} &= 2g_r([T,Y],Z) -g_r([Y,Z],T) \,\,, \\
2\bg\big(\n_{T^*}T^*,N\big)_{\g(r)} &= -2F'(r)F(r)^2 \,\,, \\
2\bg\big(\n_NT^*,T^*\big)_{\g(r)} &= +2F'(r)F(r)^2 \,\,, \\
2\bg\big(\n_{Y^*}N,Z^*\big)_{\g(r)} &= g_r([JY,Z],T) \,\,,\\
2\bg\big(\n_{T^*}N,T^*\big)_{\g(r)} &= 2F'(r)F(r)^2 \,\,, \\
2\bg\big(\n_NN,N\big)_{\g(r)} &= 2F'(r)F(r)^2 \,\,.
\end{align*}
\end{prop}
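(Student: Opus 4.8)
The plan is to prove Proposition~\ref{propCh} by the obvious route: substitute the formulas of Proposition~\ref{propLC} (for the Levi-Civita connection $D$) and Proposition~\ref{propdw} (for $\diff\w$) into the defining identity \eqref{defChern}, i.e. $\bg(\n_A B,C) = \bg(D_A B,C) - \tfrac12\diff\w(\bJ A, B, C)$, and simplify. The point that makes this clean is that $\bJ$ maps the frame $N, T^*, X^*$ (with $X\in\gp$) into itself along $\g$: by \eqref{JNT} together with $\bJ^2=-\Id$ one has $\bJ N = T^*$ and $\bJ T^* = -N$ along $\g$, while $\bJ X^* = (JX)^*$ by \eqref{gJ*}. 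Hence, after substituting $\bJ A$, the term $\diff\w(\bJ A, B, C)$ is always one of the expressions listed in Proposition~\ref{propdw}, up to reordering its three entries by the antisymmetry of $\diff\w$; in particular it vanishes whenever the resulting triple contains $N$ or $T^*$ twice, or equals $(X^*,T^*,N)$ up to permutation.

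I would organize the verification according to the value of $\bJ A$. For $A = Y^*$ (so $\bJ A = (JY)^*$) one recovers the identities for $\n_{Y^*}X^*$ against each of $Z^*, T^*, N$, and for $\n_{Y^*}T^*$ and $\n_{Y^*}N$; for $A = T^*$ (so $\bJ A = -N$) those for $\n_{T^*}X^*$, $\n_{T^*}T^*$ and $\n_{T^*}N$; for $A = N$ (so $\bJ A = T^*$) those for $\n_N X^*$, $\n_N T^*$ and $\n_N N$. Several of these (e.g. $2\bg(\n_{Y^*}X^*,Z^*)$ and $2\bg(\n_{T^*}X^*,Z^*)$) come out directly, using only $J^2=-\Id$, the skew-symmetry of brackets, and the antisymmetry of $\diff\w$; the $F'F^2$-components ($\n_{T^*}T^*$, $\n_N T^*$, $\n_{T^*}N$, $\n_N N$) are immediate because there the $\diff\w$-contribution vanishes, so the Chern and Levi-Civita values coincide. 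For the remaining ones, two algebraic facts put the raw expression into the stated form: $g_r$ is $J$-Hermitian on $\gp$ (since $\bg$ is Hermitian), so $g_r(JX,JY)=g_r(X,Y)$ and $g_r(JX,Y)=-g_r(X,JY)$; and $\ad_T$ commutes with $J$ on $\gp$, because $\bJ$ is projectable---hence $J$ is $\Ad(\fK)$-invariant---and $\gk=\gh\oplus\bR T$ with $[\gh,T]=0$ by \eqref{hT}. Using these, the mixed terms of the shape $g_r([T,JY],JX)$ produced by $\diff\w$ collapse to $g_r([T,Y],X)$, which is exactly what is needed to pass, for instance, to $2\bg(\n_{Y^*}X^*,T^*)=g_r([X,Y],T)$ and $2\bg(\n_{Y^*}T^*,Z^*)=2g_r([T,Y],Z)-g_r([Y,Z],T)$. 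Finally one checks that the components not listed in the statement vanish, again by the same substitution, using $\bg(Z^*,N)_{\g(r)}=0$ (the vector $N=F\tfrac{\p}{\p r}$ is normal to the orbits) and the block form $g_r = F(r)^2 Q|_{\ga{\otimes}\ga} + g_r|_{\gp{\otimes}\gp}$, which forces e.g. $g_r([T,X],T)=0$ since $Q([T,X],T)=-Q(X,[T,T])=0$.

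The computation requires no new idea; the only genuine difficulty is bookkeeping---tracking signs and permutations when specializing the $3$-form $\diff\w$, consistently applying $\bJ N = T^*$, $\bJ T^* = -N$, $\bJ X^* = (JX)^*$, and making sure the ``unexpected'' pairings such as $\bg(\n_{T^*}X^*,T^*)$ or $\bg(\n_{X^*}N,T^*)$ are actually zero rather than merely absent from Proposition~\ref{propLC}. As an alternative that bypasses Propositions~\ref{propLC}--\ref{propdw} entirely, one could expand the Koszul-type identity \eqref{KosChern} directly on the frame $N, T^*, X^*$, using \eqref{brack*}, \eqref{gJ*} and the block form of $g_r$; this produces the same formulas with essentially the same amount of bookkeeping.
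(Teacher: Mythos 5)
Your proposal is correct and is exactly the route the paper takes: Proposition~\ref{propCh} is obtained there by combining Proposition~\ref{propLC} and Proposition~\ref{propdw} through the defining identity \eqref{defChern}, and the auxiliary facts you invoke ($\bJ N=T^*$, $\bJ T^*=-N$, $\bJ X^*=(JX)^*$, $J$-invariance of $g_r|_{\gp\otimes\gp}$, and $[\ad(T),J]=0$ from projectability) are precisely what is needed to reduce the raw substitution to the stated formulas. Nothing is missing.
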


As a direct consequence of Proposition \ref{propCh} and Equation \eqref{LJ1}, we get 

\begin{cor} \label{corLNJ}
It holds that $(\eL_{N}\bJ)_{\g(r)} = 0$, {\itshape i.e.} $[N,\bJ A]_{\g(r)} = \bJ[N, A]_{\g(r)}$ for any $A \in \G(TM)$.
\end{cor}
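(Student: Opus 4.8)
The plan is to derive $(\eL_N\bJ)_{\g(r)}=0$ from Equation \eqref{LJ1}, which reads $\eL_N\bJ = -[\n N, \bJ]$, so it suffices to show that the endomorphism $\n N$ commutes with $\bJ$ along $\g$. Since $\{N, T^*, X^* : X \in \gp\}$ spans $T_{\g(r)}M$, and $\bJ N = T^*$ by \eqref{JNT}--\eqref{JNT}, $\bJ X^* = (JX)^*$ by \eqref{gJ*}, it is enough to check that $\n_A N$ and $\bJ$ commute when $A$ runs over this spanning set; equivalently, to compute $\n_A N$ for $A \in \{N, T^*, X^*\}$ from Proposition \ref{propCh} and verify $\n_{\bJ A} N = \bJ(\n_A N)$ in each case.

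Concretely, I would read off from Proposition \ref{propCh} the three relevant families of components: $2\bg(\n_{Y^*}N, Z^*) = g_r([JY,Z],T)$, while $\bg(\n_{Y^*}N, T^*)$ and $\bg(\n_{Y^*}N, N)$ vanish (they are not listed), so $\n_{Y^*}N \in \gp^*$ with its $\gp$-component determined by the skew pairing $g_r([JY,\,\cdot\,],T)$; similarly $2\bg(\n_{T^*}N, T^*) = 2F'F^2$, $2\bg(\n_N N, N) = 2F'F^2$, and $\n_{T^*}N$, $\n_N N$ have no $\gp^*$-components. First I would handle the pair $(N, T^*)$: since $\bJ N = T^*$, I must show $\n_{T^*}N = \bJ(\n_N N)$. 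From the listed components $\n_N N = F'F^2 \cdot F^{-2}\,N = F'\,N$ (using $\bg(N,N) = F^2$ along $\g$, which follows from \eqref{defN}), hence $\bJ(\n_N N) = F'\,\bJ N = F'\,T^*$; on the other hand $\n_{T^*}N = F'F^2 \cdot F^{-2}\,T^* = F'\,T^*$ as well (using $\bg(T^*,T^*)=F^2$), giving agreement. Next, for the pair $(X^*, (JX)^*)$ with $X\in\gp$: I need $\n_{(JX)^*}N = \bJ(\n_{X^*}N)$, i.e. the $\gp$-component identity $g_r([J(JX),\,\cdot\,],T) = g_r([JX,\,J(\cdot)\,],T)$ — that is, $-g_r([X,\,\cdot\,],T) = g_r([JX,J\cdot],T)$ — which is exactly the CR-integrability relation \eqref{hT}-type identity, specifically the contraction of the second displayed integrability equation for $(\theta_r,J_r)$ with $T$, combined with $g_r$ being $J_r$-Hermitian on $\gp$. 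Here the only mildly delicate point, and the main obstacle, is to make sure the off-diagonal blocks also match: one must check $\bg(\n_{(JX)^*}N, T^*) = \bg(\bJ\n_{X^*}N, T^*) = -\bg(\n_{X^*}N, \bJ T^*) = \bg(\n_{X^*}N, N)$, and both sides vanish by Proposition \ref{propCh}; likewise the $(N, T^*)$ block against $Z^*$ must be checked to vanish on both sides, which again follows since $\n_{T^*}N$ and $\n_N N$ have no $\gp^*$-component and $\bJ$ preserves the splitting $\bR N \oplus \bR T^* \oplus \gp^*$ along $\g$.

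Assembling these three verifications shows $\n N \circ \bJ = \bJ \circ \n N$ at every point $\g(r)$, whence $[\n N,\bJ]_{\g(r)}=0$ and so $(\eL_N\bJ)_{\g(r)} = 0$ by \eqref{LJ1}; the equivalent statement $[N,\bJ A]_{\g(r)} = \bJ[N,A]_{\g(r)}$ is just the definition of $\eL_N\bJ$ applied to $A$. I expect the bookkeeping of which components are listed versus forced to be zero in Proposition \ref{propCh}, together with the correct use of the integrability relations for $(\theta_r, J_r)$ and the $J_r$-invariance of $g_r|_{\gp}$, to be the part that needs the most care; everything else is a short symmetry check.
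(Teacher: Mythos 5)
Your proof is correct and follows exactly the route the paper intends: reduce via \eqref{LJ1} to showing $[\n N,\bJ]_{\g(r)}=0$, and verify $\n_{\bJ B}N=\bJ\n_BN$ on the spanning set $\{N,T^*,X^*\}$ using the components listed in Proposition \ref{propCh}, the block structure, and the contraction of the integrability relation for $(\theta_r,J_r)$ with $T$. The only blemish is a sign in the displayed $\gp$-component identity: keeping the minus from $\bg(\bJ V,W)=-\bg(V,\bJ W)$, the identity to be checked is $-g_r([X,Z],T)=-g_r([JX,JZ],T)$, i.e.\ $g_r([JX,JZ]-[X,Z],T)=0$, which is precisely what the second integrability equation gives since its right-hand side lies in $\gp=\ker\theta_r$.
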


We can also characterize the K\"ahler metrics as follows. By Proposition \ref{propdw} it follows that
$$
\diff\w(X^*,Y^*,T^*)_{\g(r)}=0 \quad \text{ for any } X,Y \in \gp
$$
if and only if the restriction $g_r|_{\gp{\otimes}\gp}$ is $\Ad(\fK)$-invariant for any $r \in I$. This is equivalent to say that the metrics $(g_r)$ on $\fG/\fH$ are {\em of submersion-type} with respect to the homogeneous circle bundle $\fK/\fH \to \fG/\fH \to \fG/\fK$, namely, they induce metrics on the base such that the projection is a Riemannian submersion.
Moreover, if $g_r|_{\gp{\otimes}\gp}$ is $\Ad(\fK)$-invariant, then the condition
$$
\diff\w(X^*,Y^*,Z^*)_{\g(r)}=0 \quad \text{ for any } X,Y,Z \in \gp
$$
holds true if and only if the induced metrics on the flag manifold $\fG/\fK$ are K\"ahler. Let us fix now a $Q$-orthogonal, $\Ad(\fK)$-invariant, irreducible decomposition
\begin{equation} \label{decp}
\gp= \gp_1+{\dots}+\gp_{\ell} \,\, .
\end{equation}
Since flag manifolds are {\em equal rank homogeneous spaces}, namely ${\rm rank}(\fK)={\rm rank}(\fG)$, it follows that their isotropy representations are always {\em monotypic}, namely $\gp_i \not\simeq \gp_j$ for any $1 \leq i < j \leq \ell$. Hence, the decomposition \eqref{decp} is unique (up to order) and, by the Schur Lemma, the metrics $g_r$ splits uniquely as
$$
g_r = F(r)^2 Q|_{\ga {\otimes} \ga} +h_1(r)^2 Q|_{\gp_1 {\otimes} \gp_1} + {\dots} + h_{\ell}(r)^2 Q|_{\gp_{\ell} {\otimes} \gp_{\ell}} \,\, .
$$
Then, the condition
$$
\diff\w(X^*,Y^*,N)_{\g(r)}=0 \quad \text{ for any } X, Y \in \gp
$$
holds true if and only if
\begin{equation} \label{[T,X]}
\ad(T)|_{\gp_i} = -2\tfrac{h_i(r)h'_i(r)}{F(r)} J \quad \text{ for any $1 \leq i \leq \ell$ } \,\, .
\end{equation}

\subsection{Construction of B\'erard-Bergery manifolds} \label{sec:construction} \hfill \par

Let us assume that $P = P(\fG,\fK)\= \fG/\fK$ is a simply-connected, irreducible compact Hermitian symmetric space, {\itshape i.e.} $\fG$ is a connected, compact, simple Lie group and $\fK \subset \fG$ is a maximal, connected, compact subgroup with center isomorphic to the circle group \cite[Theorem 6.1]{helgason}. Set $\dim_{\bR}P=2(m{-}1)$, with $m\geq2$, and let $p \in \bN$ be the unique positive integer such that $p^{-1}c_1(P)$ is an indivisible (positive) class in the cohomology group $H^2(P;\bZ)$, see \cite[Chapter 5, Section 16]{borel-hirzebruch-1}. In the following, we list the possibilities for $P$, following \cite[Section 7.102 and Section 9.124]{besse}.
$$\begin{array}{c|c|c|c}
P & m & p & \text{conditions} \\
\hline
\fSU(k_1+k_2)/\fS(\fU(k_1){\times}\fU(k_2)) & k_1k_2+1& k_1+k_2 & k_2\geq k_1 \geq 1 \\
\fSO(2k)/\fU(k) & \tfrac{k(k-1)}{2}+1 & 2(k-1) & k\geq5 \\
	\fSp(k)/\fU(k) & \tfrac{k(k+1)}{2}+1 & k+1 & k\geq2 \\
\fSO(k+2)/(\fSO(2){\times}\fSO(k)) & k+1 & k & k\geq5 \\
\fE_6/\fSO(2)\fSpin(10) & 17 & 12 & - \\
\fE_7/\fSO(2)\fE_6 & 28 & 18 & - \\
\end{array}$$

Denote by $\fH \= [\fK,\fK]$ the commutator of $\fK$. By hypothesis, there exists an integer $s\geq 1$ such that $\fH \cap \fZ(\fK) \simeq \bZ_s$. Fix an isomorphism $\imath: \fU(1) \to \fZ(\fK)$ and consider the generator $\a \= \imath(e^{\sqrt{-1}\frac{2\pi}{s}})$ of $\fH \cap \fZ(\fK)$. Then, consider the right action of $\bZ_s$ on $\fH \times \fU(1)$ given by $(h,z)\cdot j \= \big(h\a^j,e^{-\sqrt{-1}\frac{2j\pi}{s}}z\big)$ and take the quotient $\fH{\cdot}\fU(1) \= \fH \times_{\bZ_s}\!\! \fU(1)$. We can consider identified $\fK = \fH{\cdot}\fU(1)$ by means of a Lie group isomorphism (see {\itshape e.g.} \cite[Chapter 0, Theorem 6.9]{bredon}) and, for any positive integer $n$, we take the representation
$$
\rho_n : \fK \to \fU(1) \,\, , \quad \rho_n([h,z]) \= z^{-sn} \,\, .
$$
Then, consider the associated bundle
$$
\S_n = \S_n(\fG,\fK) \= \fG \times_{\rho_n} \fU(1)
$$
with the projection $\pi_n: \S_n \to P$ given by $\pi_n([a,z])\=a\fK$. Notice that $\fG$ acts transitively on the left on $\S_n$ by $\tilde{a} \cdot [a,z] \= [\tilde{a}a,z]$ and the stabilizer of $[e,1]$ is the subgroup $\ker(\rho_n) = \fH {\times} \bZ_n \subset \fK$, where
$$
\fH {\times} \bZ_n \simeq \fH \times_{\bZ_s}\!\! \bZ_{sn} \subset \fH{\cdot}\fU(1) = \fK \quad \text{ by } \quad (h,j) \mapsto \big[h,e^{\sqrt{-1}\frac{2j\pi}n}\big] \,\, .
$$
We are ready to construct the family $M_{(i,n)}(\fG,\fK)$, with $1 \leq i \leq 4$ and $n \in \bN$, of standard cohomogeneity one Hermitian manifolds in the following way.
\begin{itemize}
\item[i)] We set $M_{(1,n)}(\fG,\fK) \= \S_n \times \bR$ and we let $\fG$ act on $M_{(1,n)}$ via $a \cdot (x, r) \= (a\cdot x,r)$. The orbit space is $\W=\bR$.
\item[ii)] We let $M_{(2,n)}(\fG,\fK) \= \S_n \times_{\fU(1)} \bC$ be the homogeneous complex line bundle over $P$ associated to $\pi_n: \S_n \to P$ by means of the standard action of $\fU(1)$ on $\bC$. Then, $M_{(2,n)}(\fG,\fK)$ is equivariantly diffeomorphic to the quotient of $\S_n \times [0,+\infty)$ by the fibration $\S_n \times \{0\} \to P$, on which $\fG$ acts by left multiplication on the first factor.
\item[iii)] We set $M_{(3,n)}(\fG,\fK) \= \S_n \times S^1$ and we let $\fG$ act on $M_{(3,n)}(\fG,\fK)$ via $a \cdot (x, z) \= (a\cdot x,z)$. The orbit space is $\W=S^1$.
\item[iv)] We let $M_{(4,n)}(\fG,\fK) \= \S_n \times_{\fU(1)} \bC \bP^1$ be the homogeneous $\bC \bP^1$-bundle over $P$ associated to $\pi_n: \S_n \to P$ by means of the standard action of $\fU(1)$ on $\bC \bP^1$. Then, $M_{(4,n)}(\fG,\fK)$ is equivariantly diffeomorphic to the quotient of $\S_n \times [0,\pi]$ under the identification of the two boundaries by means of the fibrations $\S_n \times \{0\} \to P$ and $\S_n \times \{\pi\} \to P$, on which $\fG$ acts by left multiplication on the first factor.
\end{itemize}

We observe that the manifolds in families $M_{(3,n)}(\fG,\fK)$ and $M_{(4,n)}(\fG,\fK)$ are compact, and the manifolds in families $M_{(2,n)}(\fG,\fK)$ and $M_{(4,n)}(\fG,\fK)$ are simply connected.
Moreover, manifolds in family $M_{(4,n)}(\fG,\fK)$ are {\em almost-homogeneous} in the sense of \cite{huckleberry-snow}, {\itshape i.e.} the complexified Lie group $\fG^{\bC}$ acts on them by biholomorphisms with one open orbit, see \cite{podesta-spiro-2}. \smallskip

Let $M_{(i,n)}(\fG,\fK)$ be as above. We denote by $B$ be the Cartan-Killing form of $\gg \= \Lie(\fG)$ and we set $\gk \= \Lie(\fK)$, $\gh \= \Lie(\fH)$, $\ga \= \gz(\gk) = \Lie(\fZ(\fK))$. Then, the positive definite $\Ad(\fG)$-invariant scalar product $Q \= \tfrac1{4m}(-B)$ on $\gg$ determines a $\Ad(\fH)$-invariant, $Q$-orthogonal decomposition
$$
\gg=\underbrace{\gh + \ga}_{\gk} + \!\!\!\!\!\!\!\!\!\!\overbrace{\phantom{kiia} \gp}^{\gm} \,\, , \quad \text{ with } \,\,\, [\gk,\gk]=\gh \,\, , \,\,\, [\gh,\ga] = \{0\} \,\, , \,\,\, [\gk,\gp]\subset \gp \,\, , \quad [\gp,\gp] \subset \gk \,\, .
$$
We fix a vector $T \in \ga$ with $Q(T,T)=1$ and we pick the only $\l>0$ such that
\begin{equation} \label{Jad(T)}
J \= \l^{-1}\ad(T)|_{\gp}
\end{equation}
is a linear complex structure on $\gp$ \cite[Chapter XI, Theorem 9.6]{kobayashi-nomizu-2}. Notice that a direct computation implies
$$
-4m = B(T,T) = \Tr(\ad(T) \circ \ad(T)) = -\l^2 \Tr(\Id_{2(m{-}1)})
$$
and so
\begin{equation} \label{lambda}
\l^2=\tfrac{2m}{m-1} \,\, .
\end{equation}

\begin{remark}[{\cite[page 814]{podesta-spiro}}] \label{rem:J}
The linear complex structure $J$ on $\gp$ determines uniquely a $\fG$-invariant, projectable, complex structure $\bJ$ on $M_{(i,n)}(\fG,\fK)$.
\end{remark}

\noindent Moreover, we stress that: \begin{itemize}
\item[$\bcdot$] the restriction $Q_{\gp} \= Q|_{\gp{\otimes}\gp}$ induces a $\fG$-invariant K\"ahler-Einstein metric on the base space $P$ satisfying the equation $\Ric(Q_{\gp}) =2m Q_{\gp}$
\item[$\bcdot$] the scalar product $\tfrac{2m(m-1)n^2}{p^2}Q_{\ga}$, with $Q_{\ga}\=Q|_{\ga{\otimes}\ga}$, corresponds to the standard metric of radius $1$ on the fibres of $\pi_n: \S_n \to P$
\end{itemize}
Then, being $P$ irreducible, any $\fG$-invariant Hermitian metric $\bg$ on $(M_{(i,n)}(\fG,\fK),\bJ)$ which is of submersion-type with respect to $M_{(i,n)}(\fG,\fK) \to P$ is completely determined by two positive, smooth functions $f,h: I \to \bR$, satisfying some appropriate {\em smoothness conditions}, by means of the splitting
\begin{equation} \label{eq:metric}
g_r = \tfrac{2m(m-1)n^2}{p^2}f(r)^2Q_{\ga} +h(r)^2Q_{\gp} \,\, ,
\end{equation}
where $(g_r)$ is again the 1-parameter family of $\fG$-invariant metrics induced by $\bg$ on the principal orbits. Case by case, the smoothness conditions are the following (see {\it e.g.} \cite[page 7]{verdiani-ziller} and \cite[page 39]{berard-bergery}).
\begin{itemize}
\item[i)] For $i=1$, $I=\bR$ and there are no boundary conditions.
\item[ii)] For $i=2$, $I=(0,+\infty)$ and the conditions are: $f$ is the restriction of a smooth odd function on $\bR$ with $f'(0) = 1$ and $h$ is the restriction of a smooth even function on $\bR$.
\item[iii)] For $i=3$, $f,h$ need to be $S^1$-periodic.
\item[iv)] For $i=4$, $I=(0,\pi)$ and the conditions are: $f$ is the restriction of a smooth odd function on $\bR$ satisfying $f(\pi{+}r)=-f(\pi{-}r)$ with $f'(0) = 1 = -f'(\pi)$ and $h$ is the restriction of a smooth even function on $\bR$ satisfying $h(\pi{+}r)=h(\pi{-}r)$.
\end{itemize}
Conversely, any pair of smooth functions $(f,h)$ satisfying the appropriate smoothness condition uniquely defines a smooth $\fG$-invariant Hermitian metric $\bg=\bg(f,h)$, which is of submersion-type.

By Equations \eqref{[T,X]}, \eqref{lambda} and \eqref{eq:metric}, it follows that the metric $\bg(f,h)$ is K\"ahler if and only if the functions $f,h$ verify
\begin{equation} \label{eq:kahler}
h(r)h'(r)+\tfrac{mn}{p}f(r) = 0 \quad \text{ for any $r \in I$ } \,\, .
\end{equation}

\begin{rem} \label{rem:noK}
We notice that the smoothness condition in (iii) and Equation \eqref{eq:kahler} imply immediately that the complex manifolds $(M_{(3,n)}(\fG,\fK),\bJ)$ cannot admit cohomogeneity one, submersion-type K\"ahler metrics (see also \cite[Corollary 20]{alekseevsky-zuddas-1}). Actually, it holds more: it can be easily check that $\pi_1(\S_n)$ is finite, and so its first Betti number is $b_1(\S_n)=0$. In particular, this implies that $(M_{(3,n)}(\fG,\fK),\bJ)$ do not admit K\"ahler metrics at all.
\end{rem}

From now on, we will adopt the following

\begin{definition} \label{def:bb}
A {\em B\'erard-Bergery standard cohomogeneity one Hermitian manifold} is a triple $(M_{(i,n)}(\fG,\fK),\bJ,\bg)$, where $M_{(i,n)}(\fG,\fK)$ is the bundle over $P=\fG/\fK$ constructed as above, $\bJ$ is the unique $\fG$-invariant projectable complex structure on $M_{(i,n)}(\fG,\fK)$ as in Remark \ref{rem:J} and $\bg=\bg(f,h)$ is the Riemannian metric described in Equation \eqref{eq:metric}. Accordingly, any pair $(M_{(i,n)}(\fG,\fK), \bJ)$ will be called {\em B\'erard-Bergery standard cohomogeneity one complex manifold}.
\end{definition}

Let us point out that the above construction can be performed in a more general setting, {\itshape i.e.} by requiring that the base space $P=\fG/\fK$ is a K\"ahler C-space, see \cite{berard-bergery}. However, in this work, we will just focus in the case of $P$ being symmetric and irreducible.

\begin{example} \label{ex:hopf}
Consider the Hermitian symmetric space $P=\bC \bP^{m{-}1}$, corresponding to $\fG=\fSU(m)$ and $\fK=\fS(\fU(1){\times}\fU(m{-}1))$. Here, the $\Ad(\fG)$-invariant scalar product $Q(A_1,A_2)\=-\frac{1}{2}\Tr(A_1A_2)$ on the Lie algebra $\gg=\su(m)$, defined following the above normalization, induces on $P$ the Fubini-Study metric with sectional curvature satisfying $1\leq\sec\leq4$. In this case, the principal orbits are equivariantly diffeomorphic to the lens space $\S_n = \bZ_n \backslash S^{2m-1}$, where $\bZ_n$ acts on $S^{2m-1} \subset \bC^m$ via $k \cdot z \= e^{-i\frac{2k\pi}n}z$, and
$$\begin{aligned}
&M_{(1,n)}(\fG,\fK) = \bZ_n \backslash S^{2m-1} \times \bR \,\, , &\quad& M_{(2,n)}(\fG,\fK) = \cO_{\bC \bP^{m-1}}(-n) \,\, , \\
&M_{(3,n)}(\fG,\fK) = \bZ_n \backslash S^{2m-1} \times S^1 \,\, , &\quad& M_{(4,n)}(\fG,\fK) = \bP\big(\cO_{\bC \bP^{m-1}} \oplus \cO_{\bC \bP^{m-1}}(-n)\big) \,\, .
\end{aligned}$$
Here, we denoted by $\cO_{\bC \bP^{m-1}}$ the trivial line bundle over $\bC \bP^{m-1}$, by $\cO_{\bC \bP^{m-1}}(-1)$ the tautological line bundle and by $\cO_{\bC \bP^{m-1}}(-n) \= \cO_{\bC \bP^{m-1}}(-1)^{\otimes n}$ . In particular: \begin{itemize}
\item[$\bcdot$] if $(i,n)=(3,1)$, then we get a diagonal Hopf manifold;
\item[$\bcdot$] if $(i,n)=(4,1)$, then we get the connected sum $\bC\bP^m \# \ol{\bC\bP}{}^m$;
\item[$\bcdot$] if $m=2$ and $i=4$, then we get all the Hirzebruch surfaces.
\end{itemize}
\end{example}

\begin{remark}
Let us consider $P=\bC \bP^{m{-}1}$ and assume that $m\geq 3$. Since the isotropy representation of the odd sphere $\S_1 = S^{2m-1}=\fSU(m)/\fSU(m-1)$ is monotypic, all the cohomogeneity one Hermitian metrics on $(M_{(i,n)}(\fG,\fK),\bJ)$ are of submersion type with respect to the fibration $M_{(i,n)}(\fG,\fK) \to \bC\bP^{m{-}1}$. This fact does not hold true in general. For example, starting from the Grassmannian $P=\widetilde{\rm Gr}(2,\bR^{m+1})= \fSO(m+1)/(\fSO(2){\times}\fSO(m-1))$ of the oriented $2$-planes in $\bR^{m+1}$, we get that $\S_1 = V(2,\bR^{m+1}) = \fSO(m+1)/\fSO(m-1)$ is the Stiefel manifold of the orthonormal $2$-frames in $\bR^{m+1}$, whose isotropy representation contains two equivalent irreducible summands.
\end{remark}

\begin{remark} \label{admissible}
Notice that, by means of the action of $\fG$, there is a bijective correspondence between the set of smooth functions $\tilde{\f}: M_{(i,n)}(\fG,\fK) \to \bR$ and the set of functions $\f: I \to \bR$ satisfying the appropriate smoothness condition:
\begin{itemize}
\item[i)] if $i=1$, $I=\bR$ and $\f$ is smooth;
\item[ii)] if $i=2$, $I=(0,+\infty)$ and $\f$ is the restriction of a smooth even function on $\bR$;
\item[iii)] if $i=3$, $\f$ is smooth and $S^1$-periodic;
\item[iv)] if $i=4$, $I=(0,\pi)$ and $\f$ is the restriction of a smooth even function on $\bR$ satisfying $\f(\pi{+}r)=\f(\pi{-}r)$.
\end{itemize}
From now on, any function $\f: I \to \bR$ satisfying the appropriate smoothness condition will be called {\it admissible}.
\end{remark}

\begin{remark} \label{confchange}
Let $\bg=\bg(f,h)$ a cohomogeneity one, submersion-type metric on a manifold $(M_{(i,n)}(\fG,\fK),\bJ)$ and $\f: I \to \bR$ a positive, admissible function. Then, the metric $\f^2\bg$ is still a cohomogeneity one, submersion-type metric on $(M_{(i,n)}(\fG,\fK),\bJ)$ and $\f^2\bg = \bg(\hat{f}_{\f},\hat{h}_{\f})$, with
$$
\hat{f}_{\f}(r) \= \xi'_{\f}(\xi^{-1}_{\f}(r))f(\xi^{-1}_{\f}(r)) \,\, , \quad \hat{h}_{\f}(r) \= \xi'_{\f}(\xi^{-1}_{\f}(r))h(\xi^{-1}_{\f}(r)) \,\, ,
$$
where $\xi_{\f}(r) \= \int_0^r\f(t)\diff t $. However we stress that, even if $\bg$ is complete, in general the conformal metric $\f^2\bg$ is not.
\end{remark}

\subsection{Curvature and torsion computations for B\'erard-Bergery manifolds} \label{sec:curvature-bb}\hfill \par

We begin this section by listing the Levi-Civita connection and the Riemannian Ricci tensor of the manifolds $(M_{(i,n)}(\fG,\fK),\bJ,\bg)$. By straightforward computations, from Proposition \ref{propLC} and Equations \eqref{eq:metric}, \eqref{Jad(T)} we get

\begin{prop} \label{prop:lc-connetion}
Let $X,Y,Z \in \gp$. Then
\begin{align*}
D_{Y^*}X^*\big|_{\g(r)} &= \tfrac{\l}2\big(Q(JX,Y)T^*_{\g(r)} -\tfrac{p}{mn}\tfrac{h(r)h'(r)}{f(r)}Q(X,Y)N_{\g(r)}\big) \,\, , \\
D_{T^*}X^*\big|_{\g(r)} &=-\tfrac2{\l} \big(\tfrac{mn}p\big)^2 \tfrac{f(r)^2}{h(r)^2}(JX)^*_{\g(r)} \,\, , \\
D_{N}X^*\big|_{\g(r)} &= \tfrac{2mn}{\l p} f(r)\tfrac{h'(r)}{h(r)}X^*_{\g(r)} \,\,, \\
D_{Y^*}T^*\big|_{\g(r)} &= \big(\l-\tfrac2{\l} \big(\tfrac{mn}p\big)^2\tfrac{f(r)^2}{h(r)^2}\big)(JY)^*_{\g(r)} \,\, , \\
D_{T^*}T^*\big|_{\g(r)} &= -\tfrac{2mn}{\l p} f'(r)N_{\g(r)} \,\, , \\
D_{N}T^*\big|_{\g(r)} &= \tfrac{2mn}{\l p} f'(r)T^*_{\g(r)} \,\, , \\
D_{Y^*}N\big|_{\g(r)} &= \tfrac{2mn}{\l p} f(r)\tfrac{h'(r)}{h(r)}Y^*_{\g(r)} \,\, , \\
D_{T^*}N\big|_{\g(r)} &= \tfrac{2mn}{\l p} f'(r)T^*_{\g(r)} \,\, , \\
D_{N}N\big|_{\g(r)} &= \tfrac{2mn}{\l p} f'(r)N_{\g(r)} \,\, .
\end{align*}
\end{prop}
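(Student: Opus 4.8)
\begin{pf}
The plan is to obtain every identity directly from Proposition~\ref{propLC}, inserting the explicit metric \eqref{eq:metric} together with the normalization \eqref{Jad(T)}. Along the geodesic $\g(r)$ the tangent space is spanned $\bg$-orthogonally by $\{X^*_{\g(r)}:X\in\gp\}$, $T^*_{\g(r)}$ and $N_{\g(r)}$, and one has
$$
\bg(X^*,Y^*)_{\g(r)}=h(r)^2 Q(X,Y)\,,\qquad \bg(T^*,T^*)_{\g(r)}=\bg(N,N)_{\g(r)}=F(r)^2\,,
$$
with $F(r)^2=\tfrac{2m(m-1)n^2}{p^2}f(r)^2$ and all mixed pairings zero (the $\ga$- and $\gp$-blocks of \eqref{eq:metric} are $Q$-orthogonal, while $X^*$ and $T^*$ are tangent to $\eS_r$ and $N=F\tfrac{\p}{\p r}$ is normal to it). Hence each covariant derivative is recovered by pairing with this frame — which yields exactly the scalars computed in Proposition~\ref{propLC} — and dividing by the appropriate frame norm $h^2$ or $F^2$.

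It then remains to simplify the right-hand sides of Proposition~\ref{propLC} for the metric \eqref{eq:metric}. The key observations are: (i) since $[\gp,\gp]\subset\gk$ and $\gk\perp_Q\gp$, any term $g_r([X,Y],Z)$ with $X,Y,Z\in\gp$ vanishes, so the only $\gp\times\gp$ contribution reaching the frame lies along $T$; (ii) by \eqref{Jad(T)}, $\ad(T)|_{\gp}=\l J$, hence $[T,X]=\l JX$ and $[T,JX]=-\l X$ for $X\in\gp$; (iii) $\Ad(\fG)$-invariance of $Q$ gives $Q(JX,Y)=-Q(X,JY)$ and, for $X,Y\in\gp$, $Q([X,Y],T)=-Q(Y,[X,T])=\l Q(JX,Y)$; (iv) in the radial terms only $f,h$ depend on $r$, so $\tfrac{\p}{\p r}(g_r(X,Y))=2hh'Q(X,Y)$ for $X,Y\in\gp$ and $\tfrac{\p}{\p r}(F^2)=2FF'=\tfrac{2m(m-1)n^2}{p^2}\,2ff'$. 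Substituting into each line of Proposition~\ref{propLC} and dividing by the frame norms, every covariant derivative becomes a multiple of one of $T^*$, $N$, $X^*$, $(JX)^*$, $(JY)^*$ (or, as for $D_{Y^*}X^*$, a sum of a $T^*$- and an $N$-term); finally one uses \eqref{lambda} to trade $\tfrac{2m(m-1)n^2}{p^2}$ for $\tfrac{4m^2n^2}{\l^2p^2}$, equivalently $F=\tfrac{2mn}{\l p}f$, which converts the emerging constants into those displayed. For instance, for $D_{Y^*}X^*$ steps (i)--(iii) give the coefficient $\tfrac{\l}{2}Q(JX,Y)$ along $T^*$, while step (iv) gives $-\tfrac{\l}{2}\tfrac{p}{mn}\tfrac{hh'}{f}Q(X,Y)$ along $N$; the remaining eight components are handled the same way. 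Equations \eqref{JNT} and \eqref{gJ*} then let one record the answers with $T^*$ and $(JX)^*$ in place of $\bJ N$ and $\bJ X^*$.

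The computation is routine and presents no genuine obstacle; the only points requiring attention are the bookkeeping of the constants $\l,m,n,p$ and, above all, the normalization of the frame — since $N=F\tfrac{\p}{\p r}$ has $\bg$-norm $F$ rather than $1$, the $N$-component of a connection term is $\bg(\,\cdot\,,N)/F^2$ and not $\bg(\,\cdot\,,N)$, and likewise $\bg(T^*,T^*)=F^2$ must be used when extracting the $T^*$-component.
\end{pf}
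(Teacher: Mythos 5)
Your proposal is correct and follows exactly the route the paper intends: substituting the metric \eqref{eq:metric} and the normalization \eqref{Jad(T)} into the Koszul-type formulas of Proposition \ref{propLC}, then reading off components against the orthogonal frame $\{X^*,T^*,N\}$ with the norms $h^2$ and $F^2=\tfrac{2m(m-1)n^2}{p^2}f^2=\bigl(\tfrac{2mn}{\l p}f\bigr)^2$. The simplifications you list (vanishing of $g_r([X,Y],Z)$ for $X,Y,Z\in\gp$, the identities $[T,X]=\l JX$ and $Q([X,Y],T)=\l Q(JX,Y)$, and the careful division by $F^2$ for the $N$- and $T^*$-components) all check out and reproduce every displayed coefficient.
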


Moreover, from \cite[Proposition 1.14]{grove-ziller}, we directly obtain

\begin{prop}\label{prop:riem-ricci} Let $X \in \gp$ with $Q(X,X)=1$. Then the Riemannian Ricci tensor is given by
\begin{align*}
\Ric(\bg)(N,N)_{\g(r)} &= \tfrac{2m(m-1)n^2}{p^2}f(r)^2\Big({-}\tfrac{f''(r)}{f(r)} {-}2(m{-}1)\tfrac{h''(r)}{h(r)}\Big) \,\, , \\
\Ric(\bg)(T^*,T^*)_{\g(r)} &= \tfrac{2m(m-1)n^2}{p^2}f(r)^2\Big({-}\tfrac{f''(r)}{f(r)} {-}2(m{-}1)\tfrac{f'(r)}{f(r)}\tfrac{h'(r)}{h(r)}{+}2(m-1)\big(\tfrac{mn}p\big)^2\tfrac{f(r)^2}{h(r)^4}\Big) \,\, , \\
\Ric(\bg)(X^*,X^*)_{\g(r)} &= h(r)^2\Big({-}\tfrac{h''(r)}{h(r)} {-}\tfrac{f'(r)}{f(r)}\tfrac{h'(r)}{h(r)}{-}(2m{-}3)\tfrac{h'(r)^2}{h(r)^2} {-}2\big(\tfrac{mn}p\big)^2\tfrac{f(r)^2}{h(r)^4} {+}\tfrac{2m}{h(r)^2}\Big)
\end{align*}
\noindent and
$$
\Ric(\bg)(N,T^*)_{\g(r)} = \Ric(\bg)(N,X^*)_{\g(r)} = \Ric(\bg)(T^*,X^*)_{\g(r)} = 0 \,\, .
$$
Furthermore, the Riemannian scalar curvature is
\begin{align*}
\scal(\bg)(r) = {-}2\tfrac{f''(r)}{f(r)} {-}4(m{-}1)\tfrac{h''(r)}{h(r)} {-}4(m{-}1)\tfrac{f'(r)}{f(r)}\tfrac{h'(r)}{h(r)} &{-}2(m{-}1)(2m{-}3)\tfrac{h'(r)^2}{h(r)^2} \\
&{+}\tfrac{4m(m{-}1)}{h(r)^2} {-}2(m-1)\big(\tfrac{mn}p\big)^2\tfrac{f(r)^2}{h(r)^4} \,\, .
\end{align*}
\end{prop}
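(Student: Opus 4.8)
The plan is to specialise the Grove--Ziller formula \cite[Proposition~1.14]{grove-ziller} for the Ricci tensor of a cohomogeneity one Riemannian metric to our situation. Recall that on $M^{\rm reg}$ one has $\bg = \diff r^2 + g_r$ with each slice $(\fG/\fH,g_r)$ homogeneous and $\tfrac{\p}{\p r} = N/F$ a unit normal field along $\g$, where $F = \tfrac{n}{p}\sqrt{2m(m{-}1)}\,f$ by \eqref{eq:metric}. That formula expresses $\Ric(\bg)$ along $\g$ through three pieces of data: (i)~the shape operator $L_r$ of the principal orbit with respect to $\tfrac{\p}{\p r}$, namely $L_rY = D_Y\tfrac{\p}{\p r}$; (ii)~its $r$-derivative, equivalently the combination $L_r' + \Tr(L_r)\,L_r$; (iii)~the intrinsic Ricci tensor $\Ric_{g_r}$ of $(\fG/\fH,g_r)$. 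So the proof reduces to computing (i)--(iii), evaluating $\Ric(\bg)$ on the $\bg$-unit frame $\{N/F,\,T^*/F,\,X^*/h\}$ (for $Q(X,X)=1$), and rescaling by $|N|^2=|T^*|^2=F^2$ and $|X^*|^2=h^2$ to recover the stated components; the vanishing of the mixed components $\Ric(\bg)(N,T^*)=\Ric(\bg)(N,X^*)=\Ric(\bg)(T^*,X^*)=0$ is built into the formula under our hypotheses, since $L_r$ and $\Ric_{g_r}$ are $\fG$-invariant and respect the $Q$-orthogonal splitting $\gm=\ga\oplus\gp$ (with $\ga=\bR T$) while $\tfrac{\p}{\p r}$ is orthogonal to the orbit.

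For (i): since $N=F\tfrac{\p}{\p r}$ and $\tfrac{2mn}{\l p}=F/f$, Proposition~\ref{prop:lc-connetion} gives $D_{Y^*}\tfrac{\p}{\p r}=\tfrac{h'}{h}Y^*$ for $Y\in\gp$ and $D_{T^*}\tfrac{\p}{\p r}=\tfrac{f'}{f}T^*$, so $L_r$ acts as the scalar $\tfrac{f'}{f}$ on $\ga$ and $\tfrac{h'}{h}$ on $\gp$, and $\Tr(L_r)=\tfrac{f'}{f}+2(m{-}1)\tfrac{h'}{h}$. The $(N,N)$-component needs no intrinsic input: it is the warped-type term $-\Tr(L_r')-\Tr(L_r^2)$, which simplifies to $-\tfrac{f''}{f}-2(m{-}1)\tfrac{h''}{h}$, and multiplying by $F^2$ gives the stated formula. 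Substituting the diagonal $L_r$ into the tangential term $-g_r\big((L_r'+\Tr(L_r)L_r)\,\cdot\,,\cdot\big)$ produces $-\tfrac{f''}{f}-2(m{-}1)\tfrac{f'}{f}\tfrac{h'}{h}$ in the $T^*$-direction and $-\tfrac{h''}{h}-\tfrac{f'}{f}\tfrac{h'}{h}-(2m{-}3)\tfrac{h'^2}{h^2}$ in the $X^*$-direction.

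For (iii): the slice $\fG/\fH$ is the total space of the homogeneous circle bundle $\fK/\fH\to\fG/\fH\to\fG/\fK=P$ determined by the $\Ad(\fK)$-invariant complement $\gp$, and $g_r=F^2Q_\ga+h^2Q_\gp$ turns it into a Riemannian submersion over $(P,h^2Q_\gp)$ with totally geodesic one-dimensional fibres. Run O'Neill's formulas. The base contributes through the K\"ahler--Einstein normalisation $\Ric(Q_\gp)=2m\,Q_\gp$ (recalled before Definition~\ref{def:bb}), yielding the $\tfrac{2m}{h^2}$ term of $\Ric(\bg)(X^*,X^*)$. The O'Neill integrability tensor $A$ is governed by the bundle curvature $[X,Y]_\ga=\l\,Q(JX,Y)\,T$, a consequence of $\ad(T)|_\gp=\l J$ (\eqref{Jad(T)}) and $\Ad(\fG)$-invariance of $Q$; together with $\l^2=\tfrac{2m}{m-1}$ (\eqref{lambda}) its norm gives $\Ric_{g_r}(T/F,T/F)=\tfrac{mF^2}{h^4}$ on the vertical line and the horizontal correction $-2\sum_i|A_{X/h}X_i|^2=-\tfrac{\l^2F^2}{2h^4}$, which after rescaling become exactly the $+2(m{-}1)\big(\tfrac{mn}{p}\big)^2\tfrac{f^2}{h^4}$ and $-2\big(\tfrac{mn}{p}\big)^2\tfrac{f^2}{h^4}$ terms. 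Adding the cohomogeneity one terms from (i) and rescaling by $F^2$, resp.\ $h^2$, gives the remaining two components of $\Ric(\bg)$. Finally the scalar curvature is obtained by tracing, $\scal(\bg)=\tfrac{1}{F^2}\Ric(\bg)(N,N)+\tfrac{1}{F^2}\Ric(\bg)(T^*,T^*)+\tfrac{2(m{-}1)}{h^2}\Ric(\bg)(X^*,X^*)$, and collecting terms.

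The delicate step is (iii): one must verify that the fibres of $\fG/\fH\to P$ are totally geodesic --- which holds because the fibre metric $F^2Q_\ga$ is bi-invariant and of constant length along each orbit --- and then keep careful track of the numerical constants through the O'Neill identities, in particular the scalings by $\l^2=\tfrac{2m}{m-1}$ and by $\tfrac{2m(m-1)n^2}{p^2}$. A robust alternative, avoiding submersion theory, is to compute $\Ric(\bg)$ directly from the Levi-Civita connection of Proposition~\ref{prop:lc-connetion}: form the Riemann tensor $\Rm$ and trace it over the $\bg$-orthonormal frame $\{N/F,\,T^*/F\}\cup\{X_a^*/h:Q(X_a,X_b)=\d_{ab}\}$. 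This is longer but purely mechanical, and the two routes must agree.
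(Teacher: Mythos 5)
Your proposal is correct and follows essentially the same route as the paper, which simply cites \cite[Proposition 1.14]{grove-ziller} and reads off the result; you additionally supply the specialization (shape operator $L_r=\mathrm{diag}(f'/f,\,h'/h\cdot\Id_{\gp})$, the O'Neill computation of $\Ric_{g_r}$ for the circle bundle over $(P,h^2Q_\gp)$ with $\Ric(Q_\gp)=2mQ_\gp$ and $|[X,Y]_\ga|^2$ controlled by $\l^2=\tfrac{2m}{m-1}$), and all the constants you track check out against the stated formulas. No gaps.
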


We compute now the Chern connection and the Chern-Ricci tensors of the ma\-nifolds $(M_{(i,n)}(\fG,\fK),\bJ,\bg)$. From Proposition \ref{propCh} and Equations \eqref{eq:metric}, \eqref{Jad(T)} we get

\begin{prop} \label{prop:cher-connetion}
Let $X,Y,Z \in \gp$. Then
\begin{align*}
\n_{Y^*}X^*\big|_{\g(r)} &= \tfrac\l2\big(Q(JX,Y)T^*_{\g(r)} +Q(X,Y)N_{\g(r)}\big) \,\,, \\
\n_{T^*}X^*\big|_{\g(r)} &=\tfrac{2mn}{\l p}f(r)\tfrac{h'(r)}{h(r)}(JX)^*_{\g(r)} \,\,, \\
\n_{N}X^*\big|_{\g(r)} &=\tfrac{2mn}{\l p}f(r)\tfrac{h'(r)}{h(r)}X^*_{\g(r)} \,\,, \\
\n_{Y^*}T^*\big|_{\g(r)} &= \big(\l-\tfrac2{\l} \big(\tfrac{mn}p\big)^2\tfrac{f(r)^2}{h(r)^2}\big)(JY)^*_{\g(r)} \,\,, \\
\n_{T^*}T^*\big|_{\g(r)} &= -\tfrac{2mn}{\l p}f'(r)N_{\g(r)} \,\,, \\
\n_{N}T^*\big|_{\g(r)} &= \tfrac{2mn}{\l p}f'(r)T^*_{\g(r)} \,\,, \\
\n_{Y^*}N\big|_{\g(r)} &= -\tfrac2{\l} \big(\tfrac{mn}p\big)^2 \tfrac{f(r)^2}{h(r)^2}Y^*_{\g(r)} \,\,, \\
\n_{T^*}N\big|_{\g(r)} &= \tfrac{2mn}{\l p}f'(r)T^*_{\g(r)} \,\,, \\
\n_{N}N\big|_{\g(r)} &= \tfrac{2mn}{\l p}f'(r)N_{\g(r)} \,\,.
\end{align*}
\end{prop}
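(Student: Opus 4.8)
The plan is to specialize the general formulas of Proposition~\ref{propCh} to the present situation, in which the family $(g_r)$ has the explicit shape \eqref{eq:metric}, the bracket relations $[\gp,\gp]\subset\gk$, $[\gk,\gp]\subset\gp$, $[\gh,\ga]=\{0\}$ hold, and $\ad(T)|_{\gp}=\l J$ by \eqref{Jad(T)}. Throughout one uses the $\Ad(\fG)$-invariance of $Q$ in the form $Q([A,B],C)=-Q(B,[A,C])$ and the $Q$-orthogonality of $J$ (immediate from $\ad(T)$ being $Q$-skew together with $J^2=-\Id$). One also records $\bg(N,N)_{\g(r)}=\bg(T^*,T^*)_{\g(r)}=F(r)^2=\tfrac{2m(m-1)n^2}{p^2}f(r)^2$ and $\bg(Z^*,Z^*)_{\g(r)}=h(r)^2Q(Z,Z)$ for $Z\in\gp$ (from \eqref{defN}, \eqref{JNT}, \eqref{gJ*}, \eqref{eq:metric}), together with $\l^2=\tfrac{2m}{m-1}$ from \eqref{lambda}. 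These combine into the single convenient identity $F(r)=\tfrac{2mn}{\l p}f(r)$, whence also $F'(r)=\tfrac{2mn}{\l p}f'(r)$ and $\tfrac{\l F(r)^2}{2h(r)^2}=\tfrac{2}{\l}\bigl(\tfrac{mn}{p}\bigr)^2\tfrac{f(r)^2}{h(r)^2}$.

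With these substitutions each of the eleven lines of Proposition~\ref{propCh} reduces to a one-line check. For the $\gp$-valued part of $\n_{Y^*}X^*$, every bracket occurring in $2\bg(\n_{Y^*}X^*,Z^*)_{\g(r)}$ is a bracket of two elements of $\gp$ (recall $JY,JZ\in\gp$), hence lies in $\gk$, hence its $\gm$-component lies in $\ga$ and is $g_r$-orthogonal to $\gp$; so all four terms vanish and $\n_{Y^*}X^*$ has no $\gp$-component. In $2\bg(\n_{T^*}X^*,Z^*)_{\g(r)}$ and $2\bg(\n_{N}X^*,Z^*)_{\g(r)}$ the two $\ad(T)$-bracket terms cancel against each other (using $[T,\cdot]=\l J$ and the $Q$-skewness of $J$), leaving only the derivative term $F\tfrac{\p}{\p r}\bigl(h^2Q(\cdot,\cdot)\bigr)=2Fhh'Q(\cdot,\cdot)$; dividing by $\bg(Z^*,Z^*)_{\g(r)}=h^2Q(Z,Z)$ yields the coefficient $\tfrac{2mn}{\l p}f\tfrac{h'}{h}$. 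For the $T^*$- and $N$-valued outputs of $\n_{Y^*}X^*$, $\n_{Y^*}T^*$ and $\n_{Y^*}N$, one rewrites $g_r([X,Y],T)=F^2Q([X,Y],T)=\l F^2Q(JX,Y)$, and likewise $g_r([X,JY],T)=\l F^2Q(X,Y)$, $g_r([T,Y],Z)=\l h^2Q(JY,Z)$, $g_r([Y,Z],T)=\l F^2Q(JY,Z)$ and $g_r([JY,Z],T)=-\l F^2Q(Y,Z)$, all by $\Ad$-invariance of $Q$; dividing by $F(r)^2$ or $h(r)^2$ as appropriate and using $\l^2=\tfrac{2m}{m-1}$ gives the coefficients $\tfrac{\l}{2}$, $\l-\tfrac{2}{\l}\bigl(\tfrac{mn}{p}\bigr)^2\tfrac{f^2}{h^2}$ and $-\tfrac{2}{\l}\bigl(\tfrac{mn}{p}\bigr)^2\tfrac{f^2}{h^2}$, respectively. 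Finally the four components $\n_{T^*}T^*$, $\n_{N}T^*$, $\n_{T^*}N$, $\n_{N}N$ that involve only $T^*$ and $N$ are read off immediately from the corresponding lines of Proposition~\ref{propCh} by dividing $\pm2F'F^2$ by $F^2$ and substituting $F'=\tfrac{2mn}{\l p}f'$.

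I do not expect a genuine obstacle: the argument is entirely bookkeeping. The points deserving the most care are (i)~projecting the various brackets onto $\gm=\ga\oplus\gp$ and exploiting $[\gp,\gp]\subset\gk$ to discard the many potential $\gp$-contributions; (ii)~the systematic use of $\Ad$-invariance of $Q$ and of $Q$-orthogonality of $J$ to turn $[T,\cdot]$ and the $\ga$-parts of $[\cdot,\cdot]$ into expressions in $Q$ and $J$; and (iii)~keeping track of the dimensional constants, for which the single reduction $F=\tfrac{2mn}{\l p}f$ (equivalently $\l^2=\tfrac{2m}{m-1}$) suffices. Equivalently, the same formulas can be obtained by first specializing Proposition~\ref{propLC} (that is, Proposition~\ref{prop:lc-connetion}) and Proposition~\ref{propdw} via \eqref{eq:metric} and \eqref{Jad(T)}, and then applying the defining identity \eqref{defChern}; this provides a convenient internal consistency check of the output.
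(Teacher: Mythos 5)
Your proposal is correct and follows exactly the route the paper intends: the paper derives Proposition \ref{prop:cher-connetion} by specializing Proposition \ref{propCh} via Equations \eqref{eq:metric} and \eqref{Jad(T)}, which is precisely your bookkeeping with $F=\tfrac{2mn}{\l p}f$, $[\gp,\gp]\subset\gk$, $\ad(T)|_{\gp}=\l J$, and the $\Ad$-invariance of $Q$. All the individual coefficient computations check out.
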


We are ready to state the following proposition, whose proof will be given in Appendix \ref{A}.

\begin{prop} \label{prop:chern-ricci}
Let $X \in \gp$ with $Q(X,X)=1$.
\begin{itemize}
\item[a)] The first Chern-Ricci tensor verifies
\begin{equation} \label{ric1} \begin{aligned}
\Ric^{\rm Ch[1]}(\bg)(T^*,T^*)_{\g(r)} &= \tfrac{2m(m-1)n^2}{p^2}f(r)^2\Big(-\tfrac{f''(r)}{f(r)} +(m{-}1)\Big(-\tfrac{h''(r)}{h(r)} +\tfrac{h'(r)^2}{h(r)^2} -\tfrac{f'(r)}{f(r)}\tfrac{h'(r)}{h(r)}\Big)\Big) \,\, , \\
\Ric^{\rm Ch[1]}(\bg)(X^*,X^*)_{\g(r)} &= h(r)^2\Big(\tfrac{2mn}p\tfrac{f(r)}{h(r)^2}\Big(\tfrac{f'(r)}{f(r)} +(m{-}1)\tfrac{h'(r)}{h(r)}\Big) +\tfrac{2m}{h(r)^2}\Big) \,\, .
\end{aligned} \end{equation}
\item[b)] The second Chern-Ricci tensor verifies
\begin{equation} \label{ric2} \begin{aligned}
\Ric^{\rm Ch[2]}(\bg)(T^*,T^*)_{\g(r)} &= \tfrac{2m(m-1)n^2}{p^2}f(r)^2 \Big(-\tfrac{f''(r)}{f(r)} +\tfrac{2m(m-1)n}{p}\tfrac{f'(r)}{h(r)^2} +\tfrac{2m^2(m-1)n^2}{p^2}\tfrac{f(r)^2}{h(r)^4}\Big) \,\, , \\
\Ric^{\rm Ch[2]}(\bg)(X^*,X^*)_{\g(r)} &= h(r)^2\Big({-}\tfrac{h''(r)}{h(r)}{+}\tfrac{h'(r)^2}{h(r)^2}{-}\tfrac{f'(r)}{f(r)}\tfrac{h'(r)}{h(r)}{+}\tfrac{2m(m-1)n}{p}f(r)\tfrac{h'(r)}{h(r)^3}{-}2\big(\tfrac{mn}{p}\big)^2\tfrac{f(r)^2}{h(r)^4}{+}\tfrac{2m}{h(r)^2}\Big) \,\, .
\end{aligned} \end{equation}
\item[c)] Both the Chern-Ricci tensors satisfy
\begin{equation} \label{ric12} \begin{gathered}
\Ric^{\rm Ch[i]}(\bg)(N,N)_{\g(r)} = \Ric^{\rm Ch[i]}(\bg)(T^*,T^*)_{\g(r)} \,\, , \\
\Ric^{\rm Ch[i]}(\bg)(N,T^*)_{\g(r)} = \Ric^{\rm Ch[i]}(\bg)(N,X^*)_{\g(r)} = \Ric^{\rm Ch[i]}(\bg)(T^*,X^*)_{\g(r)} =0 \,\, .
\end{gathered} \end{equation}
\item[d)] The Chern-scalar curvature is given by
\begin{multline} \label{scal}
\scal^{\rm Ch}(\bg)(r) = -2\tfrac{f''(r)}{f(r)} -2(m{-}1)\tfrac{h''(r)}{h(r)} +2(m{-}1)\Big(\tfrac{h'(r)}{h(r)} -\tfrac{f'(r)}{f(r)}\Big)\tfrac{h'(r)}{h(r)} +4m(m{-}1)\tfrac1{h(r)^2} \\
\phantom{=,} +\tfrac{4m(m-1)n}{p}\big(f'(r) +(m{-}1)f(r)\tfrac{h'(r)}{h(r)}\big) \tfrac1{h(r)^2}  \,\, .
\end{multline} \end{itemize} \end{prop}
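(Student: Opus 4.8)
All tensors involved are $\fG$-invariant and the normal geodesic $\g$ meets every orbit, so it suffices to work along $\g$; by invariance, $\Ric^{\rm Ch[i]}(\bg)$ is determined at $\g(r)$ by its values on $N$, $T^*$ and $X^*$ for $X\in\gp$. The strategy is to compute the Chern curvature $R^{\rm Ch}(\bg)(A,B)C=\n_{[A,B]}C-\n_A\n_BC+\n_B\n_AC$ at $\g(r)$ directly from the explicit Chern connection of Proposition \ref{prop:cher-connetion}, and then to trace it against an adapted unitary frame. Concretely, the plan is to fix a $Q$-orthonormal basis $X_1,\dots,X_{m-1},JX_1,\dots,JX_{m-1}$ of $\gp$ adapted to $J$ — possible since $J=\l^{-1}\ad(T)$ is $Q$-orthogonal, being $Q$-skew-symmetric (by $\Ad$-invariance of $Q$) and squaring to $-\Id$ — and to set $F(r)\=\tfrac{n}{p}\sqrt{2m(m-1)}\,f(r)$, so that $N=F\tfrac{\p}{\p r}$ by \eqref{eq:metric} and \eqref{defN}, and $\tfrac{2mn}{\l p}f=F$ by \eqref{lambda}. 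Then, by \eqref{eq:metric}, \eqref{JNT} and \eqref{gJ*}, the family
$$
\Big\{\tfrac1{F}N,\ \tfrac1{F}T^*,\ \tfrac1{h}X_1^*,\dots,\tfrac1{h}X_{m-1}^*,\ \tfrac1{h}(JX_1)^*,\dots,\tfrac1{h}(JX_{m-1})^*\Big\}
$$
is a $(\bJ,\bg)$-unitary frame at $\g(r)$, with $\bJ$ swapping the two members of each listed pair.

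Before tracing, I would record the structural identities feeding the computation: $[T,X]=\l JX$ for $X\in\gp$ (from \eqref{Jad(T)}); the $\ga$-component $[X,JX]_{\ga}=\l\,Q(X,X)\,T$, hence $\sum_{a}[X_a,JX_a]_{\ga}=\l(m-1)T$; and the fact that the $\gh$-components of the brackets $[X_a,JX_a]$ enter the relevant traces only through the isotropy representation of $P=\fG/\fK$, whose net contribution is pinned down by the K\"ahler-Einstein identity $\Ric(Q_{\gp})=2mQ_{\gp}$ — precisely the mechanism producing the $\tfrac{2m}{h(r)^2}$ and $\tfrac{4m(m-1)}{h(r)^2}$ terms in \eqref{ric1}, \eqref{ric2} and \eqref{scal}. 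I would also dispatch part (c) at this stage without computation: from $\n\bJ=0$ and the $(1,1)$-type of the Chern curvature one checks that $\Ric^{\rm Ch[1]}(\bg)$ and $\Ric^{\rm Ch[2]}(\bg)$ are both $\bJ$-invariant symmetric $2$-tensors; since $\bJ N=T^*$, since the plane spanned by $N$ and $T^*$ at $\g(r)$ is fixed pointwise by the isotropy $\fH$, and since $\gp$ — being the isotropy module of a flag manifold — contains no trivial $\Ad(\fH)$-submodule, this $\bJ$-invariance forces $\Ric^{\rm Ch[i]}(N,N)_{\g(r)}=\Ric^{\rm Ch[i]}(T^*,T^*)_{\g(r)}$ and annihilates all the mixed entries, which is \eqref{ric12}. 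Hence only the diagonal entries $\Ric^{\rm Ch[i]}(T^*,T^*)$ and $\Ric^{\rm Ch[i]}(X^*,X^*)$ genuinely require the connection formulas.

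It then remains to assemble \eqref{ric1} and \eqref{ric2}. For $\Ric^{\rm Ch[2]}(\bg)(T^*,T^*)_{\g(r)}=\sum_{e_\alpha}\bg\big(R^{\rm Ch}(\bg)(e_\alpha,\bJ e_\alpha)T^*,\bJ T^*\big)$, with $e_\alpha$ running over the half-frame $\{\tfrac1FN,\tfrac1hX_1^*,\dots,\tfrac1hX_{m-1}^*\}$, the term from $\tfrac1FN$ yields the $-\tfrac{f''}{f}$ contribution and part of the $\tfrac{f'}{h^2}$ contribution, while the terms from the $\tfrac1hX_a^*$ yield the remainder, via $\n_{T^*}T^*$, $\n_NT^*$ and $\n_{X_a^*}T^*$ together with their $N$-derivatives along $\g$; the computation of $\Ric^{\rm Ch[2]}(\bg)(X^*,X^*)$ is the same with $T^*$ replaced by $X^*$, now also using $\n_{T^*}X^*$, $\n_NX^*$ and $\n_{X^*}N$. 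Part (a) is the analogous trace taken in the other slot, $\Ric^{\rm Ch[1]}(\bg)(A,B)=\sum_{e_\alpha}\bg\big(R^{\rm Ch}(\bg)(A,\bJ B)e_\alpha,\bJ e_\alpha\big)$; alternatively, \eqref{ric1} can be obtained more cheaply by recalling that the first Chern-Ricci form is $-\tfrac12\diff\diff^{\,c}\log\Lambda$, where $\Lambda$ is the density of $\w^m$ relative to a local holomorphic volume form, which for an $\fG$-invariant metric (relative to a fixed invariant measure on the open orbit) is proportional to $F(r)\,h(r)^{2(m-1)}$, so that \eqref{ric1} reduces to a one-variable computation using $N=F\tfrac{\p}{\p r}$ and $\bJ N=T^*$. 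Finally, \eqref{scal} in part (d) is one further trace over the half-frame of \eqref{ric1} (equivalently \eqref{ric2}), via $\scal^{\rm Ch}(\bg)=2\sum_{e_\alpha}\Ric^{\rm Ch[i]}(\bg)(e_\alpha,e_\alpha)$.

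The main obstacle is purely organizational: tracking the term $\n_{[A,B]}C$ when $[A,B]$ carries a nonzero $\gh$-component acting nontrivially on $\gp$, and seeing which of the $(m-1)$ summands over $a$ collapse through the Einstein condition of the base. I would neutralize this by splitting $R^{\rm Ch}(\bg)$ a priori into a \emph{base} part — governed by the K\"ahler-Einstein metric $h(r)^2Q_{\gp}$ on $P$ and responsible for all the $1/h(r)^2$ terms in \eqref{ric1}--\eqref{scal} — and a \emph{radial} part depending only on $f$ and $h$; only the latter requires Proposition \ref{prop:cher-connetion}, just as in the Riemannian computation behind Proposition \ref{prop:riem-ricci} (which rests on \cite[Proposition 1.14]{grove-ziller}), while the former is read off from the homogeneous geometry of $P$. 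The remaining verification is the long but routine bookkeeping carried out in Appendix \ref{A}.
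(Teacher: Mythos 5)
Your plan is correct and follows essentially the same route as the paper's Appendix \ref{A}: one computes the relevant components of $R^{\rm Ch}(\bg)$ along $\g$ from the explicit Chern connection of Proposition \ref{prop:cher-connetion}, traces them against the adapted $(\bJ,\bg)$-unitary frame built from $N$, $T^*$ and a $(Q_{\gp},J)$-unitary basis of $\gp$, and evaluates the bracket sums via the Schur lemma together with the normalization $\Ric(Q_{\gp})=2mQ_{\gp}$ (the paper phrases this as $\tfrac1{m-1}\sum_{\tilde e_\a,\tilde e_\b}|[\tilde e_\a,\tilde e_\b]_{\gk}|_Q^2=4m$, which is the same mechanism producing the $2m/h^2$ terms). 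The only genuine deviation is your representation-theoretic disposal of part (c) — $\bJ$-invariance of the $(1,1)$-type curvature traces plus the absence of trivial $\Ad(\fH)$-submodules in $\gp$ — which is a valid and slightly cleaner substitute for the paper's ``by the symmetries of the tensors'' and direct verification of the vanishing mixed components.
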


Given Proposition \ref{prop:chern-ricci}, we are now able to study the second-Chern-Einstein equations and the constant Chern-scalar curvature equation for this special class of Hermitian cohomogeneity one manifolds. This will be done in Section \ref{sec:chern-einstein}. \smallskip

Finally, we recall that the torsion $\t$ of the Chern connection is given by \begin{equation} -2\bg(\t(A,B),C) = \diff\w(\bJ A,B,C)+\diff\w(A,\bJ B,C) \label{tau} \end{equation} and that its trace $\q(A) \= \Tr(\t(A,\cdot))$ is called {\it Lee form}. We recall that it satisfies $\diff\omega^{m-1}=\q\wedge\omega^{m-1}$, see \cite[page 500]{gauduchon-2}. \smallskip

From Proposition \ref{propdw} and Formulas \eqref{Jad(T)}, \eqref{eq:metric} we obtain
\begin{cor} \label{cordw}
Let $X,Y,Z \in \gp$. Then
$$
\diff\w(X^*,Y^*,N)_{\g(r)} = \tfrac{4mn}{\l p}f(r)\big(h(r)h'(r)+\tfrac{mn}pf(r)\big)\rho(X,Y) \,\, ,
$$
where $\rho(X,Y) \= Q_{\gp}(JX,Y)$ is the $\fG$-invariant K\"ahler-Einstein form on $P$, and
$$
\diff\w(X^*,Y^*,Z^*)_{\g(r)} = \diff\w(X^*,Y^*,T^*)_{\g(r)} = \diff\w(X^*,T^*,N)_{\g(r)} = 0 \,\, .
$$
\end{cor}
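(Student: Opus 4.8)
The plan is to deduce Corollary~\ref{cordw} directly from Proposition~\ref{propdw} by substituting the explicit description of the metrics $(g_r)$ given in Equation~\eqref{eq:metric} and the relation $J=\l^{-1}\ad(T)|_{\gp}$ from Equation~\eqref{Jad(T)}. The starting point is the four formulas for $\diff\w$ in Proposition~\ref{propdw}, evaluated on vectors coming from $\gp$ (the fundamental vector fields $X^*,Y^*,Z^*$) together with $T^*$ and $N$; since the metrics are of submersion type, $g_r$ has no mixed $\ga$-$\gp$ terms and restricts to $h(r)^2Q_{\gp}$ on $\gp$, while $\ad(T)$ preserves $\gp$ and acts there as $\l J$.

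First I would handle the three vanishing components. For $\diff\w(X^*,Y^*,Z^*)$, I note that each term $g_r([X,Y],JZ)$ involves $[X,Y]\in\gk=\gh+\ga$ (since $[\gp,\gp]\subset\gk$), whereas $JZ\in\gp$; because $g_r$ is $Q$-orthogonal with respect to the decomposition $\gm=\ga+\gp$ and $\gh$ is annihilated in $\gm^*$, every such pairing vanishes. For $\diff\w(X^*,Y^*,T^*)$, the relevant terms are $g_r([T,X],JY)+g_r([T,JY],X)$; using $[T,X]=\l JX$ and $[T,JY]=\l J(JY)=-\l Y$ this becomes $\l\,h(r)^2\big(Q_{\gp}(JX,JY)-Q_{\gp}(Y,X)\big)=0$ since $J$ is $Q_{\gp}$-orthogonal. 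Finally $\diff\w(X^*,T^*,N)=0$ is already asserted verbatim in Proposition~\ref{propdw}, so nothing is needed there.

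Next I would compute the surviving component $\diff\w(X^*,Y^*,N)_{\g(r)}=F(r)\tfrac{\p}{\p r}\big(g_r(JX,Y)\big)+g_r([X,Y],T)$. With $F(r)=\tfrac{\sqrt{2m(m-1)}\,n}{p}f(r)$ (the scalar in front of $Q_\ga$ in~\eqref{eq:metric}, consistent with $N=F\tfrac{\p}{\p r}$ and $Q(T,T)=1$) and $g_r(JX,Y)=h(r)^2Q_{\gp}(JX,Y)=h(r)^2\rho(X,Y)$, the first term is $F(r)\cdot 2h(r)h'(r)\rho(X,Y)$. For the second term I write $g_r([X,Y],T)=Q([X,Y],T)$ up to the normalization: since $Q(T,T)=1$, we have $g_r([X,Y],T)=F(r)^2\,Q([X,Y],T)$ in the $\ga$-component, and $Q([X,Y],T)=-Q(Y,[X,T])=Q(Y,[T,X])=\l\,Q_{\gp}(JX,Y)=\l\rho(X,Y)$; combining these with $\l^2=\tfrac{2m}{m-1}$ from~\eqref{lambda} and simplifying should collapse everything to $\tfrac{4mn}{\l p}f(r)\big(h(r)h'(r)+\tfrac{mn}{p}f(r)\big)\rho(X,Y)$. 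In particular this recovers, via~\eqref{eq:kahler}, the K\"ahler criterion $h h'+\tfrac{mn}{p}f=0$, which is a good consistency check.

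The main obstacle here is purely bookkeeping rather than conceptual: getting the normalization constants straight. One must be careful that $g_r$ on $\ga\otimes\ga$ is $\tfrac{2m(m-1)n^2}{p^2}f(r)^2Q_\ga$ with $Q(T,T)=1$, so that the fundamental field $T^*$ along $\g(r)$ has squared length $\tfrac{2m(m-1)n^2}{p^2}f(r)^2$, and correspondingly $N=F\p_r$ with $F^2$ equal to that same quantity; tracking the factor $\l$ versus $\sqrt{2m(m-1)}$ via~\eqref{lambda} is where sign or constant errors are most likely to creep in. Once the constants are pinned down, the identity is immediate. I expect the write-up to be one or two lines invoking Proposition~\ref{propdw} and Equations~\eqref{Jad(T)}, \eqref{lambda}, \eqref{eq:metric}, with the detailed constant-chasing relegated to a remark or to Appendix~\ref{A}.
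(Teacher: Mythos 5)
Your proposal is correct and follows exactly the route the paper intends: the corollary is stated as an immediate consequence of Proposition \ref{propdw} together with Equations \eqref{Jad(T)} and \eqref{eq:metric}, and your substitution (including the identifications $F=\tfrac{2mn}{\l p}f$, $[X,Y]_{\gm}\in\ga$ with $Q([X,Y],T)=\l\rho(X,Y)$, and the cancellation in $\diff\w(X^*,Y^*,T^*)$ via $Q_{\gp}(JX,JY)=Q_{\gp}(X,Y)$) reproduces the stated formula, with the two pieces $\tfrac{4mn}{\l p}fhh'\rho$ and $F^2\l\rho=\tfrac{4m^2n^2}{\l p^2}f^2\rho$ combining as claimed. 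The constant-chasing you flag as the only risk indeed checks out.
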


\noindent and, by consequence

\begin{prop} \label{propLee}
Let $X,Y \in \gp$. Then it holds $\t(N,T^*)_{\g(r)} = \t(X^*,Y^*)_{\g(r)} = 0$ and
\begin{align*}
\t(N,X^*)_{\g(r)} &= \tfrac{2mn}{\l p} \tfrac{f(r)}{h(r)^2}\big(h(r)h'(r) +\tfrac{mn}p f(r)\big)X^*_{\g(r)} \,\, ,\\
\t(T^*,X^*)_{\g(r)} &= \tfrac{2mn}{\l p} \tfrac{f(r)}{h(r)^2}\big(h(r)h'(r) +\tfrac{mn}p f(r)\big)(JX)^*_{\g(r)} \,\, .
\end{align*}
Moreover, the Lee form $\q$ satisfies
\begin{equation} \label{theta}
\q(N)_{\g(r)} = \tfrac{4m(m-1)n}{\l p}\tfrac{f(r)}{h(r)^2}\big(h(r)h'(r) +\tfrac{mn}p f(r)\big)
\end{equation}
and $\q(T^*)_{\g(r)} = \q(X^*)_{\g(r)} = 0$.
\end{prop}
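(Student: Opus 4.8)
The plan is to compute $\t(A,B)$ directly from Equation \eqref{tau}, feeding in the three-form values already recorded in Corollary \ref{cordw}, and then to take the trace to obtain $\q$. Since everything is $\fG$-invariant and determined along the geodesic $\g(r)$, it suffices to evaluate on the basis $\{N, T^*, X^*\}$ with $X \in \gp$, $Q(X,X)=1$, using $\bJ N = T^*$ and $\bJ X^* = (JX)^*$ from Equations \eqref{JNT}--\eqref{gJ*}.

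First I would establish the vanishing statements. For $\t(N,T^*)$: from \eqref{tau}, $-2\bg(\t(N,T^*),C) = \diff\w(\bJ N, T^*, C) + \diff\w(N, \bJ T^*, C) = \diff\w(T^*,T^*,C) + \diff\w(N,-N,C) = 0$ for every $C$, using antisymmetry of $\diff\w$ (note $\bJ T^* = \bJ\bJ N = -N$). For $\t(X^*,Y^*)$: $-2\bg(\t(X^*,Y^*),C) = \diff\w((JX)^*,Y^*,C) + \diff\w(X^*,(JY)^*,C)$, and testing against $C \in \{Z^*, T^*, N\}$, Corollary \ref{cordw} shows the only possibly-nonzero pieces are the $\diff\w(\cdot,\cdot,N)$ terms, which give $\frac{4mn}{\l p}f(hh'+\frac{mn}{p}f)\big(\rho(JX,Y) + \rho(X,JY)\big)$; but $\rho(J\cdot,\cdot)$ is symmetric (it equals $Q_\gp$ up to sign), so $\rho(JX,Y)+\rho(X,JY) = 0$, hence $\t(X^*,Y^*)=0$.

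Next, the nonzero components. For $\t(N,X^*)$: $-2\bg(\t(N,X^*),C) = \diff\w(T^*,X^*,C) + \diff\w(N,(JX)^*,C)$. The first term vanishes for all $C$ by Corollary \ref{cordw}; the second is nonzero only for $C = Z^* \in \gp$, giving $\diff\w(N,(JX)^*,Z^*) = -\diff\w((JX)^*,Z^*,N) = -\frac{4mn}{\l p}f(hh'+\frac{mn}{p}f)\rho(JX,Z) = \frac{4mn}{\l p}f(hh'+\frac{mn}{p}f)\,Q_\gp(X,Z)$. Dividing by $-2$ and using $\bg(Z^*,Z^*)_{\g(r)} = h(r)^2 Q_\gp(Z,Z)$ to raise the index yields the stated formula for $\t(N,X^*)$. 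For $\t(T^*,X^*)$ one proceeds identically with $\bJ T^* = -N$, obtaining the same scalar factor times $(JX)^*$; alternatively this follows from the first property in \eqref{propChern}, namely $\bJ\t(N,X^*) = \t(\bJ N, X^*) = \t(T^*,X^*)$. Finally, for the Lee form $\q(A) = \Tr\t(A,\cdot)$: on a $(\bJ,\bg)$-unitary frame adapted to the splitting $\gm = \bR T \oplus \gp$, only the $\gp$-directions contribute to $\q(N)$, each pair $(e_\a, Je_\a)$ of real unit vectors contributing $\frac{2mn}{\l p}\frac{f}{h^2}(hh'+\frac{mn}{p}f)$ twice; since $\dim_\bR\gp = 2(m-1)$, summing gives $\q(N) = \frac{4m(m-1)n}{\l p}\frac{f}{h^2}(hh'+\frac{mn}{p}f)$, while $\q(T^*) = \q(X^*) = 0$ because $\t(T^*,\cdot)$ and $\t(X^*,\cdot)$ have no diagonal entries. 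I expect the only delicate points to be bookkeeping: keeping the $h(r)^2$ metric normalization on $\gp$ straight when raising indices, and the sign in $\rho(JX,Y) = -Q_\gp(X,Y)$ versus the convention $\rho(X,Y) = Q_\gp(JX,Y)$ fixed in Corollary \ref{cordw}.
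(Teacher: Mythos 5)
Your approach is exactly the paper's: feed the values of $\diff\w$ from Corollary \ref{cordw} into Equation \eqref{tau}, read off the components of $\t$ against the frame $\{N,T^*,Z^*\}$, and then trace to obtain $\q$; the vanishing statements, the shortcut $\t(T^*,X^*)=\bJ\t(N,X^*)$ via \eqref{propChern}, and the count $\dim_{\bR}\gp=2(m-1)$ in the trace are all fine. There is, however, one concrete sign slip in the key computation: you write $\diff\w(N,(JX)^*,Z^*)=-\diff\w((JX)^*,Z^*,N)$, but a cyclic permutation of the arguments of a $3$-form is even, so no minus sign appears. The correct chain is $\diff\w(N,(JX)^*,Z^*)=\diff\w((JX)^*,Z^*,N)=\tfrac{4mn}{\l p}f\big(hh'+\tfrac{mn}{p}f\big)\rho(JX,Z)=-\tfrac{4mn}{\l p}f\big(hh'+\tfrac{mn}{p}f\big)Q_{\gp}(X,Z)$, and dividing by $-2$ gives $\bg(\t(N,X^*),Z^*)=+\tfrac{2mn}{\l p}f\big(hh'+\tfrac{mn}{p}f\big)Q_{\gp}(X,Z)$, which raises to the stated formula. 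As written, your intermediate value has the opposite sign and would actually produce $\t(N,X^*)=-\tfrac{2mn}{\l p}\tfrac{f}{h^2}\big(hh'+\tfrac{mn}{p}f\big)X^*$, contradicting your claim that it ``yields the stated formula''; once the spurious minus sign is removed, the argument goes through and coincides with the proof in Appendix \ref{A}.
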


As a direct consequence of Proposition \ref{propLee}, whose proof will be given in Appendix \ref{A}, we get the following

\begin{cor} For any $A,B \in \G(TM)$, it holds that
\begin{equation} \label{eq:lck}
2(m{-}1)\t(A,B) = \big(\q(A)B-\q(B)A\big) -\big(\q(\bJ A)\bJ B-\q(\bJ B)\bJ A\big) \,\, . \end{equation}
\end{cor}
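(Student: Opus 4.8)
The plan is to verify the identity \eqref{eq:lck} by evaluating both sides on a basis adapted to the splitting $T_{\g(r)}M = \bR N \oplus \bR T^* \oplus \gp^*$ along the geodesic, using the explicit formulas for $\t$ and $\q$ from Proposition \ref{propLee}. Since both sides of \eqref{eq:lck} are tensorial in $A$ and $B$, it suffices to check the equality at each point $\g(r)$ on all pairs taken from $\{N, T^*, X^*\}$ with $X \in \gp$, $Q(X,X)=1$; the general case then follows by bilinearity (and by $\fG$-invariance, since $M^{\rm reg}$ is foliated by the orbits through the $\g(r)$, and every point of $M^{\rm reg}$ lies on such an orbit). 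Recall from Corollary \ref{corLNJ} and \eqref{JNT} that $\bJ N = T^*$ and $\bJ T^* = -N$ along $\g$, and that $\bJ X^* = (JX)^*$.

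First I would record the value $c(r) \= \tfrac{2mn}{\l p}\tfrac{f(r)}{h(r)^2}\bigl(h(r)h'(r)+\tfrac{mn}{p}f(r)\bigr)$, so that Proposition \ref{propLee} reads $\t(N,X^*) = c(r)X^*$, $\t(T^*,X^*) = c(r)(JX)^*$, and $\t(N,T^*)=\t(X^*,Y^*)=0$, while \eqref{theta} gives $\q(N) = 2(m-1)c(r)$ and $\q(T^*)=\q(X^*)=0$. Then I would run through the cases. For $(A,B)=(N,T^*)$: the left side is $0$; the right side is $\bigl(\q(N)T^*-\q(T^*)N\bigr) - \bigl(\q(T^*)(-N) - \q(-N)T^*\bigr) = 2(m-1)c(r)T^* - 2(m-1)c(r)T^* = 0$. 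For $(A,B)=(N,X^*)$: left side is $2(m-1)c(r)X^*$; right side is $\bigl(\q(N)X^* - \q(X^*)N\bigr) - \bigl(\q(T^*)(JX)^* - \q((JX)^*)T^*\bigr) = 2(m-1)c(r)X^* - 0$, noting $\q((JX)^*)=0$. For $(A,B)=(T^*,X^*)$: left side is $2(m-1)c(r)(JX)^*$; right side is $\bigl(\q(T^*)X^* - \q(X^*)T^*\bigr) - \bigl(\q(-N)(JX)^* - \q((JX)^*)(-N)\bigr) = 0 - \bigl(-2(m-1)c(r)(JX)^*\bigr)$, which matches. Finally for $(A,B)=(X^*,Y^*)$ with $X,Y\in\gp$: left side is $0$, and since $\q$ annihilates $\gp^*$ and $\bJ\gp^* = \gp^*$, the right side is manifestly $0$ as well; the diagonal cases $A=B$ are trivially $0=0$.

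There is essentially no obstacle here: the statement is a bookkeeping consequence of Proposition \ref{propLee}, and the only point requiring a little care is the sign conventions for $\bJ N = T^*$ versus $\bJ T^* = -N$ (hence $\q(\bJ N) = \q(T^*) = 0$ but $\q(\bJ T^*) = \q(-N) = -2(m-1)c(r)$), together with the fact that $\bJ$ preserves $\gp^*$ so the "$\bJ$-part" of the right-hand side contributes nothing when both arguments lie in $\gp^*$. Once all nine pairs (up to the obvious symmetry $A\leftrightarrow B$, under which both sides are antisymmetric) are checked to agree, tensoriality and the density of $M^{\rm reg}$ in $M$ finish the proof. If one prefers a coordinate-free phrasing, one can instead observe that the torsion, being totally determined by $\diff\w$ via \eqref{tau} and Corollary \ref{cordw}, is supported on the "$\rho$-component", and that the right-hand side of \eqref{eq:lck} is exactly the image of $\q$ under the natural map $T^*M \to \Lambda^2 T^*M \otimes TM$ realizing the locally-conformally-Kähler torsion model in complex dimension $m$; but the direct case check is shorter and completely elementary.
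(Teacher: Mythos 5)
Your proposal is correct and matches the paper's intent: the paper states this corollary as a direct consequence of Proposition \ref{propLee}, and your argument is exactly that verification, carried out on the adapted basis $\{N, T^*, X^*\}$ with the sign conventions $\bJ N = T^*$, $\bJ T^* = -N$ handled correctly in every case. Nothing further is needed.
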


\section{Special Hermitian metrics on B\'erard-Bergery manifolds} \label{sec:metrics}
\setcounter{equation} 0

In this section, we investigate the existence of {\em special} non-K\"ahler Hermitian metrics, such as balanced, pluriclosed, locally conformally K\"ahler, Vaisman, and Gauduchon, on the B\'erard-Bergery standard cohomogeneity one Hermitian manifolds. In particular, we prove Theorem \ref{thm:a} and Theorem \ref{thm:b}.

\subsection{Proof of Theorem \ref{thm:a}}\label{sec:proof-thm-a} \hfill \par

We begin by pointing out the following

\begin{prop}\label{prop:LCK}
All the cohomogeneity one Hermitian metrics $\bg$ of submersion-type on the complex manifold $(M_{(i,n)}(\fG,\fK),\bJ)$ are locally conformally K\"ahler.
\end{prop}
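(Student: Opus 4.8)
The plan is to show that the Lee form $\q$ is closed and that the defining relation $\diff\w = \frac{1}{m-1}\q\wedge\w$ holds; by the characterization recalled in the introduction this is exactly the locally conformally K\"ahler condition. The key computational input is already available: from Proposition \ref{propLee} the Lee form satisfies $\q(T^*)_{\g(r)}=\q(X^*)_{\g(r)}=0$ for all $X\in\gp$, and
$$
\q(N)_{\g(r)} = \tfrac{4m(m-1)n}{\l p}\tfrac{f(r)}{h(r)^2}\big(h(r)h'(r)+\tfrac{mn}{p}f(r)\big) \,\, ,
$$
which is a function of $r$ alone. Since $N = F\tfrac{\p}{\p r}$ and $\diff r$ is (up to the positive factor) dual to $N$ along $\g$, the form $\q$ is a function of $r$ times $\diff r$, hence of the shape $u(r)\,\diff r$ for a smooth function $u$; such a $1$-form is automatically closed. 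This handles $\diff\q=0$.

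\medskip

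\noindent First I would make the duality between $\q$ and $\diff r$ precise: by Equation \eqref{JNT} we have $(\bJ N)_{\g(r)}=T^*_{\g(r)}$ and $N=F\tfrac{\p}{\p r}$, so $\bg$-dually $N^\flat = F\,\diff r$ on $M^{\rm reg}$, and since $\q$ vanishes on $T^*$ and on all $X^*$ it must be proportional to $N^\flat$, namely $\q = \big(\tfrac{1}{F}\q(N)\big)N^\flat = \q(N)\,\diff r$ along $\g$; by $\fG$-invariance this identifies $\q$ on all of $M^{\rm reg}$ as an invariant pullback of a $1$-form on $I$, which is closed. Next I would verify the relation $2(m-1)\t(A,B)=\big(\q(A)B-\q(B)A\big)-\big(\q(\bJ A)\bJ B-\q(\bJ B)\bJ A\big)$, which is precisely Equation \eqref{eq:lck} and is stated as an already-proved corollary of Proposition \ref{propLee}; tracing this identity (or equivalently using \eqref{tau} together with Corollary \ref{cordw}) against the fundamental form gives $\diff\w=\tfrac{1}{m-1}\q\wedge\w$. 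Combining $\diff\q=0$ with this last equation is the definition of locally conformally K\"ahler, so the proof concludes.

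\medskip

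\noindent The only genuine point requiring care — the ``hard part'', though it is light — is checking that $\diff\w=\tfrac{1}{m-1}\q\wedge\w$ actually follows from Equation \eqref{eq:lck} rather than just being consistent with it. I would argue this by a frame computation: pick $A,B,C$ among $\{N, T^*, X^*\}$ and compare $\diff\w(A,B,C)$, obtained from Corollary \ref{cordw}, with $\tfrac{1}{m-1}(\q\wedge\w)(A,B,C) = \tfrac{1}{m-1}\big(\q(A)\w(B,C)-\q(B)\w(A,C)+\q(C)\w(A,B)\big)$. Since $\q$ is supported on $N$ and $\w(N,T^*)=-\bg(\bJ N,N)$... more precisely $\w(N,X^*)=\bg(\bJ N, X^*)=\bg(T^*,X^*)=0$ while $\w(X^*,Y^*)_{\g(r)}=h(r)^2 Q(JX,Y)=h(r)^2\rho(X,Y)$, the only nonzero contribution on the right is $\tfrac{1}{m-1}\q(N)\,h(r)^2\rho(X,Y)$ on the triple $(X^*,Y^*,N)$, and substituting the formulas for $\q(N)$ and $\l^2=\tfrac{2m}{m-1}$ one recovers exactly $\diff\w(X^*,Y^*,N)_{\g(r)}=\tfrac{4mn}{\l p}f(r)\big(h(r)h'(r)+\tfrac{mn}{p}f(r)\big)\rho(X,Y)$ from Corollary \ref{cordw}; all other components vanish on both sides. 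This matching, together with $\diff\q=0$, yields the claim. Finally I would remark that the argument is insensitive to $i\in\{1,2,3,4\}$ and to the boundary behaviour, since the identity is pointwise on $M^{\rm reg}$ and both sides extend smoothly across the singular orbits when present.
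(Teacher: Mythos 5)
Your proof is correct, but it takes a genuinely different route from the paper's. The paper invokes the Weyl-connection characterization of the locally conformally K\"ahler condition from Dragomir--Ornea (that $\bJ$ be parallel for the Weyl connection of $(\bg,\tfrac{1}{m-1}\q)$) and shows by a direct computation with \eqref{defChern} and \eqref{tau} that this identity is equivalent to the torsion identity \eqref{eq:lck}, $2(m-1)\t(A,B)=(\q(A)B-\q(B)A)-(\q(\bJ A)\bJ B-\q(\bJ B)\bJ A)$, which is already established as a corollary of Proposition \ref{propLee}. You instead verify the definition recalled in the introduction directly: $\diff\w=\tfrac{1}{m-1}\q\wedge\w$ by matching components in the frame $\{N,T^*,X^*\}$ against Corollary \ref{cordw} (your computation of the only nontrivial component, on triples $(X^*,Y^*,N)$, is correct), and $\diff\q=0$ from the observation that $\q$ is a $\fG$-invariant function of $r$ times $\diff r$. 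Both arguments rest on the same computational inputs (Proposition \ref{propLee} and Corollary \ref{cordw}); yours is more elementary and self-contained, while the paper's makes the pointwise, tensorial nature of the obstruction transparent (it is exactly the failure of \eqref{eq:lck}) and sidesteps any separate discussion of the closedness of $\q$. Two harmless slips in your write-up: the correct normalization is $\q=\tfrac{\q(N)}{F^2}N^\flat=\tfrac{\q(N)}{F}\,\diff r$ (you are off by a factor of $F$, which does not affect closedness); and $\w(N,T^*)=\bg(\bJ N,T^*)=\bg(T^*,T^*)=F^2\neq 0$ rather than what you wrote, but this component never enters, since every triple containing both $N$ and $T^*$ also contains some $X^*$, on which $\q$ and the relevant entries of $\w$ vanish.
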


\begin{proof}
By \cite[Corollary 1.1]{dragomir-ornea}, we know that $\bg$ is locally conformally K\"ahler if and only if the complex structure $\bJ$ is parallel with respect to the Weyl connection associated to $(\bg,\frac{1}{m-1}\q)$, equivalently, the following is satisfied:
$$
D_{A}\bJ B - \bJ D_A B = \tfrac{1}{2(m-1)} \big(\q(\bJ B)A - \q(B)\bJ A + \bg(A,B)\bJ\q^\# + \bg(\bJ A,B)\q^\# \big)
$$
for any $A, B \in \Gamma(TM)$, where $\q^\# \in \Gamma(TM)$ is defined by the relation $\bg(\q^\#,\cdot) = \q$.
By using Equations \eqref{defChern} and \eqref{tau}, a straightforward computation shows that the above equation is equivalent to Equation \eqref{eq:lck}. Indeed, for any $C\in\Gamma(TM)$,
$$\begin{aligned}
\bg(D_{A}\bJ B &- \bJ D_A B, C) \\
&= \bg(\n_{A}\bJ B,C) +\bg(\n_{A}B,\bJ C) + \tfrac12\diff\w(\bJ A, \bJ B, C) + \tfrac12\diff\w(\bJ A, B, \bJ C) \\
&= -\bg(\t(B,C),\bJ A) \\
&= \tfrac1{2(m-1)}\bg\big(\bJ A,(\q(\bJ B)\bJ C-\q(\bJ C)\bJ B)-(\q(B)C-\q(C)B)\big) \\
&= \tfrac1{2(m-1)}\bg\big(\q(\bJ B)\bg(A,C) -\q(B)\bg(\bJ A,C) +\bg(A,B)\bg(\bJ\q^\#,C) +\bg(\bJ A,B)\bg(\q^\#,C)\big) \\
&= \tfrac1{2(m-1)}\bg\big(\q(\bJ B)A - \q(B)\bJ A + \bg(A,B)\bJ\q^\# + \bg(\bJ A,B)\q^\#,C\big) ,
\end{aligned}$$
which shows the above mentioned equivalence.
\end{proof}

\begin{cor} \label{globCK}
Let $(M_{(i,n)}(\fG,\fK),\bJ,\bg)$ be a B\'erard-Bergery standard cohomogeneity one Hermitian ma\-ni\-fold. Then, $\bg$ is strictly locally conformally K\"ahler if and only if $i=3$.\end{cor}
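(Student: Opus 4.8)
The plan is to reduce the statement to a question about the exactness of the Lee form. By Proposition \ref{prop:LCK} every metric under consideration is already locally conformally K\"ahler, so, according to the terminology fixed in the Introduction, $\bg$ is strictly locally conformally K\"ahler if and only if $\q$ is not exact (otherwise it is globally conformally K\"ahler). Thus the whole content of the Corollary is to show that the de Rham class $[\q]\in H^1_{\mathrm{dR}}(M_{(i,n)}(\fG,\fK);\bR)$ vanishes exactly when $i\neq 3$.

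The first step is to recognise $\q$ as a ``radial'' $1$-form. By Proposition \ref{propLee} one has $\q(T^*)_{\g(r)}=\q(X^*)_{\g(r)}=0$, so $\q$ kills every direction tangent to the principal orbits; combined with $\fG$-invariance this gives, on the regular part $M^{\rm reg}$,
$$\q=\chi(r)\,\diff r\,,\qquad \chi(r)\=\frac{\q(N)_{\g(r)}}{F(r)}=2(m-1)\Big(\frac{h'(r)}{h(r)}+\frac{mn}{p}\frac{f(r)}{h(r)^2}\Big)\,,$$
where the explicit form of $\chi$ follows by substituting \eqref{eq:metric} and \eqref{lambda} into \eqref{theta} (and using $F(r)=\tfrac{\l(m-1)n}{p}f(r)$). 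In particular $\q$ is the pullback, along the orbit map $M_{(i,n)}(\fG,\fK)\to\W$, of a $1$-form on the one-dimensional orbit space $\W$ — which, incidentally, re-proves $\diff\q=0$.

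For $i\in\{1,2,4\}$ I would conclude from $H^1_{\mathrm{dR}}(M_{(i,n)}(\fG,\fK);\bR)=0$: indeed, for $i=2,4$ the total space is simply connected, and for $i=1$ one has $\pi_1(M_{(1,n)}(\fG,\fK))\cong\pi_1(\S_n)$, which is finite. Hence the closed form $\q$ is exact and $\bg$ is globally conformally K\"ahler. (Alternatively one can produce the primitive directly: $\Phi(r)\=\int_0^r\chi(t)\,\diff t$ is an admissible function in the sense of Remark \ref{admissible} — it is even around the singular values of $r$ because there $\chi$ is odd, by the boundary behaviour of $f$ and $h$ — hence descends to a smooth function on $M_{(i,n)}(\fG,\fK)$ with $\q=\diff\Phi$.)

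For $i=3$ the orbit space is $\W=S^1$ and, $f$ and $h$ being $S^1$-periodic, $\chi$ descends to a smooth function on $S^1$; thus $\q=p^*(\chi\,\diff\theta)$, where $p\colon M_{(3,n)}(\fG,\fK)=\S_n\times S^1\to S^1$ is the projection. Were $\q$ exact, its integral over the circle $\{x_0\}\times S^1$ would vanish; but that integral equals
$$\int_{S^1}\chi\,\diff\theta=2(m-1)\int_{S^1}\frac{h'}{h}\,\diff\theta+2(m-1)\frac{mn}{p}\int_{S^1}\frac{f}{h^2}\,\diff\theta=2(m-1)\frac{mn}{p}\int_{S^1}\frac{f}{h^2}\,\diff\theta>0\,,$$
since $\int_{S^1}(h'/h)\,\diff\theta=0$ (as $\log h$ is smooth and periodic) while the remaining integrand is strictly positive because $f,h>0$. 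Hence $\q$ is not exact and $\bg$ is strictly locally conformally K\"ahler. The one point requiring a little care is precisely this case $i=3$: one must check that the pointwise identity $\q=\chi(r)\,\diff r$ on $M^{\rm reg}$ genuinely globalises to the pullback of a \emph{closed, non-exact} $1$-form on $S^1$; for $i\in\{1,2,4\}$ nothing is needed beyond the quoted results and the (already recorded) vanishing of the first Betti number.
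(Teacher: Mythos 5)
Your proof is correct, and for the cases $i\in\{1,2,4\}$ it coincides with the paper's argument: both reduce strict local conformal K\"ahlerness to non-exactness of $\q$ and then invoke simple connectedness (for $i=2,4$) and the finiteness of $\pi_1(\S_n)$, hence $b_1=0$ (for $i=1$). Where you genuinely diverge is the case $i=3$: the paper argues by contradiction through Remark \ref{rem:noK} --- if $\bg$ were globally conformally K\"ahler then $(M_{(3,n)}(\fG,\fK),\bJ)$ would carry a K\"ahler metric, which is impossible because $b_1(\S_n\times S^1)=1$ is odd --- whereas you identify $\q$ on $M^{\rm reg}$ as the pullback $\chi(r)\,\diff r$ of a $1$-form on the orbit space and compute its period over the $S^1$ factor, finding $2(m-1)\tfrac{mn}{p}\int_{S^1} f/h^2>0$ after the $\log h$ term integrates to zero by periodicity. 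The paper's route is shorter and purely topological; yours is self-contained (it does not use the even-$b_1$ obstruction for compact K\"ahler manifolds), exhibits the explicit nonzero period of the Lee class, and its intermediate identity $\q=\chi(r)\,\diff r$ with $\chi=2(m-1)\big(\tfrac{h'}{h}+\tfrac{mn}{p}\tfrac{f}{h^2}\big)$ is consistent with Equation \eqref{theta} and with the Gauduchon computation in Proposition \ref{prop:gauduchon}. Your parenthetical construction of the explicit primitive $\Phi$ in the singular-orbit cases is a nice additional check (the parity of $\chi$ forced by the boundary behaviour of $f$ and $h$ makes $\Phi$ admissible in the sense of Remark \ref{admissible}), though it is not needed once $b_1=0$ is known.
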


\begin{proof}
Since the complex manifolds $(M_{(2,n)}(\fG,\fK), \bJ)$ and $(M_{(4,n)}(\fG,\fK), \bJ)$ are simply connected, any closed $1$-form on them is necessarily exact. Moreover, by Remark \ref{rem:noK}, it holds that $b_1\big(M_{(1,n)}(\fG,\fK)\big)=0$. Therefore, any locally conformally K\"ahler metrics on them are necessarily globally conformally K\"ahler. Finally, by using again Remark \ref{rem:noK}, the complex manifolds $(M_{(3,n)}(\fG,\fK), \bJ)$ do not admit any K\"ahler metric.
\end{proof}

Let now $\bg$ be a cohomogeneity one, submersion-type Hermitian metric on a complex manifold $(M_{(i,n)}(\fG,\fK),\bJ)$. By Proposition \ref{prop:LCK}, it follows that $\diff\omega=\tfrac{1}{m-1}\q\wedge\omega$ which in turn implies that $\eL_A\q=0$ for any holomorphic Killing vector field $A \in \Gamma(TM)$.
Hence, by Equations \eqref{defN}, \eqref{eq:metric} and Proposition \ref{prop:lc-connetion}, the non-vanishing components of the Levi-Civita covariant derivative of $\q$ are
\begin{equation} \label{LCtheta} \begin{aligned}
(D_N\q)(N)_{\gamma(r)} &= \tfrac{2mn}{\l p}\big(f(r) \tfrac{\p}{\p r}(\q(N)_{\gamma(r)})-f'(r)\q(N)_{\gamma(r)}\big) \,\, , \\
(D_{T^*}\q)(T^*)_{\gamma(r)} &= \tfrac{2mn}{\l p}f'(r)\q(N)_{\gamma(r)} \,\, , \\
(D_{Y^*}\q)(X^*)_{\gamma(r)} &= \tfrac{\l p}{2mn} \tfrac{h(r)h'(r)}{f(r)}Q(X,Y)\q(N)_{\gamma(r)} \,\, ,
\end{aligned} \end{equation}
where $\lambda$ is given by the Equation \eqref{lambda}. By Corollary \ref{globCK}, the complex manifolds $(M_{(1,n)}(\fG,\fK),\bJ)$, $(M_{(2,n)}(\fG,\fK),\bJ)$ and $(M_{(4,n)}(\fG,\fK),\bJ)$ cannot admit cohomogeneity one, Hermitian metric of submersion-type that are Vaisman. Moreover, from Equation \eqref{LCtheta}, we get

\begin{prop}\label{prop:vaisman}
A cohomogeneity one Hermitian metric $\bg$ of submersion-type on the complex manifolds $(M_{(3,n)}(\fG,\fK),\bJ)$ is Vaisman if and only if both $f$, $h$ are constant.
\end{prop}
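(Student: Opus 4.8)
The plan is to invoke the characterization of the Vaisman condition directly: a strictly locally conformally K\"ahler metric is Vaisman precisely when its Lee form $\q$ is parallel for the Levi-Civita connection $D$. Since Proposition \ref{prop:LCK} already tells us $\bg$ is locally conformally K\"ahler and Corollary \ref{globCK} confirms that on $M_{(3,n)}(\fG,\fK)$ it is strictly so, it suffices to determine when $D\q \equiv 0$. Here Equation \eqref{LCtheta} does essentially all the work: it lists all the (a priori) non-vanishing components of $D\q$ along the geodesic $\g$, namely $(D_N\q)(N)$, $(D_{T^*}\q)(T^*)$ and $(D_{Y^*}\q)(X^*)$, and by $\fG$-invariance these suffice to control $D\q$ everywhere.

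First I would recall from Equation \eqref{theta} that $\q(N)_{\g(r)} = \tfrac{4m(m-1)n}{\l p}\tfrac{f(r)}{h(r)^2}\big(h(r)h'(r)+\tfrac{mn}{p}f(r)\big)$, and observe that the middle equation of \eqref{LCtheta} forces $f'(r)\q(N)_{\g(r)} = 0$, the third forces $h(r)h'(r)\q(N)_{\g(r)} = 0$ (since $f(r), Q(X,Y)$ can be taken nonzero), and the first then forces $f(r)\tfrac{\p}{\p r}\q(N)_{\g(r)} = 0$. Since $f$ is strictly positive, the first relation reduces to $\q(N)$ being constant in $r$. Conversely, if $f$ and $h$ are both constant then $\q(N)_{\g(r)} = \tfrac{4m(m-1)n}{\l p}\tfrac{mn}{p}\tfrac{f^2}{h^2}$ is a nonzero constant (note $\q \not\equiv 0$ precisely because $\bg$ is not K\"ahler, using \eqref{eq:kahler}), and every right-hand side in \eqref{LCtheta} vanishes, so $D\q = 0$ and $\bg$ is Vaisman.

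The remaining direction is to show that $D\q = 0$ \emph{forces} $f$ and $h$ constant. From the vanishing of the three components: the second equation gives $f' \q(N) = 0$; if $\q(N)$ vanished identically then $\bg$ would satisfy \eqref{eq:kahler}, hence be K\"ahler, which is excluded on $M_{(3,n)}(\fG,\fK)$ by Remark \ref{rem:noK} --- so $\q(N)$ is not identically zero, and being (by the first equation) constant in $r$, it is a nonzero constant. Then $f' \equiv 0$, i.e.\ $f$ is constant, and the third equation gives $h h' \equiv 0$, hence $h$ is constant as well. This closes the equivalence.

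The only mild subtlety --- and the step I would be most careful about --- is ensuring $\q$ is genuinely nonzero, so that we may cancel $\q(N)$ from the relations rather than landing in the trivial K\"ahler case; this is exactly where the non-existence of K\"ahler metrics on $M_{(3,n)}(\fG,\fK)$ (Remark \ref{rem:noK}) is essential, and it is also why the statement is specific to the case $i=3$. Everything else is a direct read-off from \eqref{LCtheta} and \eqref{theta}, together with the positivity of $f$ and $h$.
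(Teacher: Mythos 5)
Your proof is correct and follows essentially the same route as the paper: both directions are read off from the explicit components of $D\q$ in Equation \eqref{LCtheta}, with the key point being that $\q(N)$ cannot vanish identically (else $\bg$ would be K\"ahler, which is impossible on $M_{(3,n)}(\fG,\fK)$), so that $f'$ and $hh'$ must vanish. The only cosmetic difference is that the paper phrases the dichotomy as ``$\q(N)$ vanishes somewhere versus nowhere'' and uses the first equation of \eqref{LCtheta} to propagate a zero, whereas you combine the first two equations to show $\q(N)$ is constant; both are valid.
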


\begin{proof}
The {\itshape if part} is immediate. Indeed, since $\q$ is necessarily non-exact, if both $f$ and $h$ are constant, then by Equations \eqref{theta} and \eqref{LCtheta} it follows that $D\q=0$.

Conversely, assume that $D\q=0$. Notice that either $\q(N)_{\g(r_{\zero})}=0$ for some $r_{\zero}\in I$, or $\q(N)_{\gamma(r)}$ is nowhere vanishing. In the former case, the first equation in \eqref{LCtheta} yields that $\q(N)_{\g(r)}$ is constantly zero. In fact, one gets that $\q=0$, that is, $\bg$ is K\"ahler. In particular, if $\bg$ is Vaisman, then the above observation implies that $D\q=0$ and $\q(N)_{\g(r)}$ is nowhere vanishing. Hence, Equations \eqref{LCtheta} immediately imply that $f$ and $h$ are constant. 
\end{proof}

Finally, we note that the manifolds $(M_{(1,n)}(\fG,\fK),\bJ)$ (respectively $(M_{(3,n)}(\fG,\fK),\bJ)$) are acted transitively by the larger group $\fG\times\bR$ (respectively $\fG\times\fU(1)$), and any metric $\bg=\bg(f,h)$ is invariant under this action if and only if the functions $f$ and $h$ are constant. This completes the proof of Theorem \ref{thm:a}.

\begin{remark}
By Proposition \ref{prop:LCK} and Proposition \ref{prop:vaisman}, the compact complex manifolds $(M_{(3,n)}(\fG,\fK),\bJ)$ admit cohomogeneity one, strictly locally conformally K\"ahler metrics that are non-Vaisman. Remarkably, in the homogeneous case, this is excluded by \cite{gauduchon-moroianu-ornea, hasegawa-kamishima}. We stress that the Hopf manifold, which is the main example of Vaisman manifold \cite{ornea-verbitsky-1}, corresponds, in our notation, to $(M_{(3,1)}(\fG,\fK),\bJ)$ with $\fG=\fSU(m)$ and $\fK=\fS(\fU(1)\times\fU(m{-}1))$, see Example \ref{ex:hopf}.
\end{remark}

\begin{remark}
The locally conformally K\"ahler metrics by Proposition \ref{prop:LCK} include the globally conformally K\"ahler, Einstein metrics by B\'erard-Bergery \cite[Th\'eor\`eme 1.10]{berard-bergery}. We also recall that Einstein, locally conformally K\"aher, non-K\"ahler metrics are completed classified by \cite{lebrun, derdzinski-maschler, madani-moroianu-pilca}, and they are either the Einstein, globally conformally K\"ahler metrics by \cite{berard-bergery}, or they are defined on $\bC \bP^2$ by blowing up one or two points.
\end{remark}

\subsection{Proof of Theorem \ref{thm:b}}\label{sec:proof-thm-b} \hfill \par

We begin by characterizing the Gauduchon condition as follows.

\begin{prop} \label{prop:gauduchon}
A cohomogeneity one Hermitian metric $\bg$ of submersion-type on the complex manifolds $(M_{(i,n)}(\fG,\fK),\bJ)$ is Gauduchon if and only if it satisfies
\begin{equation} \label{eq:gauduchon}
\big(h(r)h'(r)+\tfrac{mn}{p} f(r)\big)f(r)h(r)^{2(m-2)}=k \quad \text{ for some $k \in \bR$.}
\end{equation}
Moreover, if $(M_{(i,n)}(\fG,\fK),\bJ)$ has singular orbits, then $\bg$ is Gauduchon if and only if it is K\"ahler.
\end{prop}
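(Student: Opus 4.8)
The plan is to compute the codifferential $\diff^*\q$ of the Lee form explicitly and read off when it vanishes. Recall from the discussion leading to Equation~\eqref{LCtheta} that $\bg$ is locally conformally K\"ahler (Proposition~\ref{prop:LCK}), so along the geodesic $\g(r)$ the only non-vanishing components of the Levi-Civita covariant derivative $D\q$ are the three recorded in~\eqref{LCtheta}. Since $\diff^*\q = -\operatorname{tr}_{\bg}(D\q)$, I would evaluate this trace against the $\bg$-orthonormal frame $\{F^{-1}N,\ F^{-1}T^*,\ h(r)^{-1}X_1^*,\dots,h(r)^{-1}X_{2(m-1)}^*\}$ along $\g(r)$, where $\{X_j\}_{j=1}^{2(m-1)}$ is a $Q_{\gp}$-orthonormal basis of $\gp$ and $F = \tfrac{2mn}{\l p}f$ by~\eqref{defN}, \eqref{eq:metric} and~\eqref{lambda}. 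The two terms proportional to $f'\q(N)$ coming from the $N$- and $T^*$-directions cancel, and using $\dim\gp = 2(m-1)$ for the summation over the $\gp$-directions one gets
\[
\diff^*\q = -\frac{\l p}{2mn\,f(r)}\Big(\tfrac{\p}{\p r}\q(N)_{\g(r)} + 2(m-1)\tfrac{h'(r)}{h(r)}\q(N)_{\g(r)}\Big) = -\frac{\l p}{2mn\,f(r)\,h(r)^{2(m-1)}}\tfrac{\p}{\p r}\!\left(h(r)^{2(m-1)}\q(N)_{\g(r)}\right).
\]

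Since $f>0$ and $h>0$ on the interior $I$, and since $M^{\rm reg}$ is dense, the Gauduchon condition $\diff^*\q=0$ is equivalent to $h^{2(m-1)}\q(N)$ being constant on $I$. Plugging in the explicit expression~\eqref{theta} for $\q(N)$ and dividing out the nonzero constant $\tfrac{4m(m-1)n}{\l p}$, this is precisely Equation~\eqref{eq:gauduchon} (with $k$ the corresponding constant value). This proves the first assertion.

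For the last statement, suppose $(M_{(i,n)}(\fG,\fK),\bJ)$ has singular orbits, that is $i\in\{2,4\}$. The \emph{if} direction is immediate, since a K\"ahler metric has $\diff\w=0$, hence $\q=0$ and $\diff^*\q=0$. Conversely, if $\bg$ is Gauduchon then~\eqref{eq:gauduchon} holds for some $k\in\bR$; evaluating its left-hand side at a singular orbit, where the smoothness conditions imposed on $f$ force it to vanish while $h$ and $h'$ stay bounded, forces $k=0$. As $f(r)h(r)^{2(m-2)}>0$ on $I$, Equation~\eqref{eq:gauduchon} with $k=0$ reduces to $h(r)h'(r)+\tfrac{mn}{p}f(r)=0$, which by~\eqref{eq:kahler} says exactly that $\bg$ is K\"ahler.

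The step requiring the most care is the trace computation: getting the normalising factors of the orthonormal frame right (the factor $F$ on the two $\ga$-directions $N$, $T^*$ and the factor $h$ on the $2(m-1)$ $\gp$-directions) and checking that the $N$-, $T^*$- and $\gp$-contributions collapse into the single total-derivative expression above. Everything after that is a direct substitution of~\eqref{theta}, together with the elementary remark that the presence of a singular orbit pins the Gauduchon constant to zero.
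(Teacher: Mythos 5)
Your proof is correct and follows essentially the same route as the paper: both compute $\diff^{\,*}\q$ by tracing the covariant derivative \eqref{LCtheta} over the orthonormal frame $\{F^{-1}N, F^{-1}T^*, h^{-1}\tilde e_\a^*\}$, observe the cancellation of the $f'\q(N)$ terms, recognize the result as a total derivative, and pin the constant to $k=0$ at a singular orbit via the smoothness conditions on $f$. The only (cosmetic) difference is that you identify the total-derivative structure at the level of $h^{2(m-1)}\q(N)$ before substituting \eqref{theta}, whereas the paper substitutes first and then exhibits the derivative of $\big(hh'+\tfrac{mn}{p}f\big)fh^{2(m-2)}$.
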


\begin{proof} Let $(\tilde{e}_{\a})$ be a $Q_{\gp}$-orthonormal basis for $\gp$. Then, a straightforward computation based on Equation \eqref{LCtheta} yields
\begin{align*}
\diff{}^*\q(r) &= -\big(\tfrac{\l p}{2mn}\big)^2f(r)^{-2}(D_N\q)(N)_{\g(r)} -\big(\tfrac{\l p}{2mn}\big)^2f(r)^{-2}(D_{T^*}\q)(T^*)_{\g(r)} -h(r)^{-2}\sum_{\a=1}^{2(m-1)}(D_{\tilde{e}_{\a}^*}\q)(\tilde{e}_{\a}^*)_{\g(r)} \\
&= -\tfrac{\l p}{2mn}f(r)^{-1} \tfrac{\p}{\p r}(\q(N)_{\gamma(r)}) -(m-1)\tfrac{\l p}{mn}\tfrac{h'(r)}{f(r)h(r)}\q(N)_{\gamma(r)}\\
&= 2(m-1)\tfrac1{h(r)^2}\Big(\big(h(r)h'(r)+\tfrac{mn}p f(r)\big)\Big(\tfrac{f'(r)}{f(r)}+2(m-2)\tfrac{h'(r)}{h(r)}\Big) +\big(h(r)h''(r)+h'(r)^2+\tfrac{mn}p f'(r)\big)\Big)
\end{align*} and so $\bg$ is Gauduchon if and only if
$$
\big(h(r)h'(r)+\tfrac{mn}{p} f(r)\big)\Big(\tfrac{f'(r)}{f(r)}+2(m-2)\tfrac{h'(r)}{h(r)}\Big) +\big(h(r)h''(r)+h'(r)^2+\tfrac{mn}{p} f'(r)\big)=0 \,\, .
$$
Since $f(r), h(r)$ are positive for any $r \in I$ and
\begin{multline*}
\tfrac1{f(r)h(r)^{2(m-2)}}\tfrac{\diff}{\diff r}\big(\big(h(r)h'(r)+\tfrac{mn}{p} f(r)\big)f(r)h(r)^{2(m-2)}\big)  = \\
\quad \big(h(r)h'(r)+\tfrac{mn}{p} f(r)\big)\Big(\tfrac{f'(r)}{f(r)}+2(m-2)\tfrac{h'(r)}{h(r)}\Big) +\big(h(r)h''(r)+h'(r)^2+\tfrac{mn}{p} f'(r)\big) \,\, ,
\end{multline*}
it follows that $\bg$ is Gauduchon if and only if Equation \eqref{eq:gauduchon} is satisfied.

Let us assume now that $(M_{(i,n)}(\fG,\fK),\bJ)$ has a singular orbit, that is $i=2$ or $i=4$. Then, the smoothness conditions at $r=0$ imply that $k=0$ in Equation \eqref{eq:gauduchon}. Therefore, in this case, $\bg$ is Gauduchon if and only if it is K\"ahler.
\end{proof}

Since the balanced condition is equivalent to $\q=0$, from Proposition \ref{propLee} and Equation \eqref{eq:kahler} we immediately get

\begin{cor}\label{cor:balanced}
A cohomogeneity one Hermitian metric $\bg$ of submersion-type on the complex manifolds $(M_{(i,n)}(\fG,\fK),\bJ)$ is balanced if and only if it is K\"ahler.
\end{cor}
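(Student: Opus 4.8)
The plan is to reduce the balanced condition to the vanishing of the Lee form $\q$ and then to read off the conclusion by comparing the explicit formula of Proposition \ref{propLee} with the K\"ahler characterization \eqref{eq:kahler}. First I would recall the classical fact that a Hermitian metric is balanced, i.e.\ $\diff{}^*\w=0$, if and only if $\diff\w^{m-1}=0$ (see \cite{michelsohn}); together with the identity $\diff\w^{m-1}=\q\wedge\w^{m-1}$ recalled right before Corollary \ref{cordw}, and with the fact that $\w$ is a nondegenerate $2$-form (so that the pointwise operator $\eta\mapsto\eta\wedge\w^{m-1}$ is injective on $1$-forms, this being the form-level hard Lefschetz isomorphism $\Lambda^1\to\Lambda^{2m-1}$, valid since $m\geq 2$), this shows that on our manifolds $\bg$ is balanced precisely when $\q$ vanishes identically.

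Next I would substitute the expression from Proposition \ref{propLee}. Along the geodesic $\g$ one has $\q(T^*)_{\g(r)}=\q(X^*)_{\g(r)}=0$ for every $X\in\gp$, while
$$
\q(N)_{\g(r)}=\tfrac{4m(m-1)n}{\l p}\,\tfrac{f(r)}{h(r)^2}\Big(h(r)h'(r)+\tfrac{mn}{p}f(r)\Big).
$$
Since $\{N_{\g(r)},T^*_{\g(r)}\}$ together with $\{X^*_{\g(r)}:X\in\gp\}$ spans $T_{\g(r)}M$, and since $\fG$ acts by isometries so that $\q$ is $\fG$-invariant, the identity $\q\equiv 0$ is equivalent to $\q(N)_{\g(r)}=0$ for all $r\in I$. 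As $f,h>0$ on $I$ and the numerical prefactor is nonzero, this is in turn equivalent to $h(r)h'(r)+\tfrac{mn}{p}f(r)=0$ for every $r\in I$, which by \eqref{eq:kahler} is exactly the condition for $\bg=\bg(f,h)$ to be K\"ahler. The reverse implication is immediate, since a K\"ahler form satisfies $\diff\w=0$, hence $\diff\w^{m-1}=0$, hence $\diff{}^*\w=0$; this establishes the equivalence.

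I do not expect any real obstacle here: the statement is genuinely an immediate consequence of the torsion computation in Proposition \ref{propLee} and of \eqref{eq:kahler}. The only slightly non-formal ingredient is the reduction ``balanced $\Leftrightarrow\q=0$'', which is trivial for $m=2$ (where $\w^{m-1}=\w$) and is the standard pointwise hard-Lefschetz injectivity in top degree for $m>2$.
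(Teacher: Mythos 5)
Your proof is correct and follows essentially the same route as the paper, which simply notes that balanced is equivalent to $\q=0$ and then reads off the conclusion from Proposition \ref{propLee} and Equation \eqref{eq:kahler}. The extra details you supply (balanced $\Leftrightarrow \diff\w^{m-1}=0$, the identity $\diff\w^{m-1}=\q\wedge\w^{m-1}$, and the pointwise injectivity of $\eta\mapsto\eta\wedge\w^{m-1}$ on $1$-forms) correctly justify the reduction the paper takes for granted.
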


\begin{remark}
In particular, in view of \cite[Theorem 1.3]{yang-zheng}, in the non-K\"ahler case, both the Levi-Civita and the Chern connections cannot be {\em K\"ahler-like} in the sense of \cite{gray, yang-zheng}, namely, they do not satisfy the same symmetries as in the K\"ahler case.
\end{remark}

Concerning the pluriclosed condition, a tedious but straightforward computation (see Appendix \ref{A}) shows that 
\begin{equation} \label{ddcw}
\diff\diff^{\,c}\!\w(X^*,Y^*,Z^*,W^*)_{\g(r)} = 4\tfrac{mn}{p}f(r)\big(h(r)h'(r)+ \tfrac{mn}{p}f(r)\big)(\rho{\wedge}\rho)(X,Y,Z,W) \,\, ,
\end{equation}
where $\rho(X,Y) = Q_{\gp}(JX,Y)$ is again the $\fG$-invariant K\"ahler-Einstein form on $P$. Hence, together with Equation \eqref{eq:kahler}, this proves the following 

\begin{cor}\label{cor:skt}
A cohomogeneity one Hermitian metric $\bg$ of submersion-type on the complex manifolds $(M_{(i,n)}(\fG,\fK),\bJ)$ is pluriclosed if and only if it is K\"ahler.
\end{cor}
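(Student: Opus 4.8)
The plan is to read the corollary off directly from \eqref{ddcw}. By definition $\bg$ is pluriclosed precisely when $\diff\diff^{\,c}\w\equiv 0$, and since $\bg$, $\bJ$, and hence $\diff\diff^{\,c}\w$, are $\fG$-invariant, it suffices to test this along the geodesic $\g$, where $TM$ splits $\bg$-orthogonally as $\bR N\oplus\bR T^*\oplus\gp$. One implication is immediate: if $\bg$ is K\"ahler then $\diff\w=0$, hence $\diff\diff^{\,c}\w=0$, so $\bg$ is pluriclosed.

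For the converse I would use only the restriction of $\diff\diff^{\,c}\w$ to $\gp$, namely \eqref{ddcw}. If $\diff\diff^{\,c}\w\equiv 0$, then in particular
\[
\tfrac{mn}{p}\,f(r)\Big(h(r)h'(r)+\tfrac{mn}{p}f(r)\Big)(\rho\wedge\rho)(X,Y,Z,W)=0\qquad\text{for all }X,Y,Z,W\in\gp,\ r\in I,
\]
where $\rho(X,Y)=Q_\gp(JX,Y)$ is the invariant K\"ahler--Einstein form of $P=\fG/\fK$. Now $\rho$ is non-degenerate on $\gp$ — it is $Q_\gp$ precomposed with the invertible endomorphism $J$ — so $\rho\wedge\rho$ is a non-zero element of $\Lambda^4\gp^*$ (this is where $\dim_{\bR}\gp=2(m-1)\ge 4$, i.e.\ $m\ge 3$, enters); since moreover $f(r)>0$ on all of $I$ and $\tfrac{mn}{p}\neq 0$, we conclude that $h(r)h'(r)+\tfrac{mn}{p}f(r)=0$ for every $r\in I$, which by \eqref{eq:kahler} is exactly the condition that $\bg=\bg(f,h)$ be K\"ahler. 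Incidentally, once this factor vanishes $\bg$ is already K\"ahler and $\diff\diff^{\,c}\w$ vanishes in full, so there is no need to inspect any other component of $\diff\diff^{\,c}\w$.

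Thus the corollary is a one-line consequence once \eqref{ddcw} is available, and the only genuine obstacle is proving \eqref{ddcw} itself, carried out in Appendix \ref{A}. I would derive it starting from $\diff\w$ in Corollary \ref{cordw}: that $3$-form is supported only on triples $(X^*,Y^*,N)$ and equals, up to a function of $r$, the wedge of $\diff r$ with the closed $\fG$-invariant $2$-form $\rho$; one then computes $\diff^{\,c}\w=\bJ^{-1}\diff\bJ\w=\bJ^{-1}\diff\w$ using $\bJ N=T^*$ and differentiates once more. The delicate bookkeeping is in how the $r$-derivatives of the warping functions $f,h$ combine with the curvature of the homogeneous circle bundle $\fK/\fH\to\fG/\fH\to\fG/\fK$ — whose curvature form is essentially $\rho$ — and this is what produces both the scalar factor $f\big(hh'+\tfrac{mn}{p}f\big)$ and the term $\rho\wedge\rho$. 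As a consistency check I would re-derive \eqref{ddcw} by differentiating the locally conformally K\"ahler identity $\diff\w=\tfrac1{m-1}\q\wedge\w$ of Proposition \ref{prop:LCK} directly, using the explicit Lee form \eqref{theta}.
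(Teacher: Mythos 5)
Your argument for $m\ge 3$ is exactly the paper's: the proof of Corollary \ref{cor:skt} consists of displaying \eqref{ddcw} and invoking \eqref{eq:kahler}, which is precisely your observation that $f>0$, $\tfrac{mn}{p}\neq 0$ and $\rho\wedge\rho\neq 0$ force $h(r)h'(r)+\tfrac{mn}{p}f(r)\equiv 0$. Your sketch of how to obtain \eqref{ddcw} also matches Appendix \ref{A} in spirit (the formula for $\diff\diff^{\,c}\w$ evaluated on holomorphic Killing fields), and the cross-check against the locally conformally K\"ahler identity is a sensible addition.

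However, your own parenthesis --- ``this is where $\dim_{\bR}\gp\ge 4$, i.e.\ $m\ge 3$, enters'' --- flags a genuine gap that you do not close (and which the paper's one-line deduction shares). The construction allows $m=2$ (e.g.\ $P=\bC\bP^1$, giving the Hopf surface and the Hirzebruch surfaces of Example \ref{ex:hopf}), and there $\dim_{\bR}\gp=2$, so $\rho\wedge\rho\equiv 0$ and \eqref{ddcw} reads $0=0$: the pure-$\gp$ component of the $4$-form $\diff\diff^{\,c}\w$ carries no information, and the relevant component is $\diff\diff^{\,c}\w(N,T^*,X^*,(JX)^*)$, which is not computed. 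On a complex surface $\diff\diff^{\,c}\w=\diff\diff^{\,c}\w^{\,m-1}$, so pluriclosed coincides with Gauduchon, and by Proposition \ref{prop:gauduchon} this is condition \eqref{eq:gauduchon}, namely $\big(hh'+\tfrac{mn}{p}f\big)f=k$ constant; this forces the K\"ahler condition only in the presence of a singular orbit ($i=2,4$, where $k=0$). For $i=1,3$ and $m=2$ the constant functions $f,h$ satisfy \eqref{eq:gauduchon} with $k=\tfrac{mn}{p}f^2\neq 0$, so the standard homogeneous metric on the Hopf surface is pluriclosed but not K\"ahler. Hence for $m=2$ your argument must be replaced by the Gauduchon characterization (and the statement itself requires a caveat in the cases $i=1,3$); as written, your proof, like the paper's, is complete only for $m\ge 3$.
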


\noindent which completes the proof of Theorem \ref{thm:b}.

\section{Constant Chern-scalar curvature and second-Chern-Einstein metrics} \label{sec:chern-einstein}
\setcounter{equation} 0

In this section, we investigate the existence of second-Chern-Einstein metrics and of metrics with constant Chern-scalar curvature on the B\'erard-Bergery standard cohomogeneity one Hermitian manifolds. In particular, we first prove a local existence and uniqueness result for second-Chern-Einstein metrics with prescribed Chern-scalar curvature. Then, we prove Theorem \ref{thm:c} and Theorem \ref{thm:d}.

\subsection{The second-Chern-Einstein equations} \hfill \par

Let $(M_{(i,n)}(\fG,\fK),\bJ,\bg)$ be a B\'erard-Bergery standard cohomogeneity one Hermitian manifold and fix a unit speed geodesic $\g:\ol{I} \to M$ which intersects orthogonally any $\fG$-orbit. Then, by means of Proposition \ref{prop:chern-ricci}, the second-Chern-Einstein equation
$$
\Ric^{{\rm Ch}[2]}(\bg)=\tfrac{\l}{2m}\bg
$$
becomes
\begin{equation} \label{IICE} \left\{\begin{array}{lcl}
-\tfrac{f''(r)}{f(r)} +\tfrac{2m(m-1)n}{p}\tfrac{f'(r)}{h(r)^2} +\tfrac{2m^2(m-1)n^2}{p^2}\tfrac{f(r)^2}{h(r)^4} \!\!&=&\!\! \tfrac{\l(r)}{2m} \\
-\tfrac{h''(r)}{h(r)} +\tfrac{h'(r)^2}{h(r)^2} -\tfrac{f'(r)}{f(r)}\tfrac{h'(r)}{h(r)} +\tfrac{2m(m-1)n}{p}f(r)\tfrac{h'(r)}{h(r)^3} -2\big(\tfrac{mn}{p}\big)^2\tfrac{f(r)^2}{h(r)^4} +\tfrac{2m}{h(r)^2} \!\!&=&\!\! \tfrac{\l(r)}{2m}
\end{array}\right. \,\, ,\end{equation}
where $\lambda: I \to \bR$ is an admissible function (see Remark \ref{admissible}). Notice that, in this case, it holds that $\l=\scal^{\rm Ch}(\bg)$. \smallskip

Our first result in this section concerns the local existence and uniqueness of second-Chern-Einstein metrics, with prescribed Chern-scalar curvature, in a neighborhood of a singular orbit. More precisely

\begin{theorem} \label{thm:loc}
Assume that the complex manifold $(M_{(i,n)}(\fG,\fK),\bJ)$ has a singular orbit, corresponding to the value $r=0$ of the orthogonal geodesic $\g$. For any constant $a \in \bR_{>0}$ and for any admissible function $\l: I \to \bR$, there exist $\e>0$ and two smooth functions $f,h : [0,\e) \to \bR$ satisfying the following conditions: \begin{itemize}
\item[I)] $f,h$ solve the second-Chern-Einstein Equations \eqref{IICE};
\item[II)] $f,h$ determine a smooth Hermitian metric $\bg=\bg(f,h)$ on the open set
$$
\cU^{\rm reg}_{\e} \= \bigcup_{r \in (0,\e)} \fG \cdot \g(r) \subset M_{(i,n)}(\fG,\fK)^{\rm reg}
$$
which extends smoothly over the singular orbit $\fG \cdot \g(0)$;
\item[III)] $h(0)=a$ and the Chern-scalar curvature of $\bg$ verifies $\scal^{\rm Ch}(\bg)(r)=\l(r)$ for any $r \in [0,\e)$.
\end{itemize}
Moreover, it holds that: \begin{itemize}
\item[$\bcdot$] if there exist $\tilde{\e}\geq \e$ and $\tilde{f},\tilde{h} : [0,\tilde{\e}) \to \bR$ satisfying the conditions (I), (II), (III) above, then $\tilde{f}(r)=f(r)$ and $\tilde{h}(r)=h(r)$ for any $r \in [0,\e)$;
\item[$\bcdot$] $f,h$ depends continuously on the data $a, \l$. 
\end{itemize}
\end{theorem}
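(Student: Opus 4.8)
The plan is to regard the second-Chern-Einstein system \eqref{IICE}, with the prescribed right-hand side $\tfrac{\l(r)}{2m}$, as a \emph{singular} initial value problem and to solve it by the existence-uniqueness theorem of Malgrange \cite{malgrange}, in the form employed by Eschenburg-Wang \cite{eschenburg-wang} and B\"ohm \cite{boehm98, boehm99}. First I would record what the smoothness conditions at the singular orbit (Section~\ref{sec:construction}, case $i=2$ or $i=4$, at $r=0$) force on a solution: $f$ is odd with $f(0)=0$ and $f'(0)=1$, $h$ is even with $h(0)=a$ and $h'(0)=0$, and the admissible function $\l$ is even near $0$. Thus, apart from the normalizations $f(0)=0$, $f'(0)=1$, $h'(0)=0$ imposed by smoothness, the single free datum is the constant $a=h(0)$, exactly as prescribed; moreover condition (III) follows once (I) and (II) hold, because $h(0)=a$ is built into the initial datum and, as observed just after \eqref{IICE}, any solution of that system has $\scal^{\rm Ch}(\bg)=\l$. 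Finally, a pair $(f,h)$ smooth on $[0,\e)$, with the stated boundary behaviour and with $f,h>0$ on $(0,\e)$ (automatic for $\e$ small, since $f(r)\sim r$ and $h(0)=a>0$), defines by construction a Hermitian metric $\bg(f,h)$ on $\cU^{\rm reg}_{\e}$ extending smoothly over $\fG\cdot\g(0)$; so the whole statement reduces to producing a smooth pair $(f,h)$ on $[0,\e)$ solving \eqref{IICE} with $h(0)=a$ and the correct parities.

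The key preliminary step is to desingularize \eqref{IICE}. Solving it for $f''$ and $h''$, one obtains a system whose right-hand side, although bounded along the relevant solutions, is not a smooth function of the phase-space variables at $r=0$: the term $\tfrac{f'}{f}\tfrac{h'}{h}$ in the $h''$-equation contains $\tfrac{f'}{f}$, which is regular only where $f\neq0$. Writing $f(r)=r\,\tilde f(r)$ with $\tilde f$ smooth, even and $\tilde f(0)=1$, so that $\tfrac{f'}{f}=\tfrac1r+\tfrac{\tilde f'}{\tilde f}$, and passing to first-order unknowns collecting $\tilde f$, $h$ and suitably rescaled derivatives, the system takes the Fuchsian normal form
$$ r\,y'(r)=\mathsf A\,y(r)+r\,\Phi\big(r,y(r)\big), $$
with $\mathsf A$ a constant (indicial) matrix and $\Phi$ smooth near $(0,y_0)$; the datum $a$ and the Taylor coefficients of $\l$ at $0$ enter only through $y_0$ and $\Phi$.

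Malgrange's theorem, in the Eschenburg-Wang / B\"ohm formulation, then applies as soon as the eigenvalues of $\mathsf A$ satisfy its nonresonance hypotheses: for some $\e>0$ it yields a unique solution $y$ of the displayed system that is smooth on $[0,\e)$ with $y(0)=y_0$, depending continuously---indeed smoothly---on the parameters, here $a$ and $\l$. Translating back gives smooth $f,h:[0,\e)\to\bR$ with properties (I)-(III); the uniqueness part of the theorem, combined with a standard continuation argument on the regular part $(0,\e)$, gives the comparison with any solution $\tilde f,\tilde h$ on a larger interval $[0,\tilde\e)$; and continuous dependence on $a,\l$ is part of the conclusion.

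It then remains to upgrade ``$C^\infty$ on $[0,\e)$'' to the smoothness conditions of Section~\ref{sec:construction}: for this, observe that \eqref{IICE} is preserved when $\big(r,f(r),h(r),\l(r)\big)$ is replaced by $\big(-r,-f(-r),h(-r),\l(-r)\big)$, and that $\l$ is even, so $r\mapsto\big(-f(-r),h(-r)\big)$ solves the same problem on $(-\e,0]$ with the same data at $r=0$; by uniqueness $f$ extends to a smooth \emph{odd} function and $h$ to a smooth \emph{even} function on $(-\e,\e)$, whence $\bg(f,h)$ extends smoothly over the singular orbit. I expect the real obstacle to be the passage to normal form: computing the indicial matrix $\mathsf A$ explicitly---its eigenvalues being presumably small nonnegative integers reflecting the collapse of the circle direction and the flag-manifold structure of the base $P$---and checking the nonresonance condition. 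Once that is settled, the remaining points (smoothness of $y$, the parity argument, smooth extension across the singular orbit, continuous dependence) are routine bookkeeping.
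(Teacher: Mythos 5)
Your proposal follows essentially the same route as the paper: the desingularizing substitution $f(r)=r\,\tilde f(r)$ (the paper uses $x=f/r$), the reduction to a first--order singular system $v'(r)=\tfrac1r A\cdot v(r)+N(r,v(r))$ with initial datum $v_{\zero}=(1,0,a,0)^t$, the application of Malgrange's theorem in the Eschenburg--Wang/B\"ohm formulation, and the parity--plus--uniqueness argument showing that $f$ extends to an odd and $h$ to an even smooth function, which gives the smooth extension of $\bg(f,h)$ over the singular orbit. The one step you explicitly leave open --- computing the indicial matrix and checking the hypotheses --- is in fact benign: in the variables $(x,y,z,w)=\big(\tfrac{f}{r},(\tfrac{f}{r})',h,h'\big)$ one finds $A=\operatorname{diag}(0,-2,0,-1)$, whose eigenvalues are nonpositive integers (not nonnegative, as you guessed), so that $\det(A-k\Id_4)\neq0$ for every integer $k\geq1$; moreover $A\cdot v_{\zero}=0$ because $v_{\zero}$ is supported on the kernel coordinates, and $N$ is smooth near $(0,v_{\zero})$ once $\tfrac{f'}{f}\tfrac{h'}{h}$ is rewritten as $\tfrac{yw}{x}$ up to regular terms --- exactly the hypotheses of \cite[Theorem 7.1]{malgrange} and \cite[Theorem 2.2]{boehm98}.
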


\begin{proof}
Fix a positive number $a>0$ and an admissible function $\l: I \to \bR$. Let us write
\begin{equation} \label{xtof}
x(r) \= \tfrac{f(r)}{r} \,\, , \quad y(r)\= x'(r) \,\, , \quad z(r) \= h(r) \,\, , \quad w(r) \= h'(r) \,\, .
\end{equation}
Then, a straightforward computation shows that the Equations \eqref{IICE} become
\begin{equation} \label{IICEMal}
\left\{ \begin{array}{l}
v'(r) = \tfrac1r\,A\cdot v(r)+ N(r,v(r)) \\
v(0) = v_{\zero}
\end{array}\right.  \,\, , \end{equation}
with
$$\begin{gathered}
v(r) \= (x(r), y(r), z(r), w(r))^t \,\, , \quad A \= \left(\begin{array}{cccc}
0 & 0 & 0 & 0 \\
0 & -2 & 0 & 0 \\
0 & 0 & 0 & 0 \\
0 & 0 & 0 & -1
\end{array}\right) \,\, , \\
N(r,v) \= \left(\begin{array}{c}
y \\
\big(\tfrac{2m(m-1)n}{p}x^2 -\tfrac{\l}{2m}x\big) +r\big(\tfrac{2m(m-1)n}{p}xy\big) +r^2\big(\tfrac{2m^2(m-1)n^2}{p^2}\tfrac{x^3}{z^4}\big) \\
w \\
\big(\tfrac{w^2}{z} -\tfrac{yw}{x} +\tfrac{2m}{z} -\tfrac{\l}{2m}z\big) +r\big(\tfrac{2m(m-1)n}{p}\tfrac{xw}{z^2}\big) +r^2\big(-\tfrac{2m^2(m-1)n^2}{p^2}\tfrac{x^2}{z^3}\big)
\end{array}\right) \,\, .
\end{gathered}$$
Moreover, the smoothness conditions for the functions $f$ and $h$, together with the equation $h(0)=a$, imply that
\begin{equation} \label{icMal}
v_{\zero} = (1, 0, a, 0)^t \,\, .
\end{equation}
We stress now that the following conditions are satisfied: \begin{itemize}
\item[$\bcdot$] the function $N=N(r,v)$ is smooth in a neighborhood of $(0,v_{\zero})$,
\item[$\bcdot$] $A \cdot v_{\zero} = 0$,
\item[$\bcdot$] $\det(A -k \Id_4) \neq 0$ for any integer $k\geq1$.
\end{itemize}
Then, by the Malgrange Theorem \cite[Theorem 7.1]{malgrange}, see also \cite[Theorem 2.2]{boehm98}, there exists a unique solution $v(r)$, defined on an interval $(-\e,\e)$, to the Equation \eqref{IICEMal} with initial condition \eqref{icMal}, which depends continuously on the data $a$, $\l$.

By Equation \eqref{xtof}, we obtain a pair $(f,h)$ of smooth functions $f,h : (-\e,\e) \to \bR$ which satisfy Equations \eqref{IICE} such that
\begin{equation} \label{CI-IICE}
f(0)=0 \,\, , \quad f'(0)=1 \,\, , \quad f''(0)=0 \,\, , \quad h(0)=a \,\, , \quad h'(0)=0 \,\, .
\end{equation}
Since the pair $(\hat{f},\hat{h})$ of functions defined by
$$
\hat{f},\hat{h} : (-\e,\e) \to \bR \,\, , \quad \hat{f}(r) \= -f(-r) \,\, , \quad \hat{h}(r) \= h(-r)
$$
satisfy Equations \eqref{IICE} with the initial conditions \eqref{CI-IICE}, by uniqueness we conclude that $f$ is odd and $h$ is even. Therefore, these functions give rise to a smooth Hermitian metric $\bg=\bg(f,h)$ on the open set $\cU^{\rm reg}_{\e} \subset M_{(i,n)}(\fG,\fK)^{\rm reg}$, which admits a unique smooth extension over the singular orbit $\fG \cdot \g(0)$. 
\end{proof}

Concerning complete solutions to the Equations \ref{IICE}, we point out the following

\begin{rem} \label{rem:homCE}
Fundamental examples of complete, non-K\"ahler, second-Chern-Einstein metrics can be easily found on the manifolds $(M_{(1,n)}(\fG,\fK),\bJ)$ and $(M_{(3,n)}(\fG,\fK),\bJ)$. Indeed, the constant functions
\begin{equation} \label{hom13}
f(r) \= \tfrac{p}{mn} \,\, , \quad h(r) \= 1
\end{equation}
verify the smoothness conditions in case $i=1,3$ and so they give rise to homogeneous, smooth metrics which are non-K\"ahler by Equation \eqref{eq:kahler} and second-Chern-Einstein by Equation \eqref{IICE} with constant Chern-scalar curvature $\l=4m(m-1)$. These examples include the standard metric on the linear Hopf manifold (see Example \ref{ex:hopf}).
\end{rem}

We also stress that, on manifolds $(M_{(1,n)}(\fG,\fK),\bJ)$ and $(M_{(3,n)}(\fG,\fK),\bJ)$, all the metrics (not necessarily of cohomogeneity one) in the conformal class of $\bg=\bg(f,h)$, with $f,h$ given by Formula \eqref{hom13}, are second-Chern-Einstein (see Remark \ref{rmk:conf-invariant}). In particular, by Remark \ref{confchange}, for any admissible positive function $\phi: I \to \bR$, the pair
\begin{equation} \label{confPHI}
f_{\phi}(r) \= \tfrac{p}{mn}\phi(r) \,\, , \quad h_{\phi}(r) \= \phi(r)
\end{equation}
solve the Equations \eqref{IICE} with Chern-scalar curvature $\l(r) = 2m\big(-\tfrac{\phi''(r)}{\phi(r)}+2(m-1)\tfrac{\phi'(r)+1}{\phi(r)}\big)$. Therefore, in the following, we will focus on B\'erard-Bergery manifolds $(M_{(i,n)}(\fG,\fK),\bJ)$ with singular orbits, namely, the cases $i=2$ and $i=4$.

\subsection{Complete second-Chern-Einstein metrics in case of singular orbits} \hfill \par

In this section, we will construct complete second-Chern-Einstein metrics on the manifolds $(M_{(2,n)}(\fG,\fK),\bJ)$ by using the same technique as \cite[Section 11]{berard-bergery}.

Let us start by noticing that, for the manifold $\cO_{\bC\bP^{m-1}}(-1)$, the functions
\begin{equation} \label{fhktaut}
f(r) \= r \,\, , \quad h_k(r) \= \sqrt{r^2+k^2} \,\, , \quad r \in [0,+\infty) \,\, , \quad k>0
\end{equation}
solve the second-Chern-Einstein Equations \eqref{IICE} and define Hermitian metrics $\bg_k=\bg(f,h_k)$ which extend smoothly over the singular orbit. All the metrics $\bg_k$ are non-K\"ahler and their Chern-scalar curvatures are given by
$$
\scal^{\rm Ch}(\bg_k)(r) = 4m(m-1)\tfrac{2r^2+k^2}{(r^2+k^2)^2} \,\, .
$$
Notice that all these metrics are homothetic, indeed the satisfy $\bg_k = k^2\bg_1$ (see Remark \ref{confchange}). More in general, we have

\begin{theorem} \label{thm:IICE2}
All the complex manifolds $(M_{(2,n)}(\fG,\fK),\bJ)$ have a complete, Hermitian, non-K\"ahler, second-Chern-Einstein, cohomogeneity one metric.
\end{theorem}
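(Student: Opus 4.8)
The plan is to solve the second-Chern-Einstein system \eqref{IICE} in the \emph{weak} sense, i.e.\ allowing $\l$ to be a (non-constant) function (cf.\ Proposition~\ref{prop:chern-ricci}), by reducing it to a single underdetermined ODE, autonomizing it, and then exploiting the remaining freedom. First I would eliminate $\l$: subtracting the two lines of \eqref{IICE} and writing $c\=\tfrac{mn}{p}$ yields the single second-order equation
\begin{equation}\label{eq:2CEreduced}
-\tfrac{f''}{f}+\tfrac{h''}{h}+\tfrac{f'}{f}\tfrac{h'}{h}-\tfrac{h'^2}{h^2}+2(m{-}1)c\,\tfrac{f'h-fh'}{h^3}+2mc^2\,\tfrac{f^2}{h^4}-\tfrac{2m}{h^2}=0,
\end{equation}
and conversely a positive pair $(f,h)$ solving \eqref{eq:2CEreduced} solves \eqref{IICE} with $\l$ recovered from its first line; this $\l$ is automatically a smooth admissible function once $\bg(f,h)$ is a smooth metric on $M_{(2,n)}(\fG,\fK)$.

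Next I would autonomize \eqref{eq:2CEreduced}: set $u\=f/h$ and pass to the new variable $s$ with $\diff s=\diff r/h(r)$, that is $s(r)=\int_0^r h(t)^{-1}\diff t$. A direct computation collapses \eqref{eq:2CEreduced} into the \emph{autonomous} equation
\begin{equation}\label{eq:2CEauto}
\ddot u=2(m{-}1)c\,u\dot u+2m\,u\,(c^2u^2-1),\qquad \dot u=\tfrac{\diff u}{\diff s},
\end{equation}
and, crucially, with \emph{no surviving constraint on $h$}: \eqref{eq:2CEreduced} is equivalent to \eqref{eq:2CEauto} together with the relation $u=f/h$ and the reparametrization. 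The smoothness conditions at the singular orbit (case $i=2$, listed after \eqref{eq:metric}) then force $h$ to be an arbitrary positive even function with $h(0)=a>0$, whereas $u$ must be \emph{the} solution of \eqref{eq:2CEauto} with $u(0)=0$ and $\dot u(0)=f'(0)=1$; by uniqueness and the symmetry $u(s)\mapsto-u(-s)$ of \eqref{eq:2CEauto}, this $u$ is odd and defined on a maximal symmetric interval $(-s_1,s_1)$. Moreover, $\bg(f,h)$ is non-K\"ahler exactly when $u\not\equiv0$ (which holds since $\dot u(0)=1$; cf.\ \eqref{eq:kahler}), and complete exactly when $f,h>0$ on $(0,+\infty)$ while $r$ exhausts $[0,+\infty)$.

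Finally I would use the freedom in $h$. Let $s_*\in(0,s_1]$ be maximal with $u>0$ on $(0,s_*)$; then $s_*>0$ because $\dot u(0)=1$, and if $s_*<s_1$ then $u(s_*)=0$ with $\dot u(s_*)<0$. Pick a positive even smooth function $\varphi$ on $(-s_*,s_*)$ with $\varphi(0)=a$ and $\int_0^{s_*}\varphi=+\infty$ --- take $\varphi\equiv a$ if $s_*=+\infty$, and $\varphi(s)=a\,s_*^2/(s_*^2-s^2)$ if $s_*<+\infty$. Then $R(s)\=\int_0^s\varphi$ is an odd smooth diffeomorphism of $(-s_*,s_*)$ onto $\bR$; with $h\=\varphi\circ R^{-1}$ and $f\=(u\circ R^{-1})\,h$ one checks, using the oddness of $u$ and $R^{-1}$ and the evenness of $\varphi$, that $f$ is a smooth odd function with $f'(0)=1$ and $h$ a smooth even function with $h(0)=a>0$, hence $\bg(f,h)$ is a smooth $\fG$-invariant Hermitian metric on $M_{(2,n)}(\fG,\fK)$; that $f,h>0$ on $(0,+\infty)$ while $r$ runs over all of $[0,+\infty)$, so $\bg(f,h)$ is complete ($r$ being a $1$-Lipschitz exhaustion with compact sublevels $\{r\le R\}$); that $(f,h)$ solves \eqref{eq:2CEreduced}, hence \eqref{IICE}, so $\bg(f,h)$ is second-Chern-Einstein; and that $hh'+cf\not\equiv0$ --- e.g.\ $\tfrac{\diff}{\diff r}(hh'+cf)|_{r=0}=h(0)h''(0)+c>0$ for the choices above --- so $\bg(f,h)$ is non-K\"ahler by \eqref{eq:kahler}.

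I expect essentially all the content to sit in the substitution $u=f/h$, $\diff s=\diff r/h$, which simultaneously decouples \eqref{eq:2CEreduced} into the single autonomous ODE \eqref{eq:2CEauto} and separates the \emph{fixed} profile $u(s)$ from the \emph{free} reparametrization $r\leftrightarrow s$; after that the choice of $\varphi$ absorbs all remaining freedom, and completeness is automatic because $M_{(2,n)}(\fG,\fK)$ has a single singular orbit and an infinite $r$-end. Alternatively, in the spirit of \cite[Section~11]{berard-bergery}, one could start from the local solution near the singular orbit given by Theorem~\ref{thm:loc} and continue it; and in the special case $c=1$ (e.g.\ $\cO_{\bC\bP^{m-1}}(-1)$) the fixed profile is simply $u(s)=\tanh s$, so that $\varphi\equiv1$ gives $f(r)=\tanh r$, $h(r)\equiv1$, while $\varphi(s)=k\cosh s$ recovers the family $f(r)=r$, $h(r)=\sqrt{r^2+k^2}$ of the preceding discussion. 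For $c\neq1$ the analogous $\tanh$-profile has $\dot u(0)=1/c\neq1$ and violates $f'(0)=1$, which is exactly why the free choice of $h$ above is used.
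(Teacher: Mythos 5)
Your reduction is correct and the argument goes through, but it is a genuinely different route from the paper's. I checked that subtracting the two lines of \eqref{IICE} and substituting $u\=f/h$, $\diff s=\diff r/h$ does yield exactly $\ddot u=2(m{-}1)c\,u\dot u+2mu(c^2u^2-1)$ with no residual condition on $h$; this is the conformal invariance of the second-Chern-Einstein equation (Remark \ref{rmk:conf-invariant}) made explicit, since by Remark \ref{confchange} both $f/h$ and the parameter $s$ are unchanged under $\bg\mapsto\f^2\bg$, so $u(s)$ is a conformal invariant and the freedom in $h$ is precisely the conformal freedom, which you then spend on completeness and on the boundary conditions at the singular orbit. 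The paper instead fixes the ansatz $f_{\phi}=\tfrac{p}{2mn}\phi'$, $h_{\phi}=\sqrt{\phi}$, reducing \eqref{IICE} to the third-order equation $\phi\,\phi'''/\phi'-m\phi''+2m=0$, and then sets $\phi'=\sqrt{u(\phi)}$ to land on the \emph{linear} equation $tu''-mu'+4m=0$, whose explicit solution $u(t)=-\tfrac{4m(n+p)}{p(m+1)}+4t+\tfrac{4(mn-p)}{p(m+1)}t^{m+1}$ is positive and increasing on $(1,+\infty)$ (here one uses $mn\geq p$), giving global existence directly. The trade-off: the paper's method produces quasi-explicit metrics and the same ansatz is reused verbatim for the constant Chern-scalar curvature problem (Theorems \ref{thm:chcsc-M2} and \ref{thm:Chcsc-M4}), whereas your method requires no explicit integration, works uniformly whether the profile $u(s)$ survives for all $s$ or not, and isolates the geometric content (one autonomous ODE determines the conformal class; completeness is then free). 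Two small points worth making explicit if you write this up: the passage between \eqref{eq:2CEreduced} and its polynomial form $h^2u''+hh'u'-2(m{-}1)c\,huu'-2mc^2u^3+2mu=0$ involves multiplying by $uh^2$, which vanishes at $r=0$, so one should verify the tensor equation on the dense regular part and extend over the singular orbit by continuity; and the non-K\"ahler check should read $\tfrac{\diff}{\diff r}(hh'+cf)\big|_{r=0}=h(0)h''(0)+h'(0)^2+c=h(0)h''(0)+c>0$, which is what you have since $h'(0)=0$. Your consistency check for $c=1$ ($u=\tanh s$, with $\f(s)=k\cosh s$ recovering $f=r$, $h_k=\sqrt{r^2+k^2}$ from \eqref{fhktaut}) is correct.
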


\begin{proof}
Fix a manifold $(M_{(2,n)}(\fG,\fK),\bJ)$ and set
\begin{equation} \label{ansM2}
f_{\phi}(r) \= \tfrac{p}{2mn}\phi'(r) \,\, , \quad h_{\phi}(r) \= \sqrt{\phi(r)}
\end{equation}
for some smooth, positive, increasing function $\phi: [0,+\infty) \to \bR$. Notice that, in this case
$$
h_{\phi}(r)h_{\phi}'(r)+\tfrac{mn}{p}f_{\phi}(r) = \phi'(r)
$$
and so this metric is necessarily non-K\"ahler by Equation \eqref{eq:kahler}. Setting the initial condition $h(0)=1$, the second-Chern-Einstein Equations \eqref{IICE} become
\begin{equation} \label{anseqM2}
\left\{ \begin{array}{l}
\phi(r)\tfrac{\phi'''(r)}{\phi'(r)} -m\phi''(r)+2m=0 \\
\phi(0)=1 \,\, , \quad \phi'(0)=0 \,\, , \quad \phi''(0)=\tfrac{2mn}{p}
\end{array}\right.  \,\, . \end{equation}
The Cauchy problem \eqref{anseqM2} admits a unique smooth solution on some interval $[0,\e)$, which extends to an even smooth function on $(-\e,\e)$. Let us prove that this solution can be extended to the whole $[0,+\infty)$.

Assume that the solution to Equation \eqref{anseqM2} is of the form
\begin{equation} \label{anseq'M2}
\phi'(r)=\sqrt{u(\phi(r))} \,\, .
\end{equation}
Then, from Equations \eqref{anseqM2} and \eqref{anseq'M2}, we get the following Cauchy problem for $u(t)$:
\begin{equation} \label{anseq''M2}
\left\{ \begin{array}{l}
tu''(t) -mu'(t) +4m=0 \\
u(1)=0 \,\, , \quad u'(1)=\tfrac{4mn}{p}
\end{array}\right.  \,\, . \end{equation}
The unique solution to Equation \eqref{anseq''M2} is the function $u: [1,+\infty) \to \bR$ defined by
\begin{equation} \label{uM2}
u(t) \= -\tfrac{4m(n+p)}{p(m+1)}+4t+\tfrac{4(mn-p)}{p(m+1)}t^{m+1} \,\, ,
\end{equation}
which is smooth, positive and increasing. Hence, the function
$$
\f: [1,+\infty) \to \bR \,\, , \quad \f(r) \= \int_1^r\frac{\diff t}{\sqrt{u(t)}}
$$
is smooth, positive, increasing and, by construction, its inverse $\phi \= \f^{-1}$ solves the Cauchy problem \eqref{anseqM2}. Therefore, by means of Equation \eqref{ansM2}, the proof is completed. \end{proof}

\begin{rem}
The Chern-scalar curvature of the metric $\bg_{\phi}=\bg(f_{\phi},h_{\phi})$ constructed from Equation \eqref{ansM2} by solving the Cauchy problem \eqref{anseqM2} is given by
$$
\scal^{\rm Ch}(\bg_{\phi})(r) = 2m\Big(-\tfrac{\phi'''(r)}{\phi'(r)} +(m-1)\tfrac{\phi''(r)}{\phi(r)} +(m-1)\big(\tfrac{\phi'(r)}{\phi(r)}\big)^2\Big) \,\, .
$$
Notice that, if $mn-p=0$, which correspond to the manifolds $\cO_{\bC\bP^{m-1}}(-1)$, the function \eqref{uM2} is $u(t)=4(t-1)$. Hence, we recover the family of examples introduced in Formula \eqref{fhktaut}.
\end{rem}

Finally we observe that, concerning the compact simply-connected manifolds $(M_{(4,n)}(\fG,\fK),\bJ)$, we have the following

\begin{prop} \label{noCE4}
On the complex manifolds $(M_{(4,n)}(\fG,\fK),\bJ)$ there are no cohomogenity one, submersion-type Hermitian metrics that are second-Chern-Einstein.
\end{prop}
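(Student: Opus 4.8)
The plan is to reduce the statement to one about Kähler--Einstein metrics, and then to rule those out by an explicit ODE analysis in which the presence of \emph{two} singular orbits is decisive. So suppose, for contradiction, that $\bg=\bg(f,h)$ is a cohomogeneity one, submersion-type, second-Chern-Einstein metric on $(M_{(4,n)}(\fG,\fK),\bJ)$. By Corollary \ref{globCK} (the case $i=4$), $\bg$ is globally conformally Kähler; its conformal factor is $\fG$-invariant, and the vanishing of the relevant components of the Lee form at the singular orbits (Proposition \ref{propLee}) makes it a positive admissible function of $r$ (Remark \ref{admissible}), so by Remark \ref{confchange} the Kähler representative $\bg_0=\bg(f_0,h_0)$ is again cohomogeneity one and of submersion type. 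By conformal invariance of the second-Chern-Einstein condition (Remark \ref{rmk:conf-invariant}), $\bg_0$ is second-Chern-Einstein as well, hence --- being Kähler --- Einstein, say $\Ric(\bg_0)=\tfrac{\mu}{2m}\bg_0$ with $\mu\in\bR$. Thus it suffices to show that $M_{(4,n)}(\fG,\fK)$ carries no cohomogeneity one, submersion-type Kähler--Einstein metric.

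For this, I would first use the Kähler equation \eqref{eq:kahler}, i.e.\ $h_0h_0'+\tfrac{mn}{p}f_0=0$, to pass to the single unknown $u\=h_0^2$: then $f_0=-\tfrac{p}{2mn}u'$, so $f_0>0$ on $(0,\pi)$ forces $u$ strictly decreasing, and the $i=4$ smoothness conditions $f_0'(0)=1=-f_0'(\pi)$, $h_0'(0)=0=h_0'(\pi)$ become
$$
u'(0)=u'(\pi)=0,\qquad u''(0)=-\tfrac{2mn}{p},\qquad u''(\pi)=\tfrac{2mn}{p},\qquad u>0 \text{ on }[0,\pi].
$$
Substituting $f_0=-\tfrac{p}{2mn}u'$ into the Riemannian Ricci formulas of Proposition \ref{prop:riem-ricci}, one checks that the $(N,N)$- and $(T^*,T^*)$-components of the Einstein equation coincide and are the $r$-derivative of the $(X^*,X^*)$-component, so the whole system collapses to the single autonomous ODE
$$
-u''-\tfrac{(m-1)(u')^2}{2u}+2m=\tfrac{\mu}{2m}\,u.
$$

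Next I would integrate this by reduction of order: since $u$ is strictly monotone, $w\=(u')^2$ is a function of $u$ and the equation becomes the linear first-order ODE $w'+\tfrac{m-1}{u}w=4m-\tfrac{\mu}{m}u$, with solution
$$
(u')^2=4u-\tfrac{\mu}{m(m+1)}u^2+C\,u^{1-m}\qquad\text{for some }C\in\bR.
$$
Evaluating the ODE at the two singular orbits gives $\mu\,u(0)=4m^2\big(1+\tfrac{n}{p}\big)$ and $\mu\,u(\pi)=4m^2\big(1-\tfrac{n}{p}\big)$; thus $\mu>0$, and positivity of $u(\pi)$ already forces $n<p$, so the case $n\ge p$ is done. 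Writing $a_0\=u(0)>a_\pi\=u(\pi)>0$ and $t\=a_\pi/a_0\in(0,1)$, the conditions $(u')^2=0$ at both endpoints (which eliminate $C$), together with $\mu=\tfrac{8m^2}{a_0+a_\pi}$ coming from the two displayed relations, reduce to the single algebraic equation $\Phi(t)=0$ where
$$
\Phi(t)\=(m-1)-(m+1)t+(m+1)t^m-(m-1)t^{m+1}.
$$
Finally one observes $\Phi(1)=\Phi'(1)=0$ and $\Phi''(t)=m(m-1)(m+1)t^{m-2}(1-t)>0$ on $(0,1)$, so $\Phi$ is strictly convex there and hence lies strictly above its tangent line at $t=1$, which is identically zero; thus $\Phi(t)>0$ on $(0,1)$, contradicting $\Phi(t)=0$.

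The two substitution computations (reducing the Einstein system to the scalar ODE, and the subsequent reduction of order) are routine algebra that I would not spell out in detail. The conceptual core --- and the only point where having two singular orbits, rather than one, is used --- is that the endpoint data at $r=0$ and $r=\pi$ over-determine the first integral $(u')^2=w(u)$ and force the impossible relation $\Phi(t)=0$; the main things to be careful about are the correct translation of the smoothness conditions into the values of $u''$ at the endpoints (these encode $f_0'(0)=1=-f_0'(\pi)$) and the verification that the conformal reduction to the Kähler--Einstein case is legitimate.
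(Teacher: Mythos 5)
Your proof is correct, and its first half coincides with the paper's argument: both reduce, via Corollary \ref{globCK} and the conformal invariance of the second-Chern-Einstein condition (Remark \ref{rmk:conf-invariant}), to the non-existence of a K\"ahler--Einstein metric on $(M_{(4,n)}(\fG,\fK),\bJ)$. The difference is in how that non-existence is established. The paper simply cites the literature (\cite[Remarques 8.14, (1)]{berard-bergery}, \cite[Remarks 9.126, (b)]{besse}), which excludes \emph{any} K\"ahler--Einstein metric on these manifolds via the known obstructions; you instead prove by hand the weaker statement that no \emph{cohomogeneity one, submersion-type} K\"ahler--Einstein metric exists, which suffices because (as you correctly argue) the K\"ahler representative of the conformal class is again $\fG$-invariant of submersion type by Proposition \ref{propLee} and Remark \ref{confchange}. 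I checked your computation: substituting the K\"ahler condition \eqref{eq:kahler} into Proposition \ref{prop:riem-ricci} does collapse the Einstein system to $-u''-\tfrac{(m-1)(u')^2}{2u}+2m=\tfrac{\mu}{2m}u$ for $u=h_0^2$ (and the $(N,N)$-equation is indeed $(u')^{-1}$ times the $r$-derivative of this one); the endpoint evaluations give $\mu\,u(0)=4m^2(1+\tfrac np)$ and $\mu\,u(\pi)=4m^2(1-\tfrac np)$; and the first integral $(u')^2=4u-\tfrac{\mu}{m(m+1)}u^2+Cu^{1-m}$ together with the two vanishing conditions does reduce to $\Phi(t)=0$ with $\Phi$ strictly convex and vanishing to second order at $t=1$, hence positive on $(0,1)$. (Two small points worth making explicit: if the original metric is only \emph{weakly} second-Chern-Einstein, you need Schur's lemma to promote the K\"ahler representative from weakly Einstein to Einstein with constant $\mu$; and the case $n\ge p$ is genuinely possible for some bases, so your separate disposal of it is needed.) Your route is longer but self-contained and makes visible exactly where the presence of two singular orbits is used — the over-determination of the first integral by the boundary data at both ends — whereas the paper's route is shorter but leans on an external, stronger result.
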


\begin{proof} Assume that there exists cohomogenity one, submersion-type Hermitian metric $\bg$ on a complex manifold $(M_{(4,n)}(\fG,\fK),\bJ)$ which is second-Chern-Einstein. Then, by Corollary \ref{globCK}, it is globally conformally K\"ahler. By Remark \ref{rmk:conf-invariant}, this implies the existence of a K\"ahler-Einstein metric on $(M_{(4,n)}(\fG,\fK),\bJ)$, that is not possible (see \cite[Remarques 8.14, (1)]{berard-bergery} and \cite[Remarks 9.126, (b)]{besse}).
\end{proof}

\begin{rem} \label{rem:sconfitta}
The projective space $\bC\bP^m$ is a standard cohomogeneity one manifold with respect to the action of $\fG= \fSU(m)$ given by $a \cdot [z^0 : z] \= [z^0 : a \cdot z] \,\, .$ Even if it is not a Berard-B\'erg\'ery manifold according to the Definition \ref{def:bb}, all the formulas in Section \ref{sec:bb} and Section \ref{sec:metrics} still apply to this specific case. In particular, all the $\fG$-invariant metrics on $\bC\bP^m$ are of the form \eqref{eq:metric} with $n=1$ and $p=m$, where $f,h: [0,\tfrac{\pi}2] \to \bR$ are smooth, positive function satisfying: \begin{itemize}
\item[$\bcdot$] $f$ is the restriction of a smooth odd function on $\bR$ satisfying
$$
f(r+\tfrac{\pi}2) = -f(r-\tfrac{\pi}2) \quad \text{ and } \quad f'(0)=1=-f'(\tfrac{\pi}2) \,\, ;
$$
\item[$\bcdot$] $h$ is the restriction of a smooth even function on $\bR$ satisfying
$$
h(r+\tfrac{\pi}2) = -h(r-\tfrac{\pi}2) \quad \text{ and } \quad h'(\tfrac{\pi}2)=-1 \,\, .
$$
\end{itemize}
Notice that the Fubini-Study metric $\bg_{\rm FS}$ with sectional curvature $1 \leq \sec \leq 4$ corresponds to the functions
$$
f(r)\= \tfrac12\sin(2r) \,\, , \quad h(r)\= \cos(r) \,\, .
$$
As argued in the proof of Proposition \ref{noCE4}, all the $\fG$-invariant second-Chern-Einstein metrics on $\bC\bP^m$ are necessarily conformal to the Fubini-Study metric $\bg_{\rm FS}$. For example, the functions
$$
f(r)\= \tfrac12\sin(2r) \,\, , \quad h_k(r)\= \cos(r)\sqrt{\sin(r)^2+k^2\cos(r)^2} \,\, , \quad k>0
$$
define  non-K\"ahler, second-Chern-Einstein metrics $\bg_k$ on $\bC\bP^m$ of the form
$$
\bg_k = \f_k^2\, \bg_{\rm FS} \,\, , \quad \text{ with } \quad \f_k(r) \= \tfrac{k}{\cos(r)^2+k^2\sin(r)^2} \,\, .
$$
\end{rem}

\subsection{Constant Chern-scalar curvature metrics in case of singular orbits} \hfill \par

In this section, we construct complete constant Chern-scalar curvature metrics $\bg=\bg(f,h)$ on the complex manifolds $(M_{(2,n)}(\fG,\fK),\bJ)$ and $(M_{(4,n)}(\fG,\fK),\bJ)$ by using again the technique exploited in \cite[Section 11]{berard-bergery}.

Fix a complex manifold $(M_{(i,n)}(\fG,\fK),\bJ)$, with $i\in\{2,4\}$, and set
\begin{equation} \label{ansCh}
f_{\phi}(r) \= \tfrac{p}{2mn}\phi(r)\phi'(r) \,\, , \quad h_{\phi}(r) \= \phi(r)
\end{equation}
for some smooth, increasing, positive, function $\phi: I \to \bR$. Notice that, in this case,
$$
h_{\phi}(r)h_{\phi}'(r)+\tfrac{mn}{p}f_{\phi}(r) = \tfrac12\phi(r)\phi'(r)
$$
and so this metric is necessarily non-K\"ahler by Equation \eqref{eq:kahler}. Let $c \in \bR$ to be fixed later. Then, the constant Chern-scalar curvature equation
$$
\scal^{\rm Ch}(\bg_{\phi})=c
$$
for the metric $\bg_{\phi} \= \bg(f_{\phi},h_{\phi})$ becomes
\begin{equation} \label{anseqscal}
\phi(r)^2\tfrac{\phi'''(r)}{\phi'(r)} +(m+2)\phi(r)\phi''(r) -m(m-1)\phi'(r)^2 +\tfrac{c}2\phi(r)^2-2m(m-1)=0 \,\, .
\end{equation}
We look for a solution of the form
\begin{equation} \label{anseqscal'}
\phi'(r)=\sqrt{u(\phi(r))}
\end{equation}
for some smooth real function $u=u(t)$. The, we get the following ODE
\begin{equation} \label{ODEu(t)}
t^2u''(t) +(m+2)tu'(t) -2m(m-1)u(t) +ct^2-4m(m-1)=0 \,\, ,
\end{equation}
which can be explicitly integrated. Indeed, the following cases occur. \begin{itemize}

\item[$\bcdot$] If $m=2$, then the solutions to Equation \eqref{ODEu(t)} are
\begin{equation} \label{solm=2}
u_{a,b,c}(t) = at^{-4} -2 +bt-\tfrac{c}6t^2 \,\, , \quad \text{ with } a,b \in \bR \,\, .
\end{equation}
In this case, the base space of the fibration $M_{(i,n)}(\fG,\fK) \to P$ is necessarily $P=\bC \bP^1$ and so $p=2$. 

\item[$\bcdot$] If $m=3$, then the solutions to Equation \eqref{ODEu(t)} are
\begin{equation} \label{solm=3}
u_{a,b,c}(t) = at^{-6} -2 +bt^2-\tfrac{c}8\log(t)t^2 \,\, , \quad \text{ with } a,b \in \bR \,\, .
\end{equation}
In this case, the only possibilities for the base space of the fibration $M_{(i,n)}(\fG,\fK) \to P$ are
$$
P=\bC \bP^2 = \fSU(3)/\fS(\fU(1){\times}\fU(2)) \,\, , \quad P={\rm Gr}(2,\bR^5) = \fSp(2)/\fU(2)
$$
and so $p=3$. 

\item[$\bcdot$] If $m>3$, then the solutions to Equation \eqref{ODEu(t)} are given by
\begin{equation} \label{solm>3}
u_{a,b,c}(t) = at^{-2m} -2 +\tfrac{c}{2(m+1)(m-3)}t^2 +bt^{m-1} \,\, , \quad \text{ with } a,b \in \bR \,\, .
\end{equation}
\end{itemize}

Then, by means of Equations \eqref{ansCh} and \eqref{anseqscal'}, we are able to construct constant Chern-scalar curvature metrics on the manifolds $(M_{(2,n)}(\fG,\fK),\bJ)$ and $(M_{(4,n)}(\fG,\fK),\bJ)$.

\begin{theorem}\label{thm:chcsc-M2}
Let $c \in \bR$, $c\leq0$. Then, all the complex manifolds $(M_{(2,n)}(\fG,\fK),\bJ)$ have a complete, Hermitian, non-K\"ahler, cohomogeneity one metric $\bg$ with $\scal^{\rm Ch}(\bg)=c$.
\end{theorem}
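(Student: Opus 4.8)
The plan is to produce the metric by B\'erard-Bergery's separation-of-variables method, starting from the radial ansatz \eqref{ansCh}, $f_\phi = \tfrac{p}{2mn}\phi\phi'$ and $h_\phi = \phi$. Since $h_\phi h_\phi' + \tfrac{mn}{p}f_\phi = \tfrac12\phi\phi'$ is not identically zero, such a metric is automatically non-K\"ahler by \eqref{eq:kahler}, and by \eqref{scal} the equation $\scal^{\rm Ch}(\bg_\phi) = c$ is equivalent to the third-order ODE \eqref{anseqscal}. As set up above, I would look for solutions of the separated form $\phi' = \sqrt{u(\phi)}$, which turns \eqref{anseqscal} into the linear Euler-type equation \eqref{ODEu(t)}, whose general solution $u_{a,b,c}$ has been written down explicitly in the three cases $m = 2$, $m = 3$, $m \ge 4$.

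The next step is to translate the smoothness conditions at the singular orbit (case $i = 2$) into conditions on $u$. Writing $t_0 = h(0) = \phi(0) > 0$ --- a positive parameter, free since it only rescales the base $P$ --- these conditions are equivalent to $u(t_0) = 0$ and $t_0\, u'(t_0) = \tfrac{4mn}{p}$. Indeed, the identity $\phi'' = \tfrac12 u'(\phi)$ together with the fact that $(\phi')^2 - u(\phi)$ is a first integral shows that the solution of $\phi'' = \tfrac12 u'(\phi)$ with $\phi(0) = t_0$, $\phi'(0) = 0$ is smooth and even, satisfies $(\phi')^2 = u(\phi)$ identically, and yields $f_\phi'(0) = \tfrac{p}{4mn}t_0\, u'(t_0)$; so imposing $f_\phi$ odd with $f_\phi'(0) = 1$ and $h_\phi$ even amounts exactly to those two linear equations, which pin down $a$ and $b$ in terms of $(t_0, c)$ --- the system being nondegenerate because the Wronskian of the two homogeneous solutions of \eqref{ODEu(t)} is nonzero for $t > 0$. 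One then recovers $\phi$ via $r = \int_{t_0}^{\phi(r)} u(s)^{-1/2}\,ds$.

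The heart of the argument is to arrange that $u > 0$ on a maximal interval $(t_0, T)$, $T \le +\infty$, with $\int_{t_0}^{T} u(s)^{-1/2}\,ds = +\infty$: this is exactly what makes $\phi$ extend to all of $[0, +\infty)$, hence $(f_\phi, h_\phi)$ a smooth cohomogeneity one Hermitian metric on $M_{(2,n)}(\fG,\fK)$ extending over the singular orbit, and complete (the principal orbits being compact and $r$ exhausting $[0,+\infty)$). For $m = 2$ and $m = 3$ one can simply take $t_0 = 1$: when $c \le 0$ the inhomogeneous term of $u_{a,b,c}$ --- namely $-\tfrac{c}{6}t^2$, respectively $-\tfrac{c}{8}(\log t)\,t^2$ --- is nonnegative on $[1,+\infty)$, a direct inspection of the explicit formula (splitting on the sign of the coefficient $a$) shows $u$ is increasing there, hence positive on $(1,+\infty)$, while $u$ grows no faster than $t^2$, resp. $t^2\log t$, so the length integral diverges. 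For $m \ge 4$ this term has the opposite sign and $u$ eventually behaves like $b\, t^{m-1}$, so $u$ cannot stay positive on all of $(t_0,+\infty)$ while keeping the metric complete; here I would run a shooting argument in $t_0$, choosing it so that the first zero $T > t_0$ of $u$ is a \emph{double} zero with $u > 0$ on $(t_0, T)$, which forces the length integral to diverge and yields a complete metric whose circle fibres collapse as $r \to +\infty$. In all cases $\scal^{\rm Ch}(\bg_\phi) = c$ by construction.

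The main obstacle is precisely this last positivity-and-completeness analysis, to be carried out for every $c \le 0$, every $n$, and every admissible $m$: the favorable sign of the $t^2$-type term of $u$ is available only for $m \le 3$, and in general positivity must be obtained either by exploiting the freedom in $h(0)$ via a continuity argument that produces a double zero of $u$, or by a careful monotonicity analysis of $u$ and $u'$ using $c \le 0$. By comparison, the reduction to \eqref{ODEu(t)}, the identification of the boundary conditions, and the smooth even/odd extension of $\phi$, $f_\phi$, $h_\phi$ across the singular orbit (automatic once $u$ has a simple zero at $t_0$ with $u'(t_0) > 0$) are routine.
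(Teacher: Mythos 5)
Your reduction is exactly the paper's: the ansatz \eqref{ansCh}, the separation $\phi'=\sqrt{u(\phi)}$, the Euler-type equation \eqref{ODEu(t)}, and the translation of the smoothness conditions at the singular orbit into $u(t_0)=0$, $t_0u'(t_0)=\tfrac{4mn}{p}$ (the paper normalizes $t_0=h(0)=1$). For $m=2$ and $m=3$ your argument is correct and in fact slightly more careful than the paper's: you check that $u$ grows at most like $t^2$ (resp.\ $t^2\log t$), so that $\int^{\infty}u^{-1/2}\,dt$ diverges and the metric is genuinely complete, whereas the paper only proves, via the convexity estimate \eqref{pignolo}, that $u_c$ is positive and increasing on $(1,+\infty)$ --- an argument that works uniformly in $m$ and is cleaner than your ``direct inspection splitting on the sign of $a$''.

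The gap is in your case $m\ge 4$. The shooting argument you propose --- choosing $t_0$ so that the first zero $T>t_0$ of $u$ is a double zero with $u>0$ on $(t_0,T)$ --- cannot succeed for $c\le 0$: if $u>0$ on $(t_0,T)$ and $u(T)=0$, then $u$ has an interior critical point $t_1$ with $u(t_1)>0$ which is a local maximum, while \eqref{ODEu(t)} forces $u''(t_1)=2m(m-1)u(t_1)t_1^{-2}-c+4m(m-1)t_1^{-2}>0$ at any critical point where $u\ge0$ and $c\le0$. Hence every solution with a simple zero at $t_0$ and $u'(t_0)>0$ is increasing and positive on all of $(t_0,+\infty)$, for \emph{every} choice of $t_0$; there is no double zero to shoot for, and the parameter $t_0$ buys you nothing. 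The asymptotics then force $u\sim b\,t^{m-1}$ with $b>0$ (see \eqref{solm>3}; $b>0$ is needed for $u\to+\infty$ when $c\le0$), so for $m>3$ one has $\int^{\infty}u^{-1/2}\,dt<\infty$ and the separated ansatz yields a metric of finite radial diameter, i.e.\ an incomplete one. So your instinct that completeness is the delicate point for $m\ge4$ is correct --- and it puts a finger on something the paper's own proof does not address, since the paper never verifies divergence of the arc-length integral, which indeed fails for $m>3$ --- but your proposed remedy does not repair it, and the case $m\ge4$ is left unproved in your write-up. Closing it would require leaving the separated ansatz \eqref{anseqscal'} (or the ansatz \eqref{ansCh} altogether), since the constant Chern-scalar curvature equation \eqref{scal} is a single ODE in the two unknowns $(f,h)$ and leaves much more freedom than the one-function reduction exploits.
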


\begin{proof}
Setting $h(0)=1$, the smoothness conditions for $f,h$ imply that
\begin{equation} \label{icM2} \begin{gathered}
\phi(0)=1 \,\, , \quad \phi'(0)=0 \,\, , \quad \phi''(0)=\tfrac{2mn}{p} \,\, .
\end{gathered}\end{equation}
Let us stress that, if there exists a smooth solution $\phi: [0,+\infty) \to \bR$ to the Equation \eqref{anseqscal} satisfying the boundary conditions \eqref{icM2}, then it can be extended to a smooth even function on $\bR$.

Notice that, by means of Equation \eqref{anseqscal'}, conditions \eqref{icM2} imply that the solution $u_{a,b,c}$ to the ODE \eqref{ODEu(t)} verifies
\begin{equation} \label{icu(t)}
u_{a,b,c}(1)=0 \,\, , \quad u_{a,b,c}'(1)=\tfrac{4mn}{p} \,\, .
\end{equation}
Therefore, we obtain two values $a(c) ,b(c)$, depending on $c$, by imposing conditions \eqref{icu(t)}: \begin{itemize}
\item[$\bcdot$] if $m=2$, then by Formula \eqref{solm=2} we get
$$
a(c)\=\tfrac25-\tfrac{c}{30}-\tfrac45n \,\, , \quad b(c)\=\tfrac85+\tfrac{c}5+\tfrac45n \,\, ;
$$
\item[$\bcdot$] if $m=3$, then by Formula \eqref{solm=3} we get
$$
a(c)\=\tfrac12-\tfrac{c}{64}-\tfrac{n}2 \,\, , \quad b(c)\=\tfrac32+\tfrac{c}{64}+\tfrac{n}2 \,\, ;
$$
\item[$\bcdot$] if $m>3$, then by Formula \eqref{solm>3} we get
$$\begin{aligned}
a(c) &\= -\frac{(m+1)(4m(2n-p)+4p)+cp}{2p(m+1)(3m-1)} \,\, , \\
b(c) &\= \frac{4m(m-3)(n+p)-cp}{p(3m-1)(m-3)} \,\, .
\end{aligned}$$
\end{itemize}

In all of this three cases it can be directly checked that, for any $c\leq0$, the function $u_c \= u_{a(c),b(c),c}$ is positive and increasing for $t \in (1,+\infty)$. Indeed, by means of conditions \eqref{icu(t)}, there exists $\e_{\zero}>0$ such that $u_c(t)>0$ and $u'_c(t)>0$ for any $t \in (1,1+\e_{\zero})$. Assume by contradiction that there exists $t_{\zero}>1$ such that $u'_c(t)>0$ for any $t \in [1,t_{\zero})$ and $u'_c(t_{\zero})=0$. Then, $t_{\zero}$ is a local maximum point or a stationary point of inflection for $u(t)$, but Equation \eqref{ODEu(t)} implies that
\begin{equation} \label{pignolo}
u_c''(t_{\zero})= 2m(m-1)\tfrac{u_c(t_{\zero})}{t_{\zero}^2} -c +\tfrac{4m(m-1)}{t_{\zero}^2} >0 \,\, ,
\end{equation}
which is not possible. Hence
$$
\f_c: [1,+\infty) \to \bR \,\, , \quad \f_c(r) \= \int_1^r\frac{\diff t}{\sqrt{u_c(t)}}
$$
is smooth, increasing and, by construction, its inverse $\phi_c \= \f_c^{-1}$ solves Equation \eqref{anseqscal} with the initial conditions \eqref{icM2}.
\end{proof}

Constant Chern-scalar curvature metrics on Hirzebruch surfaces have been constructed by Koca and Lejmi by using the B\'erard-Bergery ansatz in \cite[Theorem 1]{koca-lejmi}. Since in complex dimension $m=2$ the complex manifolds $(M_{(4,n)}(\fG,\fK),\bJ)$ reduces to the Hirzebruch surfaces (see Example \ref{ex:hopf}), the next theorem extends their result to $m>2$.

\begin{theorem}\label{thm:Chcsc-M4}
Let $c \in \bR$, $c>0$. Then, all the complex manifolds $(M_{(4,n)}(\fG,\fK),\bJ)$ have a Hermitian, non-K\"ahler, cohomogeneity one metric $\bg$ with $\scal^{\rm Ch}(\bg)=c$.
\end{theorem}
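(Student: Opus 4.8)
The plan is to follow the proof of Theorem \ref{thm:chcsc-M2}, using the ansatz \eqref{ansCh}--\eqref{anseqscal'} but now on a compact total space, which forces \emph{two} singular orbits instead of one. So I take $f_\phi(r)=\tfrac{p}{2mn}\phi(r)\phi'(r)$, $h_\phi(r)=\phi(r)$ — automatically non-K\"ahler by \eqref{eq:kahler} — and $\phi'(r)=\sqrt{u(\phi(r))}$, so that $\scal^{\rm Ch}(\bg_\phi)=c$ becomes \eqref{anseqscal} and hence the linear ODE \eqref{ODEu(t)}, with explicit general solution $u_{a,b,c}$ given by \eqref{solm=2}, \eqref{solm=3}, \eqref{solm>3} according to $m=2$, $m=3$, $m>3$. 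If $r=0$ and $r=L$ denote the endpoints of the orbit space (the length $L>0$ is not prescribed here), smoothness of $\bg_\phi$ over both singular orbits is equivalent to asking that $\phi$ be an increasing diffeomorphism of $[0,L]$ onto some interval $[t_0,t_1]$, $0<t_0<t_1$, with $u>0$ on $(t_0,t_1)$, $u(t_0)=u(t_1)=0$, and (reading off $f_\phi'(0)=1=-f_\phi'(L)$ through $(\phi')^2=u\circ\phi$ and $\phi''=\tfrac12 u'\circ\phi$)
\[
u'(t_0)=\tfrac{4mn}{pt_0},\qquad u'(t_1)=-\tfrac{4mn}{pt_1}.
\]
Simplicity of the zeros makes $L=\int_{t_0}^{t_1}dt/\sqrt{u(t)}$ finite and makes $\phi$, recovered by inverting $r\mapsto\int_{t_0}^r dt/\sqrt{u(t)}$, extend evenly and smoothly at both ends, so that $f_\phi,h_\phi$ satisfy the smoothness conditions of case $i=4$.

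To make this finite-dimensional I would fix $t_0=1$ (that is $h(0)=1$, as in Theorem \ref{thm:chcsc-M2}): the two conditions $u(1)=0$ and $u'(1)=\tfrac{4mn}{p}$ then pin down $a=a(c)$, $b=b(c)$ exactly as in that proof, leaving the one-parameter family $u_c:=u_{a(c),b(c),c}$. For $m\in\{2,3\}$ the leading term of \eqref{solm=2}, \eqref{solm=3} is negative for every $c>0$, and for $m>3$ the coefficient $b(c)$ of $t^{m-1}$ in \eqref{solm>3} becomes negative once $c$ is large; in either case $u_c$ has a first zero $t_1(c)>1$ beyond $t=1$. Everything then reduces to finding $c>0$ with
\[
\Psi(c):=u_c'(t_1(c))+\tfrac{4mn}{pt_1(c)}=0
\]
and with $u_c>0$ on $(1,t_1(c))$.

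I expect the core difficulty to be exactly this: showing $\Psi$ has a zero in the admissible range of $c$ while keeping $u_c$ positive up to $t_1(c)$. Here the one-sided argument of Theorem \ref{thm:chcsc-M2} does not transfer — there the solution was positive and monotone on all of $[1,\infty)$, whereas now it must turn around and reach a second simple zero with prescribed slope — and the convexity trick \eqref{pignolo} used there to exclude interior critical points fails verbatim once $c>0$. I would instead argue case by case in $m\in\{2,3,>3\}$ directly from the explicit rational (and, for $m=3$, logarithmic) expressions: bound the location and sign of $u_c$ and $u_c'$, exclude interior zeros by a hands-on sign analysis, and compare the two extremes of the shooting interval for $c$ to conclude by the Intermediate Value Theorem. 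This book-keeping is the main obstacle.

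Once a single metric $\bg_0$ of this form with $\scal^{\rm Ch}(\bg_0)=c_0$ for some $c_0>0$ has been produced on $M_{(4,n)}(\fG,\fK)$, every positive constant is attained by a homothety: \eqref{defChern} shows the Chern connection is invariant under $\bg\mapsto\mu^2\bg$, whence $\scal^{\rm Ch}(\mu^2\bg_0)=\mu^{-2}\scal^{\rm Ch}(\bg_0)$, so $\mu^2\bg_0$ with $\mu=\sqrt{c_0/c}$ is a Hermitian, non-K\"ahler, cohomogeneity one metric on $M_{(4,n)}(\fG,\fK)$ with $\scal^{\rm Ch}=c$. This gives the statement for all $c>0$.
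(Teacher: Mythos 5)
Your setup is correct and coincides with the paper's: the same ansatz \eqref{ansCh}, the same reduction via \eqref{anseqscal'} to the linear ODE \eqref{ODEu(t)} with explicit solutions \eqref{solm=2}--\eqref{solm>3}, and the boundary conditions you read off ($u(t_0)=u(t_1)=0$, $u'(t_0)=\tfrac{4mn}{pt_0}$, $u'(t_1)=-\tfrac{4mn}{pt_1}$) agree with \eqref{icM4}. The closing homothety remark is also sound, and is in fact what upgrades a single achieved positive value of $\scal^{\rm Ch}$ to an arbitrary $c>0$. But the heart of the theorem --- producing a parameter value for which the fourth boundary condition is met while $u$ stays positive between its two zeros --- is precisely the step you label ``the main obstacle'' and do not carry out. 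As written, the argument establishes no existence statement: you have not shown that $\Psi(c)=0$ has a solution for any $m$, nor even that $t_1(c)$ is well defined throughout the relevant range of $c$ (for $m>3$ and small $c>0$ the coefficient $b(c)$ of $t^{m-1}$ in \eqref{solm>3} is positive and $u_c$ need not vanish a second time at all). This is a genuine gap, and it is the essential one.

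The paper closes this gap by parametrizing the shooting differently, and that choice is what makes the analysis tractable. Instead of fixing $t_0=1$ and shooting over $c$, it fixes $t_0=1$ and imposes the \emph{three} conditions $u(1)=0$, $u(k)=0$, $u'(1)=\tfrac{4mn}{p}$; these determine $a(k)$, $b(k)$ and, crucially, $c(k)$ as explicit functions of the single parameter $k=h(\pi)=t_1$. The one remaining condition $u_k'(k)+\tfrac{4mn}{kp}=0$ then becomes the vanishing of an explicit polynomial (or polynomial--logarithmic, for $m=3$) function $\alpha(k)$, for which one checks $\alpha(1)=\alpha'(1)=\alpha''(1)=0$, $\alpha'''(1)>0$ and $\alpha(k)\to-\infty$ as $k\to+\infty$; the Intermediate Value Theorem yields $\tilde{k}>1$ with $\alpha(\tilde{k})=0$, and positivity of the resulting constant $c$ follows from the argument of \eqref{pignolo}. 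In your parametrization the analogous function $\Psi(c)$ involves the implicitly defined second zero $t_1(c)$, which is why the ``book-keeping'' you anticipate is substantially harder; if you want to complete the proof along your own lines you would need to supply that case-by-case analysis in full, whereas switching the shooting variable from $c$ to $k$ reduces it to a one-line sign check.
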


\begin{proof}
Setting $h(0)=1$ and $h(\pi)=k>1$, the smoothness conditions for $f,h$ imply that
\begin{equation} \label{icM4} \begin{gathered}
\phi(0)=1 \,\, , \quad \phi'(0)=0 \,\, , \quad \phi''(0)=\tfrac{2mn}{p} \,\, , \\
\phi(\pi)=k \,\, , \quad \phi'(\pi)=0 \,\, , \quad \phi''(\pi)=-\tfrac{2mn}{kp} \,\, .
\end{gathered}\end{equation}
Let us stress that, if there exists a smooth solution $\phi: [0,\pi] \to \bR$ to Equation \eqref{anseqscal} satisfying the boundary conditions \eqref{icM4}, then it can be extended to a smooth even function on $\bR$ satisfying $\phi(\pi{+}r)=\phi(\pi{-}r)$. Since the case $m=2$ has already been addressed in \cite{koca-lejmi}, we limit ourselves to prove the statement for $m\geq3$.

Assume $m=3$. Then, by means of Equation \eqref{anseqscal'}, conditions \eqref{icM4} imply that the solution $u_{a,b,c}$ given in Formula \eqref{solm=3} verifies
$$
u_{a,b,c}(1)=0 \,\, , \quad u_{a,b,c}(k)=0 \,\, , \quad u_{a,b,c}'(1)=4n  \,\, , \quad u_{a,b,c}'(k)=-\tfrac{4n}{k} \,\, .
$$
By imposing the first three conditions
$$
u_{a,b,c}(1)=0 \,\, , \quad u_{a,b,c}(k)=0 \,\, , \quad u_{a,b,c}'(1)=4n \,\, ,
$$
we obtain three values $a(k),b(k),c(k)$ depending on $k$, that are
$$\begin{aligned}
a(k) &\= -\frac{2k^6(6(n-1)\log(k)k^2+3(k^2-1))}{(8\log(k)-1)k^8+1} \,\, , \\
b(k) &\= \frac{32((n+3)k^8-4k^6-n+1)}{(8\log(k)-1)k^8+1} \,\, , \\
c(k) &\= \frac{32((n+3)k^8-4k^6-n+1)}{(8\log(k)-1)k^8+1} \,\, .
\end{aligned}$$
Set $u_k \= u_{a(k),b(k),c(k)}$ and observe that
$$
u_k'(k)+\tfrac{4n}{k}=\tfrac{\a(k)}{k(8\log(k)-1)k^8+1)} \,\, ,
$$
with
\begin{multline*}
\a(k) \= -4(n+3)k^{10} +32(n+1)k^8\log(k) -4(n-3)k^8 \\
+32(n-1)k^2\log(k) +4(n+3)k^2 +4(n-3) \,\, .
\end{multline*}
Notice that
$$\begin{gathered}
\a(1)=\a'(1)=\a''(1)=0 \,\, , \quad \a'''(1)=512n>0 \,\, , \\
\lim_{k \to +\infty} \a(k) = -\infty
\end{gathered}$$
and so there exists $\tilde{k}>1$ such that $\a(k)>0$ for any $1<k<\tilde{k}$ and $\a(\tilde{k})=0$. Then, we set $u \= u_{\tilde{k}}$, so that the function
$$
\f: [1,\tilde{k}] \to [0,\pi] \,\, , \quad \f(r) \= \int_1^r\frac{\diff t}{\sqrt{u(t)}}
$$
is smooth, increasing and its inverse $\phi \= \f^{-1}$ solves the ODE \eqref{anseqscal} with the boundary conditions \eqref{icM4}.

Assume $m>3$. Then, by means of Equation \eqref{anseqscal'}, conditions \eqref{icM4} imply that the solution $u_{a,b,c}$ given in Formula \eqref{solm=3} verifies
$$
u_{a,b,c}(1)=0 \,\, , \quad u_{a,b,c}(k)=0 \,\, , \quad u_{a,b,c}'(1)=\tfrac{4mn}{p}  \,\, , \quad u_{a,b,c}'(k)=-\tfrac{4mn}{kp} \,\, .
$$
By imposing the first three conditions
$$
u_{a,b,c}(1)=0 \,\, , \quad u_{a,b,c}(k)=0 \,\, , \quad u_{a,b,c}'(1)=\tfrac{4mn}{p} \,\, ,
$$
we obtain three values $a(k),b(k),c(k)$ depending on $k$, that are
$$\begin{aligned}
a(k) &\= \frac{2\big(2(mn-p)k^{m-1}+(mp-2mn-p)k^2-p(m-3)\big)}{p\big(2(m+1)k^{m-1}-(3m-1)k^2+(m-3)k^{-2m}\big)} \,\, , \\
b(k) &\= \frac{4\big(-m(n+p)k^2+(mn-p)k^{-2m}+(m+1)p\big)}{p\big(2(m+1)k^{m-1}-(3m-1)k^2+(m-3)k^{-2m}\big)} \,\, , \\
c(k) &\= \frac{4\tilde{c}(k)}{p(2(m+1)k^{m-1}-(3m-1)k^2+(m-3)k^{-2m})} \,\, ,
\end{aligned}$$
with
\begin{multline*}
\tilde{c}(k) \= 2m(m+1)(m-3)(n+p)k^{m-1}-(3m-1)(m+1)(m-3)p \\
\phantom{\=\;\;} +(m^3(p-2n)+m^2(4n-3p) +m(6mn-p)+3p)k^{-2m} \,\, .
\end{multline*}
Set $u_k \= u_{a(k),b(k),c(k)}$ and observe that
$$
u_k'(k)+\tfrac{4mn}{kp}=\tfrac{\a(k)}{pk^2\b(k)} \,\, ,
$$
where $\a(k), \b(k)$ are the polynomials in $k$ defined by
$$\begin{aligned}
\a(k) &\= -4m(m-3)(n+p)k^{3m+2} +4(m+1)((m-1)p+2mn)k^{3m} -4(3m-1)(mn+p)k^{2m+3} \\
&\qquad\qquad\qquad\quad  +4(3m-1)(mn-p)k^m +4(m+1)((m-1)p-2mn)k^3 -4m(m-3)(p-n)k \,\, , \\
\b(k) &\= 2(m+1)k^{3m-1} -(3m-1)k^{2(m+1)} +(m-3) \,\, .
\end{aligned}$$
Notice that $\b(1)=0$ and 
$$
\b'(k)=2(m+1)(3m-1)(k^{m-3}-1)k^{2m+1} >0 \quad \text{ for any $k>1$} \,\, , 
$$
hence $\b(k)>0$ for any $k>1$. Moreover
$$\begin{gathered}
\a(1)=\a'(1)=\a''(1)=0 \,\, , \\
\a'''(1)=8mn(3m-1)(m-3)(m-1)(m+1)>0 \,\, , \\
\lim_{k \to +\infty} \a(k) = -\infty
\end{gathered}$$
and so there exists $\tilde{k}>1$ such that $\a(k)>0$ for any $1<k<\tilde{k}$ and $\a(\tilde{k})=0$. Then, we set $u \= u_{\tilde{k}}$, so that the function
$$
\f: [1,\tilde{k}] \to [0,\pi] \,\, , \quad \f(r) \= \int_1^r\frac{\diff t}{\sqrt{u(t)}}
$$
is smooth, increasing and its inverse $\phi \= \f^{-1}$ solves the ODE \eqref{anseqscal} with the conditions \eqref{icM4}.

Finally, by means of an argument similar to the one used in the proof of Theorem \ref{thm:chcsc-M2} (see Equation \eqref{pignolo}), it holds necessarily that $c>0$ in all the cases. This concludes the proof. \end{proof}

\appendix

\section{} \label{A}

In this appendix, we provide the details for the proof of Propotision \ref{prop:chern-ricci} computing the Chern-Ricci tensors of B\'erard-Bergery manifolds $(M_{(i,n)}(\fG,\fK),\bJ,\bg)$, of Proposition \ref{propLee} computing their torsion and the Lee form, and of Equation \eqref{ddcw} concerning the pluriclosed condition $\diff\diff^{\,c}\!\w=0$.

\subsection{Proof of Proposition \ref{prop:chern-ricci}} \hfill \par

Let $X,Y \in \gp$ be such that $Q(X,X)=Q(Y,Y)=1$ and $(e_{\a},Je_{\a})$ a $(Q_{\gp},J)$-unitary basis for $\gp$. Set $F \= \tfrac{2mn}{\l p} f$. Then, by using Formulas \eqref{KosChern}, \eqref{defN}, \eqref{JNT}, \eqref{brack*}, \eqref{gJ*}, \eqref{V*J}, \eqref{Jad(T)}, \eqref{lambda}, \eqref{eq:metric}, Proposition \ref{propCh} and Corollary \ref{corLNJ}, we get
\begin{align*}
\bg(&R^{\rm Ch}(\bg)(N,T^*)N,T^*)_{\g(r)} \\
&=\bg(\n_{[N,T^*]}N,T^*)_{\g(r)} -\bg(\n_{N}\n_{T^*}N,T^*)_{\g(r)} +\bg(\n_{T^*}\n_{N}N,T^*)_{\g(r)} \\
&=-\eL_{N}\big(\bg(\n_{T^*}N,T^*)\big)_{\g(r)} +\bg(\n_{T^*}N,\n_{N}T^*)_{\g(r)} +\eL_{T^*}\big(\bg(\n_{N}N,T^*)\big)_{\g(r)} -\bg(\n_{N}N,\n_{T^*}T^*)_{\g(r)} \\
&= -F(r) \tfrac{\p}{\p r}(F'(r)F(r)^2) +2F'(r)^2F(r)^2 \\
&= -F(r)^3F''(r) \,\, , \\[8pt]
\bg(&R^{\rm Ch}(\bg)(X^*,(JX)^*)N,T^*)_{\g(r)} \\
&=-\bg(\n_{[X,JX]^*}N,T^*)_{\g(r)} -\bg(\n_{X^*}\n_{(JX)^*}N,T^*)_{\g(r)} +\bg(\n_{(JX)^*}\n_{X^*}N,T^*)_{\g(r)} \\
&=-\bg(\n_{[X,JX]^*}N,T^*)_{\g(r)} -\eL_{X^*}\big(\bg(\n_{(JX)^*}N,T^*)\big)_{\g(r)} +\bg(\n_{(JX)^*}N,\n_{X^*}T^*)_{\g(r)} \\
&\phantom{=,} +\eL_{(JX)^*}\big(\bg(\n_{X^*}N,T^*)\big)_{\g(r)} -\bg(\n_{X^*}N,\n_{(JX)^*}T^*)_{\g(r)} \\
&=-\l F(r)^2F'(r) +\tfrac12\l^2F(r)^2 +\l F(r)^2F'(r) +\tfrac12\l^2F(r)^2 +\l F(r)^2F'(r) -\tfrac14\l^2F(r)^2\big(2-\tfrac{F(r)^2}{h(r)^2}\big) \\
&\phantom{=,} -\tfrac14\l^2F(r)^2\big(2-\tfrac{F(r)^2}{h(r)^2}\big) \\
&= \l F(r)^2F'(r) +\tfrac12\l^2\tfrac{F(r)^4}{h(r)^2} \,\, , \\[8pt]
\bg(&R^{\rm Ch}(\bg)(N,T^*)X^*,(JX)^*)_{\g(r)} \\
&=\bg(\n_{[N,T^*]}X^*,(JX)^*)_{\g(r)} -\bg(\n_{N}\n_{T^*}X^*,(JX)^*)_{\g(r)} +\bg(\n_{T^*}\n_{N}X^*,(JX)^*)_{\g(r)} \\
&=-\eL_{N}\big(\bg(\n_{T^*}X^*,(JX)^*)\big)_{\g(r)} +\bg(\n_{T^*}X^*,\n_{N}(JX)^*)_{\g(r)} +\eL_{T^*}\big(\bg(\n_{N}X^*,(JX)^*)\big)_{\g(r)} \\
&\phantom{=,} -\bg(\n_{N}X^*,\n_{T^*}(JX)^*)_{\g(r)} \\
&= -F(r)\tfrac{\p}{\p r}(F(r)h(r)h'(r)) +2F(r)^2h'(r)^2 \\
&= -F(r)^2h(r)^2\Big(\tfrac{h''(r)}{h(r)} -\tfrac{h'(r)^2}{h(r)^2} +\tfrac{F'(r)}{F(r)}\tfrac{h'(r)}{h(r)} \Big) \,\, .
\end{align*}

Hence, we obtain
\begin{align*}
\Ric^{\rm Ch[1]}(\bg)&(N,N)_{\g(r)} \\
&= F(r)^{-2}g\big(R^{\rm Ch}(\bg)(N,T^*)N,T^*\big)_{\g(r)} + h(r)^{-2}\sum_{e_{\a} \in \gp}g\big(R^{\rm Ch}(\bg)(N,T^*)e_{\a}^*,(Je_{\a})^*\big)_{\g(r)} \\
&=-F(r)F''(r) -(m{-}1)F(r)^2\Big(\tfrac{h''(r)}{h(r)} -\tfrac{h'(r)^2}{h(r)^2} +\tfrac{F'(r)}{F(r)}\tfrac{h'(r)}{h(r)} \Big) \\
&=F(r)^2\Big(-\tfrac{F''(r)}{F(r)} -(m{-}1)\tfrac{h''(r)}{h(r)} +(m{-}1)\tfrac{h'(r)}{h(r)}\Big(\tfrac{h'(r)}{h(r)} -\tfrac{F'(r)}{F(r)}\Big)\Big) \\
&= \tfrac{2m(m-1)n^2}{p^2}f(r)^2\Big(-\tfrac{f''(r)}{f(r)} +(m{-}1)\Big(-\tfrac{h''(r)}{h(r)} +\tfrac{h'(r)^2}{h(r)^2} -\tfrac{f'(r)}{f(r)}\tfrac{h'(r)}{h(r)}\Big)\Big)
\end{align*}
and
\begin{align*}
\Ric^{\rm Ch[2]}(\bg)&(N,N)_{\g(r)} \\
&= F(r)^{-2}g\big(R^{\rm Ch}(\bg)(N,T^*)N,T^*\big)_{\g(r)} +h(r)^{-2}\sum_{e_{\a} \in \gp}g\big(R^{\rm Ch}(\bg)(e_{\a}^*,(Je_{\a})^*)N,T^*\big)_{\g(r)} \\
&=-F(r)F''(r) +(m{-}1)h(r)^{-2}\big(\l F(r)^2F'(r) +\tfrac12\l^2\tfrac{F(r)^4}{h(r)^2}\big) \\
&=F(r)^2\Big(-\tfrac{F''(r)}{F(r)} +(m{-}1)\l\tfrac{F'(r)}{h(r)^2} +\tfrac12(m{-}1)\l^2 \tfrac{F(r)^2}{h(r)^4}\Big) \\
&= \tfrac{2m(m-1)n^2}{p^2}f(r)^2 \Big(-\tfrac{f''(r)}{f(r)} +\tfrac{2m(m-1)n}{p}\tfrac{f'(r)}{h(r)^2} +\tfrac{2m^2(m-1)n^2}{p^2}\tfrac{f(r)^2}{h(r)^4}\Big) \,\, .
\end{align*}
By the symmetries of the tensors $\Ric^{\rm Ch[i]}(\bg)$, it holds that
$$\Ric^{\rm Ch[i]}(\bg)(T^*,T^*)_{\g(r)} =\Ric^{\rm Ch[i]}(\bg)(N,N)_{\g(r)} \quad \text{ and } \quad \Ric^{\rm Ch[i]}(\bg)(N,T^*)_{\g(r)}=0 \,\, .$$
Moreover, by a direct computation, we get
\begin{align*}
&\bg(R^{\rm Ch}(\bg)(N,(JX)^*)N,T^*)_{\g(r)} = \bg(R^{\rm Ch}(\bg)(N,T^*)N,(JX)^*)_{\g(r)} = 0 \,\, , \\
&\bg(R^{\rm Ch}(\bg)(N,(JX)^*)Y^*,(JY)^*)_{\g(r)} = \bg(R^{\rm Ch}(\bg)(Y^*,(JY)^*)N,(JX)^*)_{\g(r)} = 0
\end{align*}
and hence
$$\Ric^{\rm Ch[i]}(\bg)(N,X^*)_{\g(r)}= \Ric^{\rm Ch[i]}(\bg)(T,X^*)_{\g(r)} = 0 \,\, .$$
Finally, we have
\begin{align*}
\bg(&R^{\rm Ch}(\bg)(N,T^*)X^*,(JX)^*)_{\g(r)} \\
&=\bg(\n_{[N,T^*]}X^*,(JX)^*)_{\g(r)} -\bg(\n_{N}\n_{T^*}X^*,(JX)^*)_{\g(r)} +\bg(\n_{T^*}\n_{N}X^*,(JX)^*)_{\g(r)} \\
&=-\eL_{N}\big(\bg(\n_{T^*}X^*,(JX)^*)\big)_{\g(r)} +\bg(\n_{T^*}X^*,\n_{N}(JX)^*)_{\g(r)} +\eL_{T^*}\big(\bg(\n_{N}X^*,(JX)^*)\big)_{\g(r)} \\
& \phantom{=\;}  -\bg(\n_{N}X^*,\n_{T^*}(JX)^*)_{\g(r)} \\
&= -F(r)\tfrac{\p}{\p r}(F(r)h(r)h'(r)) +2F(r)^2h'(r)^2 \\
&= -F(r)^2h(r)^2\Big(\tfrac{h''(r)}{h(r)} -\tfrac{h'(r)^2}{h(r)^2} +\tfrac{F'(r)}{F(r)}\tfrac{h'(r)}{h(r)} \Big) \,\, , \\[8pt]
\bg(&R^{\rm Ch}(\bg)(X^*,(JX)^*)Y^*,(JY)^*)_{\g(r)} \\
&=-\bg(\n_{[X,JX]^*}Y^*,(JY)^*)_{\g(r)} -\bg(\n_{X^*}\n_{(JX)^*}Y^*,(JY)^*)_{\g(r)} +\bg(\n_{(JX)^*}\n_{X^*}Y^*,(JY)^*)_{\g(r)} \\
&=-\l\bg(\n_{T^*}Y^*,(JY)^*)_{\g(r)} -\eL_{X^*}\big(\bg(\n_{(JX)^*}Y^*,(JY)^*)\big)_{\g(r)} +\bg(\n_{(JX)^*}Y^*,\n_{X^*}(JY)^*)_{\g(r)} \\
& \phantom{=\;} +\eL_{(JX)^*}\big(\bg(\n_{X^*}Y^*,(JY)^*)\big)_{\g(r)} -\bg(\n_{X^*}Y^*,\n_{(JX)^*}(JY)^*)_{\g(r)} \\
&=-\l\bg(\n_{T^*}Y^*,(JY)^*)_{\g(r)} +2\bg(\n_{(JX)^*}Y^*,\n_{X^*}(JY)^*)_{\g(r)} -\eL_{X^*}\big(\bg(\n_{(JX)^*}Y^*,(JY)^*)\big)_{\g(r)}\\
&\phantom{=\;}  +\eL_{(JX)^*}\big(\bg(\n_{X^*}Y^*,(JY)^*)\big)_{\g(r)} \\
&= -\l F(r)h(r)h'(r) +\tfrac12h(r)^2\big(|[X,Y]|_Q^2 +|[X,JY]|_Q^2 +|[JX,Y]|_Q^2 +|[JX,JY]|_Q^2\big) \\
&\phantom{=\;}  +2\l F(r)h(r)h'(r) -\tfrac12F(r)^2\big(Q([T,X],Y)^2+Q([T,X],JY)^2\big) \\
&= h(r)^2 \Big(\l F(r)\tfrac{h'(r)}{h(r)} -\tfrac12\l^2\tfrac{F(r)^2}{h(r)^2}\big(Q(X,Y)^2+Q(JX,Y)^2\big)+ \\
&\phantom{=\;} +\tfrac12\big(|[X,Y]|_Q^2 +|[X,JY]|_Q^2 +|[JX,Y]|_Q^2 +|[JX,JY]|_Q^2\big)\Big) \,\, .
\end{align*} Notice that $$\sum_{e_{\a} \in \gp}\big(Q(X,e_{\a})^2+Q(X,Je_{\a})^2\big) = 1 \,\, .$$
Moreover, setting $(\tilde{e}_{\a})\=(e_{\a},Je_{\a})$, since $[\gp,\gp] \subset \gk$ and $|X|_Q=1$, by the Schur Lemma we obtain
$$
\sum_{e_{\a} \in \gp}\big(|[X,e_{\a}]|_Q^2 +|[X,Je_{\a}]|_Q^2 +|[JX,e_{\a}]|_Q^2 +|[JX,Je_{\a}]|_Q^2\big) = \tfrac1{m-1}\sum_{\tilde{e}_{\a},\tilde{e}_{\b} \in \gp}|[\tilde{e}_{a},\tilde{e}_{\b}]_{\gk}|_Q^2 = 4m \,\, .
$$
Hence, we get 
\begin{align*}
\Ric^{\rm Ch[1]}(\bg)&(X^*,X^*)_{\g(r)} \\
&= F(r)^{-2}g\big(R^{\rm Ch}(\bg)(X^*,(JX)^*)N,T^*\big)_{\g(r)} +h(r)^{-2}\sum_{e_{\a} \in \gp} \!g\big(R^{\rm Ch}(\bg)(X^*,(JX)^*)e_{\a}^*,(Je_{\a})^*\big)_{\g(r)} \\
&=\l F'(r) +\tfrac12\l^2\tfrac{F(r)^2}{h(r)^2} +(m{-}1)\l F(r)\tfrac{h'(r)}{h(r)} -\tfrac12\l^2\tfrac{F(r)^2}{h(r)^2} +2m \\
&=h(r)^2\Big(\l\tfrac{F(r)}{h(r)^2}\Big(\tfrac{F'(r)}{F(r)} +(m{-}1)\tfrac{h'(r)}{h(r)}\Big) +\tfrac{2m}{h(r)^2}\Big) \\
&=h(r)^2\Big(\tfrac{2mn}p\tfrac{f(r)}{h(r)^2}\Big(\tfrac{f'(r)}{f(r)} +(m{-}1)\tfrac{h'(r)}{h(r)}\Big) +\tfrac{2m}{h(r)^2}\Big)
\end{align*}
and
\begin{align*}
\Ric^{\rm Ch[2]}(\bg)&(X^*,X^*)_{\g(r)} \\
&= F(r)^{-2}g\big(R^{\rm Ch}(N,T^*)X^*,(JX)^*\big)_{\g(r)} +h(r)^{-2}\sum_{e_{\a} \in \gp} \!g\big(R^{\rm Ch}(e_{\a}^*,(Je_{\a})^*)X^*,(JX)^*\big)_{\g(r)} \\
&= -h(r)^2\Big(\tfrac{h''(r)}{h(r)} -\tfrac{h'(r)^2}{h(r)^2} +\tfrac{F'(r)}{F(r)}\tfrac{h'(r)}{h(r)} \Big) +(m{-}1)\l F(r)\tfrac{h'(r)}{h(r)} -\tfrac12\l^2\tfrac{F(r)^2}{h(r)^2} +2m \\
&=h(r)^2\Big(-\tfrac{h''(r)}{h(r)} +\tfrac{h'(r)^2}{h(r)^2} -\tfrac{F'(r)}{F(r)}\tfrac{h'(r)}{h(r)} +(m{-}1)\l F(r)\tfrac{h'(r)}{h(r)^3} -\tfrac12\l^2\tfrac{F(r)^2}{h(r)^4} +\tfrac{2m}{h(r)^2}\Big) \\
&= h(r)^2\Big(-\tfrac{h''(r)}{h(r)} +\tfrac{h'(r)^2}{h(r)^2} -\tfrac{f'(r)}{f(r)}\tfrac{h'(r)}{h(r)} +\tfrac{2m(m-1)n}{p}f(r)\tfrac{h'(r)}{h(r)^3} -2\big(\tfrac{mn}{p}\big)^2\tfrac{f(r)^2}{h(r)^4} +\tfrac{2m}{h(r)^2}\Big) \,\, ,
\end{align*}
which concludes the proof of Equations \eqref{ric1}, \eqref{ric2} and \eqref{ric12}.

For what concerns the scalar curvature, we have
\begin{align*}
\scal^{\rm Ch}(\bg)(r) &= 2 F(r)^{-2}\Ric^{\rm Ch[2]}(\bg)(T^*,T^*)_{\g(r)}+2h(r)^{-2}\sum_{e_{\a} \in \gp}\Ric^{\rm Ch[2]}(\bg)(e_{\a}^*,e_{\a}^*)_{\g(r)} \\
&= -2\tfrac{f''(r)}{f(r)} -2(m{-}1)\tfrac{h''(r)}{h(r)} +2(m{-}1)\Big(\tfrac{h'(r)}{h(r)} -\tfrac{f'(r)}{f(r)}\Big)\tfrac{h'(r)}{h(r)} +4m(m{-}1)\tfrac1{h(r)^2} \\
&\phantom{=\;} +\tfrac{4m(m-1)n}{p}\big(f'(r) +(m{-}1)f(r)\tfrac{h'(r)}{h(r)}\big)\tfrac1{h(r)^2} 
\end{align*}
which proves Equation \eqref{scal}. \qed

\subsection{Proof of Proposition \ref{propLee}} \hfill \par

By Equation \eqref{tau} and Corollary \ref{cordw}, the only non-vanishing components of $\t$ are
\begin{align*}
\bg(\t(X^*,T^*),Z^*)_{\g(r)} &= -\tfrac{2mn}{\l p}f(r)\big(h(r)h'(r) +\tfrac{mn}{p}f(r)\big)Q_{\gp}(JX,Z) \,\,, \\
\bg(\t(X^*,N),Z^*)_{\g(r)} &= -\tfrac{2mn}{\l p}f(r)\big(h(r)h'(r) +\tfrac{mn}{p}f(r)\big)Q_{\gp}(X,Z) \,\,,
\end{align*} and so $\t(N,T^*)_{\g(r)}=\t(X^*,Y^*)_{\g(r)}=0$.
Moreover, letting $(e_\a, Je_a)$ be a $(Q_\gp, J)$-unitary basis for $\gp$, we get 
\begin{align*}
\t(N,X^*)_{\g(r)} &= -\big(\tfrac{2mn}{\l p}f(r)\big)^{-2}\Big(\bg(\t(X^*,N),N)_{\g(r)}N_{\g(r)} +\bg(\t(X^*,N),T^*)_{\g(r)}T^*_{\g(r)}\Big) \\
&\phantom{=,} -h(r)^{-2}\sum_{e_{\a} \in \gp}\Big(\bg(\t(X^*,N),e_{\a}^*)_{\g(r)}(e_{\a}^*)_{\g(r)} +\bg(\t(X^*,N),(Je_{\a})^*)_{\g(r)}(Je_{\a})^*_{\g(r)}\Big) \\
&= \tfrac{2mn}{\l p}\tfrac{f(r)}{h(r)^2}\big(h(r)h'(r) +\tfrac{mn}p f(r)\big)X^*_{\g(r)} \,\, .
\end{align*} 
Therefore, by Equations \eqref{JNT} and \eqref{gJ*}, it follows that
$$
\t(T^*,X^*)_{\g(r)} = \tfrac{2mn}{\l p} \tfrac{f(r)}{h(r)^2}\big(h(r)h'(r) +\tfrac{mn}p f(r)\big)(JX)^*_{\g(r)} \,\, .
$$ Finally \begin{align*}
\q(N)_{\g(r)} &= \big(\tfrac{2mn}{\l p}f(r)\big)^{-2}\Big(\bg(\t(N,N),N)_{\g(r)} +\bg(\t(N,T^*),T^*)_{\g(r)}\Big) \\
&\phantom{=,} +h(r)^{-2}\sum_{e_{\a} \in \gp}\Big(\bg(\t(N,e_{\a}^*),e_{\a}^*)_{\g(r)} +\bg(\t(N,(Je_{\a})^*),(Je_{\a})^*)_{\g(r)} \Big) \\
&= \tfrac{4m(m-1)n}{\l p}\tfrac{f(r)}{h(r)^2}\big(h(r)h'(r) +\tfrac{mn}p f(r)\big) \,\, ,
\end{align*}
and analogously one can show that $\q(T^*)_{\g(r)} = \q(X^*)_{\g(r)} = 0$, which concludes the proof. \qed

\subsection{Proof of Equation \eqref{ddcw}} \hfill \par

Let us compute $\diff\diff^{\,c}\!\w$. Since $\bJ \w =\w$, it follows that $$\diff^{\,c}\!\w(A,B,C) = \diff \w(\bJ A,\bJ B,\bJ C)$$ and so \begin{align*}
\diff\diff^{\,c}&\!\w(A,B,C,D) = \\
&= \eL_A\diff\w(\bJ B,\bJ C,\bJ D) -\eL_B\diff\w(\bJ A,\bJ C,\bJ D) +\eL_C\diff\w(\bJ A,\bJ B,\bJ D) -\eL_D\diff\w(\bJ A,\bJ B,\bJ C) \\
&\phantom{=,} -\diff\w(\bJ[A,B],\bJ C,\bJ D) +\diff\w(\bJ[A,C],\bJ B,\bJ D) -\diff\w(\bJ[A,D],\bJ B,\bJ C) \\
&\phantom{=,} -\diff\w(\bJ[B,C],\bJ A,\bJ D) +\diff\w(\bJ[B,D],\bJ A,\bJ C) -\diff\w(\bJ[C,D],\bJ A,\bJ B) \,\, .
\end{align*}
If $A$ is a holomorphic Killing vector field, then $\eL_A\diff\w = \diff\eL_A\w=0$ and so
$$
\eL_A\diff\w(\bJ B,\bJ C,\bJ D) = \diff\w(\bJ [A,B],\bJ C,\bJ D) +\diff\w(\bJ B,\bJ [A,C],\bJ D) +\diff\w(\bJ B,\bJ C,\bJ [A,D]) \,\, .
$$
Therefore, if $A,B,C,D$ are holomorphic Killing, we get 
\begin{align*}
\diff\diff^{\,c}\!\w(A,B,C,D) &= +\diff\w(\bJ [A,B],\bJ C,\bJ D) -\diff\w(\bJ [A,C],\bJ B,\bJ D) +\diff\w(\bJ [A,D],\bJ B,\bJ C) \\
&\phantom{=,} +\diff\w(\bJ [B,C],\bJ A,\bJ D) -\diff\w(\bJ [B,D],\bJ A,\bJ C) +\diff\w(\bJ [C,D],\bJ A,\bJ B) \,\, .
\end{align*}
Letting $\rho(X,Y) = Q_{\gp}(JX,Y)$, we have
\begin{align*}
(\rho{\wedge}\rho)(X,Y,Z,W) =& 2\big(Q(JX,Y)Q(JZ,W) -Q(JX,Z)Q(JY,W) +Q(JX,W)Q(JY,Z)\big)
\end{align*}
and so, by using Proposition \ref{propdw} and Equations \eqref{Jad(T)}, \eqref{eq:metric} we get
\begin{align*}
\diff&\diff^{\,c}\!\w(X^*,Y^*,Z^*,W^*)_{\g(r)} = \\
&= \l Q(JX,Y)\diff\w(Z^*,W^*,N)_{\g(r)} -\l Q(JX,Z)\diff\w(Y^*,W^*,N)_{\g(r)} +\l Q(JX,W)\diff\w(Y^*,Z^*,N)_{\g(r)} \\
& \phantom{=,} +\l Q(JY,Z)\diff\w(X^*,W^*,N)_{\g(r)} -\l Q(JY,W)\diff\w(X^*,Z^*,N)_{\g(r)} +\l Q(JZ,W)\diff\w(X^*Y^*,N)_{\g(r)} \\
&=4\tfrac{mn}{p}f(r) (h(r)h'(r)+ \tfrac{mn}{p}f(r))(\rho{\wedge}\rho)(X,Y,Z,W),
\end{align*}
which concludes the proof. \qed

\end{document}